\newcommand{\Rmnum}[1]{\expandafter\@slowromancap\romannumeral #1@}
\journal{}
\newtheorem{remark}{Remark}[section]
\newtheorem{prop}{Proposition}[section]
\newtheorem{theorem}{Theorem}[section]
\newtheorem{definition}{Definition}[section]
\newtheorem{lemma}{Lemma}[section]
\newtheorem{hypo}{Hypothesis}[section]
\DeclareMathOperator{\esssup}{esssup}
\DeclareMathOperator{\essinf}{essinf}
\DeclareMathOperator{\Tr}{Tr}
\renewcommand\subsection{\@startsection {subsection}{2}{\z@}%
                {3.5ex \@plus -1ex \@minus -.2ex}%
                {1.5ex \@plus.2ex\noindent}%
                {\bfseries\itshape}%
                }
\begin{document}
 %���屳��ɫ(��ɳɫ)
    %\definecolor{backgroundcolor}{RGB}{199, 238, 206}
    %ʹ�ñ���ɫ
    %\pagecolor{backgroundcolor}

%Fourier-Utopia
\let\normalint\int % PS
\def\int{\displaystyle\normalint}
\begin{frontmatter}

\title{{\bf Recursive stochastic differential games with non-Lipschitzian generators and viscosity solutions of Hamilton-Jacobi-Bellman-Isaacs equations}\tnoteref{mytitlenote}}
\tnotetext[mytitlenote]{2020  {\it Mathematics Subject Classification.}
Primary: 49N70; Secondary: 49L20.\\
\hspace*{1.8em}{\it Key words and phrases.} Backward stochastic differential equation; dynamic programming principle; Hamilton-Jacobi-Bellman-Isaacs equation; stochastic differential game; viscosity solution.\\
 %\cortext[c]{Corresponding author: Zhuangzhuang Xing}
\hspace*{1.8em}The first edition of this paper is completed in  October, 2021. Then it has been revised in October, 2023. }

\author{Guangchen Wang}%\ead{wguangchen@sdu.edu.cn}

\author{Zhuangzhuang Xing}%\corref{c}}
%% or include affiliations in footnotes:
%\ead{xzzxns@163.com}
\address{School of Control Science and Engineering, Shandong University, Jinan 250061, PR China}
%\author{Guangchen Wang}\ead{wguangchen@sdu.edu.cn}

%\author{Zhuangzhuang Xing\corref{c}}
%% or include affiliations in footnotes:
%\ead{xzzxns@163.com}\cortext[c]{Corresponding author}
%\address{School of Control Science and Engineering, Shandong University, Jinan 250061, PR China}

%% or include affiliations in footnotes:

\begin{abstract}
This investigation is dedicated to  a  two-player zero-sum  stochastic differential game (SDG), where a cost function is characterized by a backward stochastic differential equation (BSDE) with a continuous and monotonic generator regarding the first unknown variable, which possesses  immense applicability in financial engineering.  A verification theorem by virtue of classical solution of derived Hamilton-Jacobi-Bellman-Isaacs (HJBI) equation  is given.
The dynamic programming principle (DPP) and  unique weak (viscosity) solvability of HJBI equation   are formulated through  comparison theorem for BSDEs with monotonic generators  and  stability of viscosity solution. Some new regularity properties of value function are presented.
Finally, we propose three concrete examples, which are concerned with resp., classical, and viscosity solution of HJBI equation, as  well as  a financial application  where an investor with a non-Lipschitzian Epstein-Zin utility  deals with market friction to maximize her utility preference.
\end{abstract}
\end{frontmatter}

%All acknowledgements should be placed in the back of the paper after
\section{Introduction}\label{sec1}
The initiation of cogitating on two-player zero-sum differential games  in  perspective of  DPP framework was  by  Evans and Souganidis \cite{Evans1984}. Fleming and  Souganidis \cite{Fleming1989} pioneered to  expand the former results of \cite{Evans1984} from deterministic to stochastic cases. Since there have been copious results following  their approaches, referred to \cite{Tang1993,Kushner2002,Zhang2011,Lv2020}, as well as Fleming and Soner \cite{Fleming2006} for the applications regarding robust, $H_\infty$ and risk sensitive controls. Buckdahn and Li \cite{Buckdahn2008} extended \cite{Fleming1989} to the BSDE framework. The literature interested in  their techniques can be referred to \cite{Buckdahn2011,Lin2012,Wei2018,Lin2013,Ji2013,Li2021} and their references therein.

Meanwhile, the studies of BSDEs have received sharply raising  attention in light of their pivotal roles in financial engineering, risk management and control theories, see Duffie and Epstein \cite{Duffie1992},  El Karoui et al. \cite{Karoui1997},  Wu and Yu \cite{Wu2008}, etc.  Pardoux and Peng \cite{Pardoux1990} formulated  the unique solvability of nonlinear BSDEs in Lipschitzian condition. Nonetheless the Lipschitzian condition is too strict to satisfy in reality.  For instance, the recursive utility introduced in \cite{Duffie1992} defined by a BSDE with  conditional expectation:
\begin{equation}\label{digui}
u_r=E[\int_r^Tg(c_t,u_t)dt|\mathcal{F}_r],
\end{equation}
   in which $c$ represents consumption of an investor. When considering the celebrated Epstein-Zin preference, i.e., $g=\frac{\rho}{1-\frac{1}{\varsigma}}(1-\vartheta)u[(\frac{c}{((1-\vartheta)u)^{\frac{1}{1-\vartheta}}})^{1-\frac{1}{\varsigma}}-1]$, in which $\rho>0$ represents the rate of time preference, $0<\vartheta\neq 1$  the ratio of relative risk aversion and $0<\varsigma\neq 1$ the elasticity of intertemporal substitution, it is likely that $g$ is not Lipschitzian regarding $c$ and $u$ but monotonic regarding $u$ in some circumstances, see Kraft et al. \cite{Kraft2013}.  Additionally, the investor has to overcome  market friction, such as transaction costs, competition, etc., which can be seen as an opposer, to gain profits. In this circumstance the DPPs of recursive control problems in \cite{Wu2008} and of recursive SDGs in \cite{Buckdahn2008} fail to apply, which motivates us to investigate  recursive SDGs with non-Lipschitzian generators.  Some preliminary works concerning relaxing the Lipschitzian condition of BSDE are referred to \cite{Peng1991,Lepeltier1997,Peng1999a,Pardoux1999,Briand2000,Briand2003,Chas2016} and their references therein.  For the study of DPP for stochastic recursive control problem with non-Lipschitzian generator, see \cite{Pu2018,Zhuo2020}.

   In this paper, provided with monotonicity  and polynomial growth  of the generator regarding  part $Y$ of  BSDE, we derive the DPP of the recursive SDG. Since the control processes contain information before the initial time, which differs from the classical stochastic control problems, we adopt the Girsanov transformation approach regarding the directions of the Cameron-Martin space as in \cite{Buckdahn2008}.  A verification theorem by means of classical solution of HJBI equation is also given. Later  based on the DPP, we prove that the lower and upper value functions are respectively the unique  weak solutions  of the related HJBI equations in viscosity sense  by virtue of mollification and truncation techniques (cf. \cite{Pardoux1999}) as well as stability property and comparison theorem of viscosity solution, which generalizes the results in \cite{Wu2008,Buckdahn2008,Pu2018,Zhuo2020} and hence is more applicable in financial markets with non-Lipschitzian preferences  and market friction.
   Moreover, imposing some additional hypotheses and employing  time change of Brownian motion (cf. \cite{Buckdahn2011b}), we improve the classical regularity results of $\frac{1}{2}$-H\"{o}lder continuity on $t$ of value function to Lipschitzian continuity on any subset of $[0,T]$ excluding $T$.

   The main difficulty lies in the existence of nonanticipative strategies in our model, which makes our game problem  not just a  simple generalization from single-control to control-versus-control case, but to a nontrivial control-versus-strategy case. Therefore, the characterization of the optimal control-strategy pair, where control and strategy twine with each other inseparably,  is much harder for us to obtain than the control case in \cite{Pu2018,Zhuo2020}. This  makes two value functions  not  comparable via methods in \cite{Pu2018,Zhuo2020} to prove uniform convergence for a sequence of constructed value functions in Lemma \ref{zhiyizhi} . Utilizing the results in \cite{Buckdahn2008}, we find suitable control-strategy pairs to make two value functions $W$ and $W_m$ comparable in order to obtain the uniformly convergence of value functions in Lemma \ref{zhiyizhi}.

  More importantly, besides the results regarding DPP and viscosity solution, we also obtain two meaningful results. One is   the Lipschitzian continuity (possibly not uniformly in $x$) of value function regarding time variable $t$  on $[0,T-\delta](\delta\in(0,T))$ where we  remove the uniformly bounded requirement of coefficients in \cite{Buckdahn2011b}, which generalizes classical results of $\frac{1}{2}$-H\"{o}lder continuity of  value function regarding $t$. Since the value function involves nonanticipative strategies, which is essentially different from \cite{Buckdahn2011b}, the methods in \cite{Buckdahn2011b} fail to apply. By designing a new nonanticipative strategy $\tilde{\beta}^1$ and through some delicate analysis, we overcome this difficulty, which is nontrivial. This can be useful when considering further numerical computation of the value function or solution of corresponding dynamic-programming equations. The other is  a verification theorem  via  classical solution of  HJBI equation, by virtue of  comparison theorem of BSDEs under monotonic condition, which is applicable for seeking of optimal control-strategy pairs.

This study is organized in this fashion: In Section \ref{sec2}, some notations as well as necessary preliminaries of SDGs are introduced.
In Section \ref{sec3}, the DPP of SDGs with non-Lipschitzian generators and a verification theorem are established in Subsection \ref{sub1} and when the generator does not depend on part $Z$ of BSDE, the lower and upper value functions are proved to be the unique viscosity solutions of the associated HJBI equations in Subsection \ref{sub2}. In Section \ref{sec4},  improved regularity property of value function is obtained. Finally in Section \ref{sec5},  three concrete examples concerned with classical and viscosity solution of HJBI equation as well as financial application are illustrated.

\section{Preliminaries}\label{sec2}
We study  SDGs in the Wiener space $(\Omega, \mathcal{F},P)$, where $\Omega=C_0([0,T];\mathbb{R}^d)$, the space of all continuous functions on $[0,T]$ starting from 0, $\mathcal{F}$ is the complete Borel $\sigma$-algebra regarding the Wiener measure $P$, and $B$ is the coordinate process acting as the Brownian motion: $B_s(\omega)=\omega_s$. Let $\mathcal{N}$ be the $P$-null set in $\mathcal{F}$ and $\mathcal{F}_s:=\sigma\{B_t,t\le s\}\vee\mathcal{N}$.

We then introduce some notations:

For $l\ge1$, and $k,d\in\mathbb{N}^+$,\vspace{1ex}

\noindent$\bullet$\; $L^{2l}(\Omega,\mathcal{F}_T,P;\mathbb{R}^k)$ comprises  $\mathcal{F}_T$-measurable r.v. $\chi\in\mathbb{R}^k$ s.t. $E[|\chi|^{2l}]<+\infty,$\vspace{1ex}

\noindent$\bullet$\; $\mathcal{H}^{2l}_{ d}([0,T])$ comprises  $\{\mathcal{F}_r\}_{0\le r\le T}$-adapted processes $\lambda\in\mathbb{R}^{ d}$ s.t. $E[\normalint_0^T|\lambda_r|^{2l}dr]<+\infty,$\vspace{1ex}

\noindent$\bullet$\; $\mathcal{S}_k^{2l}([0,T])$ comprises  continuous $\{\mathcal{F}_r\}_{0\le r\le T}$-adapted processes $Y\in\mathbb{R}^k$ s.t. $E[\sup\limits_{0\le r\le T}|Y_r|^{2l}]<+\infty$, and $\mathcal{S}^{2l}([0,T]):=\mathcal{S}_k^{2l}([0,T])$, \vspace{1ex}

\noindent$\bullet$\; $\mathbb{S}^k$ comprises   $k\times k$ symmetric matrices,\vspace{1ex}

\noindent$\bullet$\; $C^3_{b}([0,T]\times\mathbb{R}^n)$ comprises $C^3$-functions on $[0,T]\times\mathbb{R}^n$ whose partial
derivatives of order no more than 3 are bounded,\vspace{1ex}

\noindent$\bullet$\; $\mathbb{U},\mathbb{V}$ are compact metric spaces representing control domains,\vspace{1ex}

\noindent$\bullet$\;
$\mathbb{H}:=[0,T]\times\mathbb{R}^n\times\mathbb{R}\times\mathbb{R}^n\times\mathbb{S}^n$, and $\mathbb{G}:=[0,T]\times\mathbb{R}^n\times\mathbb{R}\times\mathbb{R}^d\times\mathbb{U}\times\mathbb{V}$,\vspace{1ex}

\noindent$\bullet$\; $\Xi:=(t,x)$ and $\Xi':=(t,x')$.\vspace{1ex}

For $0\le t<T$,  $\mathcal{U}_{t,T}$ (resp. $\mathcal{V}_{t,T}$) represents all feasible controls for player \Rmnum{1} (resp. \Rmnum{2}) comprising  $\mathbb{U}$ (resp. $\mathbb{V}$)-valued, $\{\mathcal{F}_r\}_{t\le r\le T}$-adapted processes on $[t,T]$. If $u,\overline{u}\in\mathcal{U}_{t,s}$ satisfy $P\{u=\overline{u}, \; a.e., [t,s]\}=1$, then we call $u$ and $\overline{u}$ identical, and denote them as $u\equiv\overline{u}$ on $[t,s]$. Similarly, we denote the identical relationship in $\mathcal{V}_{t,s}$ as $v\equiv\overline{v}$ on $[t,s]$.

For $u\in\mathcal{U}_{t,T}$, $v\in\mathcal{V}_{t,T}$ and  $x\in\mathbb{R}^n$, consider the following stochastic differential equation (SDE) as the state equation
\begin{equation}\label{sdg}
\left\{\begin{array}{ll}
  dx_{s}^{\Xi;u,v}=b(s,x_{s}^{\Xi;u,v},u_s,{{v}_{s}})ds+\sigma (s,x_{s}^{\Xi;u,v},u_s,{{v}_{s}})d{{B}_{s}},\quad s\in [t,T],  \\\\
   x_{t}^{\Xi;u,v}=x.  \\
\end{array} \right.
\end{equation}
Suppose  the coefficients of  SDE \eqref{sdg} fulfill the  hypothesis as follows (all  the Lipschitzian constants of functions in hypotheses of this  paper are denoted by $C>0$):
\begin{hypo}\label{sdexishu}
  \rm{
        $b:[0,T]\times\mathbb{R}^n\times\mathbb{U}\times\mathbb{V}\rightarrow\mathbb{R}^n$ and $\sigma:[0,T]\times\mathbb{R}^n\times\mathbb{U}\times\mathbb{V}\rightarrow\mathbb{R}^{n\times d}$ are continuous regarding $(t,u,v)$ uniformly in $x$, and Lipschitzian regarding $x$ uniformly in $(t,u,v)$.
  }
\end{hypo}
From  Hypothesis \ref{sdexishu},  the linear growth property of $b$ and $\sigma$ regarding $x$, uniformly in $(t,u,v)$ is verified. Hence due to some classical results of SDEs, the unique solvability of SDE \eqref{sdg} in $S^{2l}_n([0,T])$, $\forall l\ge1$, is assured.

Define the cost function
$$J(\Xi;u,v):=Y^{\Xi;u,v}_t,$$
where
\begin{equation}\label{controlled}	Y_{s}^{\Xi;u,v}=h(x_{T}^{\Xi;u,v})+\int_{s}^{T}g(r,x_{r}^{\Xi;u,v},Y_{r}^{\Xi;u,v},Z_{r}^{\Xi;u,v},u_r,v_r)dr-\int_{s}^{T}Z_r^{\Xi;u,v}dB_{r},\quad t\le s\le T.
\end{equation}
Suppose the generator $g:\mathbb{G}\rightarrow\mathbb{R}$ and terminal function $h:\mathbb{R}^n\rightarrow\mathbb{R}$ satisfy the following hypothesis:
\begin{hypo}\label{bsdexishu}
{\rm \quad\begin{enumerate}[(i)]
\item $g$ is continuous regarding  $(t,y,u,v)$ uniformly in $(x,z)$, and Lipschitzian regarding $(x,z)$ uniformly in $(t,y,u,v)$,
\item $h$ is Lipschitzian regarding $x$,
\item there exists a constant $\theta$ s.t. the following relationship holds uniformly in $(\Xi,z,u,v)$:
    $$\forall y,\widetilde{y}
    \in\mathbb{R},\quad (y-\widetilde{y})\cdot\big[g(\cdot,y,\cdot,\cdot,\cdot)-g(\cdot,\widetilde{y},\cdot,\cdot,\cdot)\big]\le\theta|y-\widetilde{y}|^2,$$
\item for a given $p\ge 1$, there exists a positive constant $\eta$ s.t. the following relationship holds uniformly in $(\Xi,y,z,u,v)$:
    $$|g(\cdot,y,\cdot,\cdot,\cdot)-g(\cdot,0,\cdot,\cdot,\cdot)|\le \eta(1+|y|^p).$$
 \end{enumerate}}
   \end{hypo}
   An example of $g$ satisfying Hypothesis \ref{bsdexishu} is the Epstein-Zin preference function presented in Section \ref{sec1}, which will be elaborated in Section \ref{sec5}.
Then   under Hypothesis \ref{bsdexishu}, the unique solvability of BSDE \eqref{controlled} in $\mathcal{S}^{2l}([0,T])\times\mathcal{H}^{2}_{d}([0,T])$, $\forall l\ge1$ is assured (cf. \cite{Pardoux1999}). In fact, we have  estimates as follows:
\begin{prop}\label{yilai}
    Suppose Hypotheses \ref{sdexishu} and \ref{bsdexishu} apply. Then for any $l\ge1$, $u\in\mathcal{U}_{t,T}$, $v\in\mathcal{V}_{t,T}$ and $\xi\in L^{2l}(\Omega, \mathcal{F}_t,P;\mathbb{R}^n)$,  BSDE \eqref{controlled} with $\xi$ as the initial value of SDE \eqref{sdg} admits a unique solution $(Y^{t,\xi;u,v},Z^{t,\xi;u,v})\in\mathcal{S}^{2l}([0,T])\times\mathcal{H}^{2}_{d}([0,T])$. Furthermore, there exists a constant $K(C,\theta,\eta,l,T)>0$, s.t. for arbitrary $\xi,\xi'\in L^{2l}(\Omega,\mathcal{F}_t,P;\mathbb{R}^n)$, it holds that
  $$|Y^{t,\xi;u,v}_t|^{2l}\le K(1+|\xi|^{2l}+E[\normalint_t^T|g(s,0,0,0,u_s,v_s)|^{2l}ds|\mathcal{F}_t]),$$
  $$|Y^{t,\xi;u,v}_t-Y^{t,\xi';u,v}_t|\le K|\xi-\xi'|,$$
  and
  $$E[\sup\limits_{t\le r\le T}|Y^{t,\xi;u,v}_r|^2+\int_t^T|Y^{t,\xi;u,v}_r|^{2l-2}|Z^{t,\xi;u,v}_r|dr]\le K(1+E[|\xi|^{2l}+\int_t^T|g(r,0,0,0,u_r,v_r)|^{2l}dr]).$$

\end{prop}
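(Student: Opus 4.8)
\noindent\emph{Proof plan.} The existence and uniqueness of $(Y^{t,\xi;u,v},Z^{t,\xi;u,v})\in\mathcal{S}^{2l}([0,T])\times\mathcal{H}^2_d([0,T])$ for every $l\ge1$ is not what needs to be shown here: it is supplied by the monotone-generator theory of \cite{Pardoux1999}, since Hypothesis \ref{bsdexishu}(i),(iii),(iv) gives exactly the continuity, monotonicity in $y$, polynomial growth in $y$ and Lipschitz continuity in $(x,z)$ required there, the map $(r,u,v)\mapsto g(r,0,0,0,u,v)$ is bounded on the compact set $[0,T]\times\mathbb{U}\times\mathbb{V}$ by continuity, and the forward SDE \eqref{sdg} started from $\xi\in L^{2l}(\Omega,\mathcal{F}_t,P;\mathbb{R}^n)$ is well posed with $E[\sup_{t\le r\le T}|x_r^{t,\xi;u,v}|^{2l}\,|\,\mathcal{F}_t]\le K(1+|\xi|^{2l})$, the standard consequence of the linear growth of $b,\sigma$ coming from Hypothesis \ref{sdexishu}. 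The plan is therefore to prove the three displayed inequalities by It\^{o}'s formula applied to $|Y^{t,\xi;u,v}|^{2l}$ and to $|Y^{t,\xi;u,v}-Y^{t,\xi';u,v}|^2$, using the monotonicity condition to control the $y$-dependence of $g$, Young's inequality to absorb the $Z$-terms into the nonnegative second-order It\^{o} term, and the moment bound above together with the Lipschitz continuity of $h$ (Hypothesis \ref{bsdexishu}(ii)) to turn everything into powers of $|\xi|$.

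For the bound on $|Y_t^{t,\xi;u,v}|^{2l}$ I would apply It\^{o} to $|Y_s^{t,\xi;u,v}|^{2l}$ on $[s,T]$. The second-order term is nonnegative and bounded below by $l|Y_s^{t,\xi;u,v}|^{2l-2}|Z_s^{t,\xi;u,v}|^2$; in the drift, splitting (superscripts suppressed) $Y_r\cdot g(r,x_r,Y_r,Z_r,u_r,v_r)$ as $Y_r\cdot[g(r,x_r,Y_r,Z_r,u_r,v_r)-g(r,x_r,0,Z_r,u_r,v_r)]+Y_r\cdot[g(r,x_r,0,Z_r,u_r,v_r)-g(r,0,0,0,u_r,v_r)]+Y_r\cdot g(r,0,0,0,u_r,v_r)$, bounding the first bracket by $\theta|Y_r|^2$ via Hypothesis \ref{bsdexishu}(iii), the second by $C|Y_r|(|x_r|+|Z_r|)$ via Hypothesis \ref{bsdexishu}(i), and applying Young's inequality, makes $2l|Y_r|^{2l-2}Y_r\cdot g$ minus the second-order term no larger than $C(|Y_r|^{2l}+|x_r|^{2l}+|g(r,0,0,0,u_r,v_r)|^{2l})$. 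Since $Z^{t,\xi;u,v}$ is only known to lie in $\mathcal{H}^2_d$, the martingale part is a priori just a local martingale, so I would localize by stopping times, take $E[\,\cdot\,|\,\mathcal{F}_s]$ and then $E[\,\cdot\,|\,\mathcal{F}_t]$, and let the stopping times increase to $T$ by conditional dominated convergence (legitimate because $Y^{t,\xi;u,v}\in\mathcal{S}^{2l}$), arriving at $E[|Y_s^{t,\xi;u,v}|^{2l}\,|\,\mathcal{F}_t]\le A+C\normalint_s^T E[|Y_r^{t,\xi;u,v}|^{2l}\,|\,\mathcal{F}_t]\,dr$ for every $s\in[t,T]$, where $A:=E[|h(x_T^{t,\xi;u,v})|^{2l}+\normalint_t^T(|x_r^{t,\xi;u,v}|^{2l}+|g(r,0,0,0,u_r,v_r)|^{2l})\,dr\,|\,\mathcal{F}_t]$. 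A backward Gronwall inequality then gives $\sup_{t\le s\le T}E[|Y_s^{t,\xi;u,v}|^{2l}\,|\,\mathcal{F}_t]\le e^{CT}A$; specializing to $s=t$ and inserting the moment bound for $x^{t,\xi;u,v}$ and the linear growth of $h$ yields the first estimate.

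The bound on $|Y_t^{t,\xi;u,v}-Y_t^{t,\xi';u,v}|$ runs along the same lines with $|\widehat Y_s|^2$ in place of $|Y_s|^{2l}$, where $\widehat Y:=Y^{t,\xi;u,v}-Y^{t,\xi';u,v}$ (and likewise $\widehat Z$, $\widehat x$) solves a BSDE whose two drivers, after inserting the intermediate argument $g(r,x_r^{t,\xi;u,v},Y_r^{t,\xi';u,v},Z_r^{t,\xi;u,v},u_r,v_r)$, differ by a quantity whose inner product with $\widehat Y_r$ is at most $\theta|\widehat Y_r|^2+C|\widehat Y_r|(|\widehat x_r|+|\widehat Z_r|)$ by Hypothesis \ref{bsdexishu}(iii),(i); It\^{o}, absorption of $|\widehat Z|^2$, localization, conditional expectation and backward Gronwall then give $|\widehat Y_t|^2\le K\,E[|\widehat x_T|^2+\normalint_t^T|\widehat x_r|^2\,dr\,|\,\mathcal{F}_t]$, and the standard stability estimate $E[\sup_{t\le r\le T}|\widehat x_r|^2\,|\,\mathcal{F}_t]\le K|\xi-\xi'|^2$ (Lipschitz continuity of $b,\sigma$ in $x$) finishes it. For the third estimate I would instead \emph{keep} the nonnegative second-order term in the It\^{o} expansion of $|Y^{t,\xi;u,v}|^{2l}$: after localization, taking expectations and invoking Fatou's lemma (which also establishes finiteness), this term produces $E[\normalint_t^T|Y_r^{t,\xi;u,v}|^{2l-2}|Z_r^{t,\xi;u,v}|^2\,dr]\le K(1+E[|\xi|^{2l}+\normalint_t^T|g(r,0,0,0,u_r,v_r)|^{2l}\,dr])$, the term $E[\normalint_t^T|Y_r^{t,\xi;u,v}|^{2l}\,dr]$ on the right being controlled via $\sup_{t\le s\le T}E[|Y_s^{t,\xi;u,v}|^{2l}]$ from the previous step; the stated integral with $|Z_r^{t,\xi;u,v}|$ in place of $|Z_r^{t,\xi;u,v}|^2$ is dominated by this after an elementary use of $2ab\le a^2+b^2$ and the same control on $\sup_{t\le s\le T}E[|Y_s^{t,\xi;u,v}|^{2l}]$. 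For $E[\sup_{t\le r\le T}|Y_r^{t,\xi;u,v}|^2]$ I would take the supremum over $s$ in the It\^{o} identity for $|Y_s^{t,\xi;u,v}|^2$, apply the Burkholder-Davis-Gundy inequality to the martingale term and absorb $\tfrac14E[\sup_{t\le r\le T}|Y_r^{t,\xi;u,v}|^2]$ into the left side, the remaining terms being controlled by $E[\normalint_t^T|Z_r^{t,\xi;u,v}|^2\,dr]$ (handled as above in the case $l=1$) and by the first estimate; adding the two contributions gives the claim.

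The hard part is the passage from the It\^{o} formula to (conditional) expectations. Because $Z^{t,\xi;u,v}$ is guaranteed only in $\mathcal{H}^2_d$ and not in $\mathcal{H}^{2l}_d$, the stochastic integrals $\normalint 2l|Y^{t,\xi;u,v}|^{2l-2}Y^{t,\xi;u,v}\cdot Z^{t,\xi;u,v}\,dB$ are only local martingales and the weighted energy integrals $\normalint|Y^{t,\xi;u,v}|^{2l-2}|Z^{t,\xi;u,v}|^2\,dr$ are not a priori known to be integrable, so the whole argument must be run through a stopping-time localization and the bounds recovered by conditional dominated convergence and Fatou's lemma. A second, subtler point is that the supremum-in-time estimate and the $Z$-estimate cannot be closed simultaneously, which is why $\sup_{t\le s\le T}E[|Y_s^{t,\xi;u,v}|^{2l}]$ must first be bounded by backward Gronwall and only then fed into the Burkholder-Davis-Gundy step.
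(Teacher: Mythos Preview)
Your proposal is correct and matches the paper's approach exactly: the paper simply states that the proof is standard, replacing the Lipschitz condition by the monotonicity condition (Hypothesis~\ref{bsdexishu}(iii)) when applying It\^{o}'s formula, and omits the details. Your plan carries this out precisely---It\^{o} on $|Y|^{2l}$ and on $|\widehat Y|^2$, the decomposition of the drift via (iii) and (i), absorption of the $Z$-terms, localization, and backward Gronwall---and your care with the localization (since $Z$ is only in $\mathcal{H}^2_d$) and with the two-stage closure of the sup/energy estimates is exactly what the omitted standard argument requires.
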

The proof is standard, where  monotonic condition (iii) in Hypothesis \ref{bsdexishu} is employed instead of  Lipschitzian condition when applying It\^{o}'s formula, so we omit it.
Notice that since $u_r$, $v_r$ are $\{\mathcal{F}_r\}_{t\le r\le T}$-adapted, i.e., containing the information before the initial time $t$, $J(\Xi;u,v)$ is not necessarily deterministic  but $\mathcal{F}_t$-measurable.

Finally we introduce the notion of  feasible strategies.
\begin{definition}
A mapping $\alpha$ from $\mathcal{V}_{t,T}$ to $\mathcal{U}_{t,T}$ is called a nonanticipative strategy for player \Rmnum{1}, if for every $[t,T]$-valued, $\mathcal{F}_r$-stopping time $S$ and $v_1, v_2\in\mathcal{V}_{t,T}$ s.t. $v_1\equiv v_2$ on $[t,S]$, we have $\alpha[v_1]\equiv\alpha[v_2]$ on $[t,S]$. Similarly, we can define a nonanticipative strategy on $[t,T]$ $\beta:\mathcal{U}_{t,T}\rightarrow\mathcal{V}_{t,T}$ for player \Rmnum{2}. Let $\mathcal{A}_{t,T}$ (resp. $\mathcal{B}_{t,T}$) represent  all nonanticipative strategies for player \Rmnum{1} (resp. player \Rmnum{2}) on $[t,T]$.
\end{definition}
Then we can divide the  zero-sum SDGs into two types:

\noindent(i) lower  games where  lower value functions are
 \begin{equation}\label{lower}
  W(\Xi):=\mathop{\essinf}\limits_{\beta\in\mathcal{B}_{t,T}}\mathop{\esssup}\limits_{u\in\mathcal{U}_{t,T}}J(\Xi;u,\beta(u)),
\end{equation}
(ii) upper games where  upper value functions are
\begin{equation}\label{upper}
  U(\Xi):=\mathop{\esssup}\limits_{\alpha\in\mathcal{A}_{t,T}}\mathop{\essinf}\limits_{v\in\mathcal{V}_{t,T}}J(\Xi;\alpha(v),v).
\end{equation} A control-strategy pair $(u,\beta)$ for a lower game is called an optimal control-strategy pair, if the cost functional attains value function $W$  under $(u,\beta)$. For upper game we give symmetric definition.
\begin{remark}
  Notice that in lower and upper games, one player gets to know the choice of the other player's control  to make an informed decision, the SDGs are actually of information asymmetric condition, which are applicable in principal-agent problems, see Cvitani{\'{c}} et al. \cite{Cvitanic2013a}.
\end{remark}
The names of ``upper value function'' and ``lower value function'' of $U$ and $W$ are justified in Remark \ref{daxiao}.
 Since the lower games and upper games  are symmetrical, we only consider the lower case for  theoretical derivation.
\section{ DPP and  viscosity solution to HJBI equation}\label{sec3}
This section is divided into two parts. Subsection \ref{sub1} is dedicated to the determinicity and  (improved) regularity of the lower value function $W$, DPP of the lower SDG, and a verification theorem, while Subsection \ref{sub2} illustrates the unique weak (viscosity) solvability of    HJBI equation \eqref{hjbi} when $g$ is independent of $Z$.
\subsection{ DPP of  SDGs with non-Lipschitzian generators and regularity of value function}\label{sub1}

Since $J(\Xi;u,v)$ is $\mathcal{F}_t$-measurable, $W$ is $\mathcal{F}_t$-measurable. Nonetheless
by showing that $W$ is invariant under Girsanov transformation regarding all directions of Cameron-Martin space, i.e., the space of all differentiable functions in $\Omega$, whose derivatives are integrable on $[0,T]$,  we can even assert that $W$ is  deterministic.
\begin{lemma}
    $W$ is deterministic.
\end{lemma}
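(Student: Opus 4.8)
The plan is to combine the $\mathcal{F}_t$-measurability of $W(\Xi)$ with an invariance of $W$ under Girsanov (Cameron--Martin) shifts, which will force $W(\Xi)$ to be $P$-a.s.\ constant (cf.\ the shift technique of \cite{Buckdahn2008}). Let $H$ denote the Cameron--Martin space, i.e.\ the set of absolutely continuous $\ell\in\Omega$ with $\normalint_0^T|\dot\ell_r|^2\,dr<\infty$, and for $\ell\in H$ let $\tau_\ell\colon\Omega\to\Omega$, $\tau_\ell(\omega)=\omega+\ell$. Since $P\circ\tau_\ell^{-1}$ is equivalent to $P$ (its density being the exponential martingale generated by $\normalint_0^T\dot\ell_r\,dB_r$), the shift $\tau_\ell$ is a bi-measurable bijection that maps $P$-null sets to $P$-null sets in both directions, preserves every $\mathcal{F}_r$, and satisfies $(\essinf_i\xi_i)\circ\tau_\ell=\essinf_i(\xi_i\circ\tau_\ell)$ and $(\esssup_i\xi_i)\circ\tau_\ell=\esssup_i(\xi_i\circ\tau_\ell)$ $P$-a.s.\ for any family $\{\xi_i\}$ of random variables. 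Because $W(\Xi)$ is $\mathcal{F}_t$-measurable and $\mathcal{F}_t=\sigma(B_r:r\le t)\vee\mathcal{N}$, the value $W(\Xi)(\omega+\ell)$ depends on $\ell$ only through $\ell|_{[0,t]}$; hence it suffices to prove $W(\Xi)\circ\tau_\ell=W(\Xi)$ $P$-a.s.\ for those $\ell\in H$ that are \emph{constant on $[t,T]$}, and the identity then extends to every $\ell\in H$. The ergodicity of the Wiener measure under Cameron--Martin translations will finish the proof.

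Fix such an $\ell$ and, for $u\in\mathcal{U}_{t,T}$ and $v\in\mathcal{V}_{t,T}$, set $u^\ell:=u\circ\tau_\ell$ and $v^\ell:=v\circ\tau_\ell$, which again belong to $\mathcal{U}_{t,T}$ and $\mathcal{V}_{t,T}$ since $\tau_\ell$ preserves adaptedness. The core step is the identity
\[
J(\Xi;u,v)\circ\tau_\ell=J(\Xi;u^\ell,v^\ell),\qquad P\text{-a.s.}
\]
Composing the state equation \eqref{sdg} with $\tau_\ell$ and using the transformation rule for the It\^{o} integral under a Cameron--Martin shift,
\[
\Big(\int_t^{\cdot}\phi_r\,dB_r\Big)\circ\tau_\ell=\int_t^{\cdot}(\phi_r\circ\tau_\ell)\,dB_r+\int_t^{\cdot}(\phi_r\circ\tau_\ell)\,\dot\ell_r\,dr
\]
(valid for the adapted integrands at hand since $\tau_\ell$ preserves every $\mathcal{F}_r$), with $\phi_r=\sigma(r,x^{\Xi;u,v}_r,u_r,v_r)$: the last, ``extra-drift'' term vanishes because $\dot\ell_r=0$ for a.e.\ $r\in[t,T]$, while the Lebesgue integral transforms pathwise; consequently $\bar x:=x^{\Xi;u,v}\circ\tau_\ell$ solves \eqref{sdg} driven by the controls $(u^\ell,v^\ell)$, and strong uniqueness under Hypothesis \ref{sdexishu} gives $\bar x=x^{\Xi;u^\ell,v^\ell}$. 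Plugging this into BSDE \eqref{controlled} and applying the same rule to $\normalint_s^{T}Z^{\Xi;u,v}_r\,dB_r$ (again the extra drift is $0$ on $[s,T]\subseteq[t,T]$), one finds that $(Y^{\Xi;u,v}\circ\tau_\ell,Z^{\Xi;u,v}\circ\tau_\ell)$ solves BSDE \eqref{controlled} with data $(h,g,u^\ell,v^\ell)$; uniqueness for the monotone BSDE (Proposition \ref{yilai}) forces $Y^{\Xi;u,v}\circ\tau_\ell=Y^{\Xi;u^\ell,v^\ell}$, and taking $s=t$ gives the displayed identity.

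To transport this through the essential infimum and supremum in \eqref{lower}, note that $u\mapsto u^\ell$ is a bijection of $\mathcal{U}_{t,T}$ with inverse $u\mapsto u\circ\tau_{-\ell}$, and similarly on $\mathcal{V}_{t,T}$; and for $\beta\in\mathcal{B}_{t,T}$ define $\beta^\ell\colon\mathcal{U}_{t,T}\to\mathcal{V}_{t,T}$ by $\beta^\ell(w):=\big(\beta(w\circ\tau_{-\ell})\big)\circ\tau_\ell$. If $w_1\equiv w_2$ on $[t,S]$ for an $\mathcal{F}_r$-stopping time $S$, then $S\circ\tau_{-\ell}$ is again an $\mathcal{F}_r$-stopping time and $w_1\circ\tau_{-\ell}\equiv w_2\circ\tau_{-\ell}$ on $[t,S\circ\tau_{-\ell}]$, hence $\beta(w_1\circ\tau_{-\ell})\equiv\beta(w_2\circ\tau_{-\ell})$ there, and composing with $\tau_\ell$ yields $\beta^\ell(w_1)\equiv\beta^\ell(w_2)$ on $[t,S]$; thus $\beta^\ell\in\mathcal{B}_{t,T}$, and $\beta\mapsto\beta^\ell$ is a bijection of $\mathcal{B}_{t,T}$. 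Since $\big(\beta(u)\big)^\ell=\beta^\ell(u^\ell)$, the core identity and the commutation of $\essinf,\esssup$ with $\cdot\circ\tau_\ell$ give, $P$-a.s.,
\begin{align*}
W(\Xi)\circ\tau_\ell&=\mathop{\essinf}\limits_{\beta\in\mathcal{B}_{t,T}}\mathop{\esssup}\limits_{u\in\mathcal{U}_{t,T}}\big(J(\Xi;u,\beta(u))\circ\tau_\ell\big)\\
&=\mathop{\essinf}\limits_{\beta\in\mathcal{B}_{t,T}}\mathop{\esssup}\limits_{u\in\mathcal{U}_{t,T}}J\big(\Xi;u^\ell,\beta^\ell(u^\ell)\big)\\
&=\mathop{\essinf}\limits_{\beta\in\mathcal{B}_{t,T}}\mathop{\esssup}\limits_{u\in\mathcal{U}_{t,T}}J(\Xi;u,\beta(u))=W(\Xi).
\end{align*}
Hence $W(\Xi)$ is $\mathcal{F}_t$-measurable and $\tau_\ell$-invariant for every $\ell\in H$; by the quasi-invariance and ergodicity of the Wiener measure under Cameron--Martin translations (equivalently, because its Malliavin derivative vanishes), $W(\Xi)$ is $P$-a.s.\ equal to a constant, i.e.\ $W$ is deterministic.

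I expect the main obstacle to be the rigorous use of the It\^{o} integral under the shift $\tau_\ell$: the transformation rule above is the Cameron--Martin formula, and it is precisely its extra-drift correction that forces the restriction to $\ell$ constant on $[t,T]$ (so that no uncontrolled drift $\sigma\dot\ell$ is generated over the game horizon), a restriction that is harmless only because $W(\Xi)$ is $\mathcal{F}_t$-measurable. A secondary, mostly bookkeeping, point is to verify that admissibility of controls and, in particular, nonanticipativity of strategies survive composition with $\tau_{\ell}$ and $\tau_{-\ell}$, and that stopping times are carried to stopping times; the closing ergodicity fact is classical and can simply be quoted.
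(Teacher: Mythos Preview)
Your proposal is correct and follows precisely the Girsanov/Cameron--Martin shift-invariance argument of Proposition~3.3 in \cite{Buckdahn2008}, which is exactly what the paper invokes (without spelling out the details). You have essentially written out in full the ``standard arguments'' the paper defers to, including the restriction to shifts constant on $[t,T]$, the bijections on controls and strategies, and the concluding ergodicity step.
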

 Since $(x^{\Xi;u,v},Y^{\Xi;u,v},Z^{\Xi;u,v})\in S_n^{2l}([0,T])\times S^{2l}([0,T])\times H^2_d([0,T])$, $\forall l\ge 1$, the Girsanov transformation of Proposition 3.3 in \cite{Buckdahn2008} still holds here. Thus the standard arguments of Proposition 3.3 in \cite{Buckdahn2008} still apply here to get our result.

Then we   show the continuity and linear growth of $W$ regarding $x$:
\begin{prop}\label{xlianxu}
 Suppose Hypotheses \ref{sdexishu} and \ref{bsdexishu} apply, then for arbitrary $t\in[0,T]$ and $x,x'\in\mathbb{R}^n$, there exists a constant $K'$ s.t.
\begin{enumerate}[(i)]
\item $|W(\Xi)-W(\Xi')|\le K'|x-x'|,$
\item $|W|\le K'(1+|x|).$
\end{enumerate}
\end{prop}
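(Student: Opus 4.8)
The plan is to derive both statements directly from the a priori bounds of Proposition~\ref{yilai}, since $J(\Xi;u,v)=Y^{\Xi;u,v}_t=Y^{t,x;u,v}_t$ is exactly the quantity estimated there, together with one soft fact about essential extrema: if $\{\zeta_\gamma\}_{\gamma}$ and $\{\widetilde\zeta_\gamma\}_{\gamma}$ are two families of random variables over the same index set and $|\zeta_\gamma-\widetilde\zeta_\gamma|\le c$ a.s.\ for every $\gamma$ with a \emph{deterministic} constant $c$, then $|\essinf_\gamma\zeta_\gamma-\essinf_\gamma\widetilde\zeta_\gamma|\le c$ and $|\esssup_\gamma\zeta_\gamma-\esssup_\gamma\widetilde\zeta_\gamma|\le c$ a.s.; likewise, if every $\zeta_\gamma$ satisfies $|\zeta_\gamma|\le c$ a.s., then so do $\essinf_\gamma\zeta_\gamma$ and $\esssup_\gamma\zeta_\gamma$. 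Applying these observations successively to the inner $\esssup$ over $u\in\mathcal{U}_{t,T}$ and the outer $\essinf$ over $\beta\in\mathcal{B}_{t,T}$ in \eqref{lower} transfers any uniform estimate from the cost functionals $J(\Xi;u,\beta(u))$ to $W(\Xi)$.

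For (i), I would fix $t\in[0,T]$, $x,x'\in\mathbb{R}^n$, and arbitrary $\beta\in\mathcal{B}_{t,T}$, $u\in\mathcal{U}_{t,T}$, and apply the second inequality of Proposition~\ref{yilai} with $\xi=x$, $\xi'=x'$ and $v=\beta(u)$ to get $|J(\Xi;u,\beta(u))-J(\Xi';u,\beta(u))|\le K|x-x'|$ a.s., where the constant $K$ (depending only on $C,\theta,\eta,T$) is independent of $t$, $u$ and $\beta$. Since $K|x-x'|$ is deterministic and the bound is uniform in $(u,\beta)$, the non-expansiveness recalled above gives $|W(\Xi)-W(\Xi')|\le K|x-x'|$ a.s.; because $W$ is deterministic (by the preceding lemma) this is a genuine numerical inequality, and one may take $K'\ge K$.

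For (ii), I would again fix $u\in\mathcal{U}_{t,T}$, $v\in\mathcal{V}_{t,T}$, take $l=1$ in the first inequality of Proposition~\ref{yilai} with $\xi=x$, so that $|J(\Xi;u,v)|^2=|Y^{t,x;u,v}_t|^2\le K\bigl(1+|x|^2+E[\normalint_t^T|g(s,0,0,0,u_s,v_s)|^2\,ds\mid\mathcal{F}_t]\bigr)$ a.s. By Hypothesis~\ref{bsdexishu}(i) the map $(s,\mu,\nu)\mapsto g(s,0,0,0,\mu,\nu)$ is continuous on the compact set $[0,T]\times\mathbb{U}\times\mathbb{V}$, hence bounded by some $M>0$, so the conditional expectation above is at most $M^2T$ and $|J(\Xi;u,v)|\le\sqrt{K(1+M^2T)}\,\sqrt{1+|x|^2}\le C(1+|x|)$ with $C$ independent of $t$, $u$, $v$. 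Since $\mathcal{U}_{t,T}$ and $\mathcal{B}_{t,T}$ are non-empty (constant controls give constant, hence nonanticipative, strategies), the boundedness property of essential extrema yields $|W(\Xi)|\le C(1+|x|)$. Taking $K'=\max\{K,C\}$ would then give both assertions simultaneously.

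The argument is essentially mechanical once Proposition~\ref{yilai} is available; the only points that need attention are the uniformity in $(u,v)$ of the constants supplied there, the use of compactness of $\mathbb{U}$ and $\mathbb{V}$ to bound $g(\cdot,0,0,0,\cdot,\cdot)$, and — the one genuinely delicate step — the passage of the uniform bounds through the nested $\essinf/\esssup$ in \eqref{lower}, which is legitimate precisely because those bounds are by deterministic constants.
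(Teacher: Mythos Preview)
Your proposal is correct and follows essentially the same route as the paper: both derive the uniform bounds on $J(\Xi;u,v)$ from Proposition~\ref{yilai} together with the compactness of $\mathbb{U}\times\mathbb{V}$ (to bound $g(\cdot,0,0,0,\cdot,\cdot)$), and then push these deterministic bounds through the nested $\essinf/\esssup$. The paper carries this out via explicit inequality chains (e.g.\ $W(\Xi)-W(\Xi')\le\esssup_{u,v}|J(\Xi;u,v)-J(\Xi';u,v)|$), whereas you isolate the passage through essential extrema as a separate ``non-expansiveness'' observation, but the substance is identical.
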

\begin{proof}
By the definition of $W$, we have
\begin{align*}
  W&\le \mathop{\esssup}\limits_{\beta\in\mathcal{B}_{t,T}}\mathop{\esssup}\limits_{u\in\mathcal{U}_{t,T}}J(\Xi;u,\beta(u))\\
  &\le\mathop{\esssup}\limits_{u\in\mathcal{U}_{t,T},v\in\mathcal{V}_{t,T}}J(\Xi;u,v)\\
&\le\mathop{\esssup}\limits_{u\in\mathcal{U}_{t,T},v\in\mathcal{V}_{t,T}}|J(\Xi;u,v)|\\
&\overset{(a)}\le\mathop{\esssup}\limits_{u\in\mathcal{U}_{t,T},v\in\mathcal{V}_{t,T}}K'(1+|x|)\\
&=K'(1+|x|), \quad P-a.s.,
\end{align*}
where (a) follows from Proposition \ref{yilai} and the compactness of $\mathbb{U}$ and $\mathbb{V}$. Similarly we have $-W\le K'(1+|x|)$. Thus (ii) is proved.

For $(x,x')\in\mathbb{R}^n$,
\begin{align*}
     W(\Xi)-W(\Xi')
    &\le \mathop{\esssup}\limits_{\beta\in\mathcal{B}_{t,T}}|\mathop{\esssup}\limits_{u\in\mathcal{U}_{t,T}}J(\Xi;u,\beta(u))-\mathop{\esssup}\limits_{u\in\mathcal{U}_{t,T}}J(\Xi';u,\beta(u))|\\
    &\le \mathop{\esssup}\limits_{\beta\in\mathcal{B}_{t,T}}\mathop{\esssup}\limits_{u\in\mathcal{U}_{t,T}}|J(\Xi;u,\beta(u))-J(\Xi';u,\beta(u))|\\
    &\le \mathop{\esssup}\limits_{u\in\mathcal{U}_{t,T},v\in\mathcal{V}_{t,T}}|J(\Xi;u,v)-J(\Xi';u,v)|\\
    &\overset{(b)}\le K'|x-x'|, \quad P-a.s.,
  \end{align*}
where (b) is due to Proposition \ref{yilai}. Similarly we have $W(\Xi')-W(\Xi)\le K'|x-x'|$. Thus (i) is proved.
\end{proof}
We now give the DPP of the lower SDG.
\begin{theorem}\label{dpp} Suppose Hypotheses \ref{sdexishu} and \ref{bsdexishu} apply, then it holds that
 $$W=\underset{\beta \in {{\mathcal{B}}_{t,t+\delta }}}{\mathop{\essinf}}\,\underset{u\in {{\mathcal{U}}_{t,t+\delta }}}{\mathop{\esssup }}\,G_{t,t+\delta }^{\Xi;u,\beta (u)}[W(t+\delta ,x_{t+\delta }^{\Xi;u,\beta (u)})],\quad 0\le t<t+\delta \le T,$$
where
$$G_{s,t+\delta }^{\Xi;u,v}[\eta ]:=\widetilde{Y}_{s}^{\Xi;u,v},\quad s\in [t,t+\delta ],$$
$$\widetilde{Y}_{s}^{\Xi;u,v}=\eta +\int_{s}^{t+\delta }g(r,x_{r}^{\Xi;u,v},\widetilde{Y}_{r}^{\Xi;u,v},\widetilde{Z}_{r}^{\Xi;u,v},{{u}_{r}},{{v}_{r}})dr-\int_{s}^{t+\delta }{\widetilde{Z}_{r}^{\Xi;u,v}}d{{B}_{r}}.$$
\end{theorem}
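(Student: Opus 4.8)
The plan is to run the dynamic‑programming argument of Buckdahn and Li \cite{Buckdahn2008}, the only new point being that $g$ is merely monotone (not Lipschitz) in $y$, so every comparison or stability statement for the nonlinear operator $G^{\Xi;u,v}_{s,t+\delta}[\cdot]$ has to be invoked under condition (iii) of Hypothesis \ref{bsdexishu}. Write $W_\delta(\Xi)$ for the right‑hand side of the asserted identity. First I would record, in a preliminary step, the cocycle identities: by the flow property of SDE \eqref{sdg}, $x^{\Xi;u,v}_s=x^{t+\delta,x^{\Xi;u,v}_{t+\delta};u,v}_s$ for $s\in[t+\delta,T]$, $P$-a.s., and hence, by the uniqueness in Proposition \ref{yilai} applied with the $\mathcal{F}_{t+\delta}$-measurable datum $x^{\Xi;u,v}_{t+\delta}$, together with reading BSDE \eqref{controlled} on $[t,t+\delta]$,
$$J(\Xi;u,v)=G^{\Xi;u,v}_{t,t+\delta}\big[Y^{\Xi;u,v}_{t+\delta}\big]=G^{\Xi;u,v}_{t,t+\delta}\big[J(t+\delta,x^{\Xi;u,v}_{t+\delta};u,v)\big]$$
(controls on the right restricted to $[t+\delta,T]$). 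I would then isolate the two facts that replace the Lipschitz toolbox: (a) the comparison theorem for BSDEs with monotone generators (cf. \cite{Pardoux1999}), so that $\eta_1\le\eta_2$ a.s. forces $G^{\Xi;u,v}_{s,t+\delta}[\eta_1]\le G^{\Xi;u,v}_{s,t+\delta}[\eta_2]$; and (b) a continuous‑dependence estimate $|G^{\Xi;u,v}_{t,t+\delta}[\eta_1]-G^{\Xi;u,v}_{t,t+\delta}[\eta_2]|\le C\big(E[|\eta_1-\eta_2|^2\mid\mathcal{F}_t]\big)^{1/2}$ with $C=C(\theta,\delta)$ independent of $u,v,\Xi$. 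Finally I would use, below, that $W$ is deterministic (the lemma above) and that $x\mapsto W(t,x)$ and $x\mapsto J(t,x;u,v)$ are Lipschitz with a common constant $K'$ uniform in everything else (Propositions \ref{xlianxu} and \ref{yilai}).

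For the inequality $W\le W_\delta$ I would fix $\varepsilon>0$ and $\beta_1\in\mathcal{B}_{t,t+\delta}$, take a Borel partition $(O_i)_{i\ge1}$ of $\mathbb{R}^n$ with $\mathrm{diam}\,O_i\le\varepsilon$, points $x_i\in O_i$, and $\varepsilon$-optimal strategies $\beta^i\in\mathcal{B}_{t+\delta,T}$ for the games at $(t+\delta,x_i)$, i.e. $\esssup_{u'}J(t+\delta,x_i;u',\beta^i(u'))\le W(t+\delta,x_i)+\varepsilon$, and splice them into $\beta\in\mathcal{B}_{t,T}$ by $\beta(u)\equiv\beta_1(u)$ on $[t,t+\delta]$ and $\beta(u)\equiv\sum_i\mathbf{1}_{\{x^{\Xi;u,\beta(u)}_{t+\delta}\in O_i\}}\beta^i(u)$ on $[t+\delta,T]$ (checking that this is a well‑defined nonanticipative strategy). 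Using the cocycle identity, the Lipschitz‑in‑$x$ bounds and the $\varepsilon$-optimality, one gets $Y^{\Xi;u,\beta(u)}_{t+\delta}\le W\big(t+\delta,x^{\Xi;u,\beta(u)}_{t+\delta}\big)+(1+2K')\varepsilon$ $P$-a.s., whence by (a), (b) and $\beta(u)\equiv\beta_1(u)$ on $[t,t+\delta]$,
$$J(\Xi;u,\beta(u))=G^{\Xi;u,\beta_1(u)}_{t,t+\delta}\big[Y^{\Xi;u,\beta(u)}_{t+\delta}\big]\le G^{\Xi;u,\beta_1(u)}_{t,t+\delta}\big[W(t+\delta,x^{\Xi;u,\beta_1(u)}_{t+\delta})\big]+C\varepsilon.$$
Taking $\esssup$ over $u$, then $\essinf$ over $\beta_1$, then $\varepsilon\downarrow0$ yields $W\le W_\delta$.

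For $W\ge W_\delta$ I would fix $\varepsilon>0$, pick $\beta\in\mathcal{B}_{t,T}$ with $\esssup_u J(\Xi;u,\beta(u))\le W+\varepsilon$, and set $\beta_1:=\beta|_{[t,t+\delta]}\in\mathcal{B}_{t,t+\delta}$ (well defined by nonanticipativity). For fixed $u_1\in\mathcal{U}_{t,t+\delta}$, writing $x_{t+\delta}:=x^{\Xi;u_1,\beta_1(u_1)}_{t+\delta}$ and using a partition as above, on $\{x_{t+\delta}\in O_i\}$ the strategy $\beta$ applied to extensions of $u_1$ induces on $[t+\delta,T]$ a (path‑dependent) nonanticipative strategy $\bar\beta^i$ for the shifted game, so $W(t+\delta,x_i)\le\esssup_{u'}J(t+\delta,x_i;u',\bar\beta^i(u'))$; selecting for each $i$ a control in $\mathcal{U}_{t+\delta,T}$ within $\varepsilon$ of this $\esssup$ and splicing them over the partition produces an extension $u\in\mathcal{U}_{t,T}$ of $u_1$ with $Y^{\Xi;u,\beta(u)}_{t+\delta}\ge W(t+\delta,x_{t+\delta})-C\varepsilon$ $P$-a.s., hence by (a), (b),
$$W+\varepsilon\ge J(\Xi;u,\beta(u))=G^{\Xi;u_1,\beta_1(u_1)}_{t,t+\delta}\big[Y^{\Xi;u,\beta(u)}_{t+\delta}\big]\ge G^{\Xi;u_1,\beta_1(u_1)}_{t,t+\delta}\big[W(t+\delta,x^{\Xi;u_1,\beta_1(u_1)}_{t+\delta})\big]-C\varepsilon.$$
Taking $\esssup$ over $u_1$ gives $W_\delta\le W+C\varepsilon$ (the $\essinf$ over strategies only helps), and $\varepsilon\downarrow0$ yields $W\ge W_\delta$, which finishes the proof.

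The hard part, as the authors point out, lies entirely in the two splicings, and the more delicate one is in the step $W\ge W_\delta$: the restriction of a nonanticipative $\beta\in\mathcal{B}_{t,T}$ to $[t+\delta,T]$ is \emph{not} an element of $\mathcal{B}_{t+\delta,T}$ but only a strategy depending also on $B|_{[t,t+\delta]}$, so one must argue, for a fixed deterministic $y$ and $P$-a.e. realization of $\mathcal{F}_{t+\delta}$, that $W(t+\delta,y)$ is still dominated by the corresponding \emph{conditional} $\esssup_{u'}J$, and then reassemble these bounds over the $O_i$ into genuine admissible controls while keeping measurability and nonanticipativity — this is the control‑versus‑strategy difficulty flagged in the introduction. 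The deterministic character and Lipschitz‑in‑$x$ continuity of $W$ are what make the partition errors uniform, while the monotone generator forces one to propagate the $(\theta,\delta)$-dependent constants of (b) in place of Lipschitz constants throughout; apart from this the scheme is that of \cite{Buckdahn2008}.
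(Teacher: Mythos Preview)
Your proposal is correct and follows essentially the same approach as the paper: both reduce the proof to the standard Buckdahn--Li \cite{Buckdahn2008} argument, observing that the only tools needed---continuous dependence of the BSDE solution (your (b), the paper's Proposition \ref{yilai}), Lipschitz and linear growth of $W$ in $x$ (Proposition \ref{xlianxu}), and the comparison theorem for BSDEs (your (a), the paper citing \cite{Fan2012} rather than \cite{Pardoux1999})---all survive when the Lipschitz condition on $g$ in $y$ is replaced by monotonicity (iii). The paper's own proof is simply a one-paragraph pointer to these ingredients and to Theorem 3.6 of \cite{Buckdahn2008}; your sketch expands the splicing argument and correctly flags the delicate point in the $W\ge W_\delta$ direction, but there is no substantive difference in strategy.
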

\begin{proof}Owing to Propositions \ref{yilai} and \ref{xlianxu}, the same continuity dependence property of  cost function BSDE \eqref{controlled} as well as continuity and linear growth properties of $W$ still apply here. Moreover, the  comparison theorem for BSDEs with non-Lipschitzian generators (e.g., comparison theorem in \cite{Fan2012}) still holds  for the cost function and the value function.
Then employing   standard arguments of Theorem 3.6 in \cite{Buckdahn2008} gets our result.
\end{proof}
\begin{remark}\label{queding} Analogues to Remark 3.4 in \cite{Buckdahn2008} we have under Hypotheses \ref{sdexishu} and \ref{bsdexishu}, that

\noindent{\rm (a)}
$$W=\inf\limits_{\beta \in \mathcal{B}_{t,T}}\sup\limits_{u\in {{\mathcal{U}}_{t,T}}}E[J(\Xi;u,\beta (u))],$$
and
$$U=\sup\limits_{\alpha\in\mathcal{A}_{t,T}}\inf\limits_{v\in\mathcal{V}_{t,T}}  E[J(\Xi;\alpha(v),v)],$$
{\rm (b)}  for any $\delta\in(0,T-t]$ and $\varepsilon>0$ the following hold:

\noindent for any $\beta\in\mathcal{B}_{t,t+\delta}$, there exists some $u^\varepsilon\in\mathcal{U}_{t,t+\delta}$, s.t.
$$W\le G^{\Xi;u^\varepsilon,\beta(u^\varepsilon)}_{t,t+\delta}[W(t+\delta,x^{\Xi;u^\varepsilon,\beta(u^\varepsilon)}_{t+\delta})]+\varepsilon,\quad P-a.s.,$$
and there exists some $\beta^\varepsilon\in\mathcal{B}_{t,t+\delta}$ s.t., for all $u\in\mathcal{U}_{t,t+\delta}$,
$$W\ge G^{\Xi;u,\beta^\varepsilon(u)}_{t,t+\delta}[W(t+\delta,x^{\Xi;u,\beta^{\varepsilon}(u)}_{t+\delta})]-\varepsilon,\quad P-a.s.$$
\end{remark}
Subsequently we also get the $\frac{1}{2}$-H\"{o}lder continuity of $W$ regarding $t$ by standard arguments of Theorem 3.10 in \cite{Buckdahn2008} along with Theorem \ref{dpp}.
\begin{prop}\label{tlianxu}
 Suppose Hypotheses \ref{sdexishu} and \ref{bsdexishu} apply, then $W$ is $\frac{1}{2}$-H\"{o}lder continuous regarding $t$.
\end{prop}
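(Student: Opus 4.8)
The plan is to run the scheme of Theorem 3.10 in \cite{Buckdahn2008}, making the two adjustments forced by our hypotheses: the Lipschitz a priori estimates for BSDEs used there are replaced by the monotone-generator estimates of Proposition \ref{yilai} (equivalently, one applies It\^{o}'s formula using the monotonicity condition (iii) of Hypothesis \ref{bsdexishu} in place of a Lipschitz condition in $y$), and the H\"{o}lder constant is allowed to depend on $|x|$. Fix $x\in\mathbb R^n$ and $0\le t<t'\le T$, put $\delta:=t'-t$, and reduce matters, via the DPP of Theorem \ref{dpp} together with the $\varepsilon$-optimal strategies of Remark \ref{queding}(b) (which also handle the passage from conditional/a.s.\ bounds to the deterministic inequality), to a single estimate that is uniform in $(u,v)$:
$$\Big|G^{\Xi;u,v}_{t,t+\delta}\big[W(t+\delta,x^{\Xi;u,v}_{t+\delta})\big]-W(t+\delta,x)\Big|\le C\big(1+|x|^{p}\big)\sqrt{\delta}\qquad\text{a.s.}$$
Granting it, I would fix an arbitrary $\beta\in\mathcal B_{t,t+\delta}$ and obtain $W(\Xi)\le\esssup_{u\in\mathcal U_{t,t+\delta}}G^{\Xi;u,\beta(u)}_{t,t+\delta}\big[W(t+\delta,x^{\Xi;u,\beta(u)}_{t+\delta})\big]\le W(t+\delta,x)+C(1+|x|^{p})\sqrt{\delta}$, while the same uniform estimate (together with $\beta^{\varepsilon}$ from Remark \ref{queding}(b)) yields $W(\Xi)\ge W(t+\delta,x)-C(1+|x|^{p})\sqrt{\delta}$; recalling that $W$ is deterministic, this gives $|W(t,x)-W(t',x)|\le C(1+|x|^{p})\sqrt{|t-t'|}$, the asserted $\frac{1}{2}$-H\"{o}lder continuity in $t$ (with a constant that may depend on $|x|$, owing to the polynomial growth (iv)).

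To establish the displayed estimate, I would compare the BSDE defining $G^{\Xi;u,v}_{t,t+\delta}\big[W(t+\delta,x^{\Xi;u,v}_{t+\delta})\big]$ on $[t,t+\delta]$ with the trivial BSDE on the same interval carrying the \emph{constant} terminal value $W(t+\delta,x)$ and the null generator, whose solution is $\big(W(t+\delta,x),0\big)$. Applying It\^{o}'s formula to the squared difference and using the monotonicity (iii) and the Lipschitz continuity of $g$ in $(x,z)$ (Hypothesis \ref{bsdexishu}(i)), then Gronwall's inequality and the Burkholder--Davis--Gundy inequality, one is left to control, conditionally on $\mathcal F_t$, the terminal discrepancy $\big|W(t+\delta,x^{\Xi;u,v}_{t+\delta})-W(t+\delta,x)\big|^{2}$ and the generator term $\int_t^{t+\delta}\big|g\big(r,x^{\Xi;u,v}_{r},W(t+\delta,x),0,u_r,v_r\big)\big|^{2}\,dr$. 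The first is bounded by $C\,|x^{\Xi;u,v}_{t+\delta}-x|^{2}$ via Proposition \ref{xlianxu}(i), hence is $O(\delta)$ after the standard (conditional) moment estimate $E\big[|x^{\Xi;u,v}_{t+\delta}-x|^{2}\mid\mathcal F_t\big]\le C(1+|x|^{2})\delta$ for SDE \eqref{sdg}. For the second, I would use Hypothesis \ref{bsdexishu}(i),(iv), the boundedness of $(r,u,v)\mapsto g(r,0,0,0,u,v)$ on the compact set $[0,T]\times\mathbb U\times\mathbb V$, and the linear growth $|W|\le K'(1+|\cdot|)$ of Proposition \ref{xlianxu}(ii) to get $\big|g\big(r,x^{\Xi;u,v}_{r},W(t+\delta,x),0,u_r,v_r\big)\big|\le C\big(1+|x^{\Xi;u,v}_{r}|+|x|^{p}\big)$; the moment bound for SDE \eqref{sdg} then makes this term $O\big((1+|x|^{2p})\delta\big)$. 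Collecting the two bounds and taking square roots gives the estimate, and its uniform-in-$(u,v)$, almost-sure form is extracted from these conditional bounds exactly as in \cite{Buckdahn2008}.

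The only genuine obstacle, compared with \cite{Buckdahn2008}, is that $g$ is not Lipschitz in $y$: the generator term $\int_t^{t+\delta}\big|g(r,\cdot,W(t+\delta,x),0,\cdot,\cdot)\big|^{2}\,dr$ must still be shown to be $O(\delta)$, and this is precisely where the polynomial growth (iv) has to be combined with the already-established linear growth of $W$ in $x$ so that the constant is independent of $\delta$ (at the cost of depending on $|x|$), and where Proposition \ref{yilai} — that is, It\^{o}'s formula under the monotone condition — must be invoked instead of the classical Lipschitz a priori estimate when passing to the difference of the two BSDEs. Everything else is a routine transcription of the proof of Theorem 3.10 in \cite{Buckdahn2008}, now resting on Theorem \ref{dpp}, Proposition \ref{xlianxu} and Proposition \ref{yilai} in place of their Lipschitz counterparts.
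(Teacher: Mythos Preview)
Your proposal is correct and follows exactly the route the paper indicates: the paper's own ``proof'' is simply the sentence that the result follows by the standard arguments of Theorem~3.10 in \cite{Buckdahn2008} combined with Theorem~\ref{dpp}, and you have faithfully unpacked those arguments, making explicit the two necessary substitutions (monotone a~priori estimates from Proposition~\ref{yilai} in place of the Lipschitz ones, and polynomial growth (iv) to control the generator term at the frozen value $W(t+\delta,x)$, yielding an $|x|$-dependent constant).
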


 We then give a verification theorem where coefficients depend on $(u,v)$, which also contains results for $U$ for reader's convenience.
\begin{theorem}Suppose Hypotheses \ref{sdexishu} and \ref{bsdexishu} apply.

 (i) Suppose there exists a pair of mappings $\psi_2$ and $\phi_1$  s.t.
  \begin{align*}
  &  \quad \psi_2(\Xi,u,y,p,A)\in{\rm{argmin}\ } \mathcal{H}^{Z} (\Xi,y,u,\cdot,p,A)\\
  & \equiv\left\{\bar{v}\in \mathbb{V}|\mathcal{H}^{Z}(\Xi,y,u,\bar{v},p,A)=\underset{v\in\mathbb{V}}{\min}\ \mathcal{H}^Z(\Xi,y,u,v,p,A)\right\},
  \end{align*}
  \begin{align*}
  &  \phi_1(\Xi,y,p,A)\in{\rm{argmax}\ } \mathcal{H}^{Z} (\Xi,y,\cdot,\psi_2(\Xi,\cdot,y,p,A),p,A)\\
  & \equiv\left\{\bar{u}\in \mathbb{U}|\mathcal{H}^{Z}(\Xi,y,\bar{u},\psi_2(\Xi,\bar{u},y,p,A),p,A)=\underset{u\in\mathbb{U}}{\max}\ \mathcal{H}^Z(\Xi,y,u,\psi_2(\Xi,u,y,p,A),p,A)\right\},
  \end{align*}
  where
   \begin{align}&\mathcal{H}_{{}}^{Z}(\Xi,y,u,v,p,A)
 =\frac{1}{2}\Tr(\sigma {{\sigma }^{T}}(\Xi,u,v)A)+ \langle p,b(\Xi,u,v)\rangle+g(\Xi,y,p^{T}\sigma(\Xi,u,v),u,v). \nonumber
\end{align}
Suppose $W$ is a classical solution to HJBI equation \eqref{hjbiz}, where $|W|+|\partial_t W|+|D W|+|D^2 W|\le C(1+|x|)$, $\forall t\in[0,T]$.
\begin{equation}\label{hjbiz}
  \left\{\begin{array}{ll}
  \partial _{t}W+H^{-}(\Xi,W,DW,D^{2}W)=0,\quad    (\Xi)\in [0,T)\times \mathbb{R}^n,  \\\\
   W|_{t=T}=h(x),\quad x\in \mathbb{R}^n,
  \end{array}\right.
\end{equation}
\begin{align}&H_{{}}^{-}(\Xi,y,p,A)
 =\underset{u\in \mathbb{U}}{\mathop{\sup }}\,\underset{v\in \mathbb{V}}{\mathop{\inf }}\,\{\mathcal{H}_{{}}^{Z}(\Xi,y,u,v,p,A)\}. \nonumber
\end{align}
  Define \begin{align*}
&    \hat{u}(\Xi)=\phi_1(\Xi,W(\Xi),D W(\Xi), D^2 W(\Xi)),\\
& \hat{\beta}(\Xi,u)=\psi_2(\Xi,u,W(\Xi),D W(\Xi), D^2 W(\Xi)),
  \end{align*}which are assumed to be Lipschitzian regarding $(\Xi)$ and $(\Xi, u)$, respectively.  Then $\hat{u}(\Xi)$, $\hat{\beta}(\Xi,u)$ is the optimal feedback control-strategy pair and $W(\Xi)$ is the lower value function, i.e., $$W(\Xi)=J(\Xi;\hat{u},\hat{\beta})=\mathop{\essinf}\limits_{\beta\in\mathcal{B}_{t,T}}\mathop{\esssup}\limits_{u\in\mathcal{U}_{t,T}}J(\Xi;u,\beta(u)).$$    Moreover, $\hat{u}(\Xi)$, $\hat{\beta}(\Xi,u)$ is a saddle point in the sense that
  $$J(\Xi;\hat{u},\hat{\beta})=\underset{v\in\mathcal{V}_{t,T}}{\rm{essinf}}\ J(\Xi;\hat{u},v)=\underset{u\in\mathcal{U}_{t,T}}{\rm{esssup}}\ J(\Xi;u,\hat{\beta}(\Xi,u)).$$

 (ii) Suppose there exists a pair of mappings $\psi_1$ and $\phi_2$  s.t.
 \begin{align*}
  &  \psi_1(\Xi,v,y,p,A)\in{\rm{argmax}\ } \mathcal{H}^{Z} (\Xi,y,\cdot,v,p,A),\\
  &  \phi_2(\Xi,y,p,A)\in{\rm{argmin}\ } \mathcal{H}^{Z} (\Xi,y,\psi_1(\Xi,\cdot,y,p,A),\cdot,p,A),
  \end{align*}
  \begin{align*}
&    \hat{v}(\Xi)=\phi_2(\Xi,U(\Xi),DU(\Xi),D^2U(\Xi)),\\
 &   \hat{\alpha}(\Xi,v)=\psi_1(\Xi,v,U(\Xi),DU(\Xi),D^2U(\Xi)),
  \end{align*}
  are Lipschitzian regarding $(\Xi)$ and $(\Xi, v)$, respectively. Moreover, suppose $U$ is a classical solution to HJBI equation \eqref{upperhjbiz}  where $|U|+|\partial_t U|+ |D U|+|D^2 U|\le C(1+|x|)$, $\forall t\in[0,T]$,
  \begin{equation}\label{upperhjbiz}
  \left\{\begin{array}{ll}
  \partial _{t}U+H^{+}(\Xi,U,DU,D^{2}U)=0,\quad    (\Xi)\in [0,T)\times \mathbb{R}^n,  \\\\
   U|_{t=T}=h(x),\quad x\in \mathbb{R}^n,
  \end{array}\right.
\end{equation}
\begin{align}&H_{{}}^{+}(\Xi,y,p,A)=\underset{v\in \mathbb{V}}{\mathop{\inf }}\,\underset{u\in \mathbb{U}}{\mathop{\sup }}\,\{\mathcal{H}_{{}}^{Z}(\Xi,y,u,v,p,A)\}.\nonumber
\end{align}
 Then $\hat{v}(\Xi)$, $\hat{\alpha}(\Xi,v)$ is the optimal feedback control-strategy pair and $U(\Xi)$ is the lower value function.    Moreover, $\hat{v}(\Xi)$, $\hat{\alpha}(\Xi,v)$ is a saddle point in the sense that
 $$J(\Xi;\hat{\alpha},\hat{v})=\underset{v\in\mathcal{V}_{t,T}}{\rm{essinf}}\ J(\Xi;\hat{\alpha}(\Xi,v),v)=\underset{u\in\mathcal{U}_{t,T}}{\rm{esssup}}\ J(\Xi;u,\hat{v}).$$ \end{theorem}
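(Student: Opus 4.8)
The plan is to feed the classical solution $W$ through It\^{o}'s formula along the controlled trajectories and to read off a BSDE whose generator differs from that of the cost BSDE \eqref{controlled} only by a running term measuring the gap between $\mathcal{H}^{Z}$ and $H^{-}$. Fix $u,v$, write $x_s=x_s^{\Xi;u,v}$, $\Xi_s=(s,x_s)$, and set $Z^{W}_s:=DW(\Xi_s)^{T}\sigma(\Xi_s,u_s,v_s)$ (here and below $W,DW,D^2W$ are evaluated at $\Xi_s$). Since $|W|+|\partial_tW|+|DW|+|D^2W|\le C(1+|x|)$, the moment bounds for $x^{\Xi;u,v}$ put $Z^{W}\in\mathcal{H}^{2}_{d}([0,T])$, and It\^{o}'s formula, together with $\partial_tW+H^{-}(\Xi,W,DW,D^2W)=0$ and the identity $\tfrac12\Tr(\sigma\sigma^{T}D^2W)+\langle DW,b\rangle=\mathcal{H}^{Z}(\Xi_s,W,u_s,v_s,DW,D^2W)-g(\Xi_s,W,Z^{W}_s,u_s,v_s)$, gives
$$W(\Xi_s)=h(x_T)+\int_s^T\big[g(\Xi_r,W(\Xi_r),Z^{W}_r,u_r,v_r)+I^{u,v}_r\big]\,dr-\int_s^TZ^{W}_r\,dB_r,\quad s\in[t,T],$$
with $I^{u,v}_r:=H^{-}(\Xi_r,W,DW,D^2W)-\mathcal{H}^{Z}(\Xi_r,W,u_r,v_r,DW,D^2W)$. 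Thus $(W(\Xi_\cdot),Z^{W})$ is the unique solution of the BSDE with terminal $h(x_T)$ and generator $g(\Xi_r,y,z,u_r,v_r)+I^{u,v}_r$; this generator keeps the monotonicity of Hypothesis \ref{bsdexishu}(iii) (adding the adapted, polynomially bounded process $I^{u,v}$ does not change the constant $\theta$) and the Lipschitz/growth structure, so the a priori estimates of Proposition \ref{yilai} and the comparison theorem for BSDEs with monotone generators (cf. \cite{Fan2012}) are available both for it and for \eqref{controlled} along the same $x^{\Xi;u,v}$.

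The next step is to track the sign of $I^{u,v}$ in three regimes, using that — $\mathbb{U},\mathbb{V}$ being compact and $b,\sigma,g$ continuous in the controls — the definitions of $\psi_2$ and $\phi_1$ give $\min_{v'}\mathcal{H}^{Z}(\Xi_s,W,\hat u(\Xi_s),v',DW,D^2W)=H^{-}(\Xi_s,W,DW,D^2W)$. \textbf{(i)} For $u_s=\hat u(\Xi_s)$, $v_s=\hat\beta(\Xi_s,\hat u(\Xi_s))$ — the closed-loop state equation being well posed since $\hat u,\hat\beta$ are Lipschitz in their spatial arguments — one has $\mathcal{H}^{Z}(\Xi_s,W,\hat u(\Xi_s),\hat\beta(\Xi_s,\hat u(\Xi_s)),DW,D^2W)=\min_{v'}\mathcal{H}^{Z}(\Xi_s,W,\hat u(\Xi_s),v',DW,D^2W)=H^{-}$, so $I^{\hat u,\hat\beta}\equiv0$; by uniqueness $W(\Xi_\cdot)=Y^{\Xi;\hat u,\hat\beta}$, hence $W(\Xi)=J(\Xi;\hat u,\hat\beta)$. \textbf{(ii)} For $u_s=\hat u(\Xi_s)$ and any $v\in\mathcal{V}_{t,T}$, $\mathcal{H}^{Z}(\Xi_s,W,\hat u(\Xi_s),v_s,DW,D^2W)\ge\min_{v'}\mathcal{H}^{Z}(\Xi_s,W,\hat u(\Xi_s),v',DW,D^2W)=H^{-}$, so $I^{\hat u,v}\le0$ and comparison gives $W(\Xi)\le J(\Xi;\hat u,v)$ $P$-a.s. \textbf{(iii)} For any $u\in\mathcal{U}_{t,T}$ and $v_s=\hat\beta(\Xi_s,u_s)$ (again well posed, $u$ fixed), $\mathcal{H}^{Z}(\Xi_s,W,u_s,\hat\beta(\Xi_s,u_s),DW,D^2W)=\min_{v'}\mathcal{H}^{Z}(\Xi_s,W,u_s,v',DW,D^2W)\le H^{-}$, so $I^{u,\hat\beta(u)}\ge0$ and comparison gives $W(\Xi)\ge J(\Xi;u,\hat\beta(u))$ $P$-a.s.

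Assembling these: (ii) and (i) give $W(\Xi)\le\essinf_{v\in\mathcal{V}_{t,T}}J(\Xi;\hat u,v)\le J(\Xi;\hat u,\hat\beta)=W(\Xi)$, so $\essinf_{v}J(\Xi;\hat u,v)=W(\Xi)$; (iii) and (i) give $W(\Xi)\ge\esssup_{u\in\mathcal{U}_{t,T}}J(\Xi;u,\hat\beta(u))\ge J(\Xi;\hat u,\hat\beta)=W(\Xi)$, so $\esssup_{u}J(\Xi;u,\hat\beta(u))=W(\Xi)$. With $W(\Xi)=J(\Xi;\hat u,\hat\beta)$ this is exactly the asserted saddle-point identity, and it yields $\essinf_{\beta\in\mathcal{B}_{t,T}}\esssup_{u\in\mathcal{U}_{t,T}}J(\Xi;u,\beta(u))\le\esssup_{u}J(\Xi;u,\hat\beta(u))=W(\Xi)$.

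The reverse inequality $\esssup_{u\in\mathcal{U}_{t,T}}J(\Xi;u,\beta(u))\ge W(\Xi)$, required for every $\beta\in\mathcal{B}_{t,T}$, is where I expect the main difficulty: it forces player \Rmnum{1} to use the feedback $\hat u$ against an arbitrary nonanticipative $\beta$, and the induced closed-loop equation $dx_s=b(\Xi_s,\hat u(\Xi_s),\beta[\hat u(\cdot,x_\cdot)]_s)\,ds+\sigma(\cdots)\,dB_s$ need not be well posed because $\beta$ is only nonanticipative. I would handle this by a time-discretization: for a partition $\pi:t=s_0<\cdots<s_N=T$ define $u^{\pi}\in\mathcal{U}_{t,T}$ to equal, on each $[s_k,s_{k+1})$, the frozen value $\hat u(s_k,x^{\pi}_{s_k})$, where $x^{\pi}$ is built recursively as the state driven by $(u^{\pi},\beta(u^{\pi}))$ — well posed since player \Rmnum{1}'s control is a frozen $\mathcal{F}_{s_k}$-random variable on each subinterval. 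Lipschitzianity of $\hat u$ and the uniform-in-$\pi$ modulus of continuity of $x^{\pi}$ force $u^{\pi}$ towards the feedback realization of $\phi_1$ along $x^{\pi}$, so that, by continuity of $\mathcal{H}^{Z}$ in the controls and the linear growth of $DW,D^2W$, $I^{u^{\pi},\beta(u^{\pi})}\le\rho^{\pi}$ with $E\!\int_t^T\rho^{\pi}_s\,ds\to0$; the stability estimate for BSDEs with monotone generators then gives $E\big[(W(\Xi)-J(\Xi;u^{\pi},\beta(u^{\pi})))^{+}\big]\to0$, whence $\esssup_{u}J(\Xi;u,\beta(u))\ge\liminf_{|\pi|\to0}J(\Xi;u^{\pi},\beta(u^{\pi}))\ge W(\Xi)$ $P$-a.s. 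Combined with the previous paragraph this proves $W(\Xi)=J(\Xi;\hat u,\hat\beta)=\essinf_{\beta\in\mathcal{B}_{t,T}}\esssup_{u\in\mathcal{U}_{t,T}}J(\Xi;u,\beta(u))$, and part (ii) of the theorem (the upper game, with $U$) follows by the symmetric argument with $\psi_1,\phi_2,H^{+},U$ replacing $\psi_2,\phi_1,H^{-},W$.
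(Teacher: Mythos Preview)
Your core machinery---apply It\^{o} to $W$ along $x^{\Xi;u,v}$, rewrite the result as a BSDE whose generator is $g+I^{u,v}$, track the sign of $I^{u,v}$ in the three regimes, and invoke the comparison theorem for BSDEs with monotone generators---is exactly what the paper does. The paper's proof is shorter only because it stops after establishing $W(\Xi)=\esssup_{u}J(\Xi;u,\hat\beta)$ (your regime (iii)+(i)) and $W(\Xi)=\essinf_{v}J(\Xi;\hat u,v)$ (your regime (ii)+(i)), then declares $(\hat u,\hat\beta)$ optimal and a saddle point.

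Where you diverge from the paper is precisely the point you flag as ``the main difficulty'': the reverse inequality $\essinf_{\beta}\esssup_{u}J(\Xi;u,\beta(u))\ge W(\Xi)$. The paper's proof does not address this step at all; it proves the saddle identities and the inequality $\essinf_{\beta}\esssup_{u}J\le W$ (by taking $\beta=\hat\beta$), but the opposite inequality is simply not argued. Your observation that the natural candidate $u_s=\hat u(s,x_s^{\,\cdot,\beta(\cdot)})$ against a general nonanticipative $\beta$ is a fixed-point problem with no regularity on $\beta$ is exactly the obstruction, and the paper glosses over it. So here you are more careful than the paper.

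Your discretization patch is a reasonable route and the recursive construction of $u^\pi$ is well posed thanks to nonanticipativity of $\beta$. The part that would need real work to make rigorous is the passage ``$u^\pi$ tends to the feedback realization of $\phi_1$ along $x^\pi$, hence $I^{u^\pi,\beta(u^\pi)}\le\rho^\pi$ with $E\!\int\rho^\pi\to0$'': since $\beta$ carries no modulus of continuity, you will need a uniform-in-$\pi$ moment bound on $\sup_{s\in[s_k,s_{k+1})}|x^\pi_s-x^\pi_{s_k}|$ (available from the SDE estimates) together with Lipschitzianity of $\hat u$ and continuity of $\mathcal{H}^{Z}$ in $u$ to turn this into a quantitative bound on $(I^{u^\pi,\beta(u^\pi)})^{+}$; then the BSDE stability estimate under the monotone condition closes the argument. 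None of this is in the paper, so in that sense your proposal is more complete than the paper's own proof, though still at sketch level on this last step.
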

\begin{proof}
 Since declarations (i) and (ii) are symmetric, we only give a proof of  (i).

 Fix $\hat{\beta}$. For $u\in\mathcal{U}_{t,T}$, denote $\check{x}$ and $\check{Y}$ the solution of SDE \eqref{sdg} and BSDE \eqref{controlled} with controls $u$ and $\check{v}:=\hat{\beta}(\cdot,\check{x}(\cdot),u(\cdot))$.  It\^{o}'s formula gives
\begin{align*} W(s,\check{x}_s)
=&\ h(\check{x}_T)-\int_s^T\big[\partial_rW(r,\check{x}_r)\\
& +\frac{1}{2}\Tr ( \sigma\sigma^{T}(r,\check{x}_r,u_r,\check{v}_r)D^2W(r,\check{x}_r))+\langle D W(r,\check{x}_r),b(r,\check{x}_r,u_r,\check{v}_r)\rangle \big]dr\\
& -\int_s^T(DW)^T(r,\check{x}_r)\sigma(r,\check{x}_r,u_r,\check{v}_r) dB_r.\end{align*}
Plugging  $\mathcal{H}^Z$ into the equation above, we obtain
\begin{align*} W(s,\check{x}_s)
=&\ h(\check{x}_T)\\
& +\int_s^T\big[g(r,\check{x}_r,W(r,\check{x}_r),(DW)^T(r,\check{x}_r)\sigma(r,\check{x}_r,u_r,\check{v}_r) ,u_r,\check{v}_r)-\partial_rW(r,\check{x}_r)\\
&-\mathcal{H}^Z(r,\check{x}_r,W(r,\check{x}_r),u_r,\check{v}_r,D W(r,\check{x}_r),D^2 W(r,\check{x}_r))\big]dr\\
&-\int_s^T(DW)^T(r,\check{x}_r)\sigma(r,\check{x}_r,u_r,\check{v}_r) dB_r.\end{align*}
By means of $\hat{\beta}$ and HJBI equation \eqref{hjbiz},
we have
\begin{align}
   & \nonumber \quad-\partial_rW(r,\check{x}_r)-\mathcal{H}^Z(r,\check{x}_r,W(r,\check{x}_r),u_r,\check{v}_r,D W(r,\check{x}_r),D^2 W(r,\check{x}_r) )\\
   & \nonumber=-\partial_r W(r,\check{x}_r)-\underset{v\in\mathbb{V}}{\inf}\ \mathcal{H}^Z(r,\check{x}_r,W(r,\check{x}_r),u_r,v,D W(r,\check{x}_r),D^2 W(r,\check{x}_r))\\
   &\ge -\partial_r W(r,\check{x}_r)-\underset{u\in\mathbb{U}}{\sup}\  \underset{v\in\mathbb{V}}{\inf}\ \mathcal{H}^Z(r,\check{x}_r,W(r,\check{x}_r),u,v,D W(r,\check{x}_r),D^2 W(r,\check{x}_r))=0.\label{le}
\end{align}
Due to comparison theorem for BSDEs with non-Lipschitzian generators (e.g., comparison theorem in \cite{Fan2012}), $W(\Xi)\ge \check{Y}_t$. Replacing $u$ by $\hat{u}$, the inequality of  \eqref{le} becomes an equality, and hence $W(\Xi)=J(\Xi;\hat{u},\hat{\beta})=\underset{u\in\mathcal{U}_{t,T}}{\mathop{\esssup }}\ J(\Xi;u,\hat{\beta})$. Therefore, $(\hat{u},\hat{\beta})$ is an optimal control-strategy pair.

Fix $\hat{u}$, for any $v\in\mathcal{V}_{t,T}$, similarly we have
$W(\Xi)=\underset{v\in\mathcal{V}_{t,T}}{\mathop{\essinf }}\ J(\Xi;\hat{u},v)$. Then $(\hat{u},\hat{\beta})$ is a saddle point.
\end{proof}
\subsection{Viscosity solution to HJBI equation}\label{sub2}
Since we can only assert the continuity of $W$ by Propositions \ref{xlianxu}, \ref{tlianxu} and \ref{yizhilip} under the monotonic condition,  the derived HJBI equation  might not admit a classical solution, by which we can only consider  viscosity solutions.
 For this purpose, we investigate the case where $g$ is free of $z$, i.e., $g(\Xi,y,z,u,v)=g(\Xi,y,u,v)$ in BSDE \eqref{controlled}. The HJBI equation regarding $W$ is
\begin{equation}\label{hjbi}
  \left\{\begin{array}{ll}
  \partial _{t}W+H^{-}(\Xi,W,DW,D^{2}W)=0,\quad    (\Xi)\in [0,T)\times \mathbb{R}^n,  \\\\
   W|_{t=T}=h(x),\quad x\in \mathbb{R}^n,
  \end{array}\right.
\end{equation}
where
\begin{align}&H_{{}}^{-}(\Xi,y,p,A)\nonumber\\
 =\:\;&\underset{u\in \mathbb{U}}{\mathop{\sup }}\,\underset{v\in \mathbb{V}}{\mathop{\inf }}\,\{\frac{1}{2}\Tr(\sigma {{\sigma }^{T}}(\Xi,u,v)A)+ \langle p, b(\Xi,u,v)\rangle+g(\Xi,y,u,v)\}, \nonumber
\end{align}
and regarding $U$ is
 \begin{equation}\label{upperhjbi}
  \left\{\begin{array}{ll}
  \partial _{t}U+H^{+}(\Xi,U,DU,D^{2}U)=0,\quad    (\Xi)\in [0,T)\times \mathbb{R}^n,  \\\\
   U|_{t=T}=h(x),\quad x\in \mathbb{R}^n,
  \end{array}\right.
\end{equation}
where
\begin{align}&H_{{}}^{+}(\Xi,y,p,A)\nonumber\\
 =\:\;&\underset{v\in \mathbb{V}}{\mathop{\inf }}\,\underset{u\in \mathbb{U}}{\mathop{\sup }}\,\{\frac{1}{2}\Tr(\sigma {{\sigma }^{T}}(\Xi,u,v)A)+ \langle p, b(\Xi,u,v)\rangle+g(\Xi,y,u,v)\},\nonumber
\end{align}
for
$(\Xi,y,p,A)\in\mathbb{H}.$

We give the definition of viscosity solutions of HJBI equation \eqref{hjbi}. Again, only the lower case is considered for theoretical derivation.
\begin{definition}\label{nxjdingyi}
  A continuous function $W:[0,T]\times\mathbb{R}^n\rightarrow \mathbb{R}$ is called

  \noindent{\rm(i)} a viscosity subsolution to HJBI equation \eqref{hjbi}
 if $W(T,x)\le h(x)$ for arbitrary $x\in\mathbb{R}^n$ and if for arbitrary function $\phi\in C^3_{b}([0,T]\times\mathbb{R}^n)$ and $(\Xi)\in[0,T]\times\mathbb{R}^n$ s.t. $W-\phi$ attains its local maximum at $(\Xi)$:
$$\partial_t\phi+H^-(\Xi,\phi,D\phi,D^2\phi)\ge 0,$$
{\rm(ii)} a viscosity  supersolution to HJBI equation \eqref{hjbi}
 if $W(T,x)\ge h(x)$ for arbitrary $x\in\mathbb{R}^n$ and if for arbitrary function $\phi\in C^3_{b}([0,T]\times\mathbb{R}^n)$ and $(\Xi)\in[0,T]\times\mathbb{R}^n$ s.t. $W-\phi$ attains its local minimum at $(\Xi)$:
$$\partial_t\phi+H^-(\Xi,\phi,D\phi,D^2\phi)\le 0,$$
{\rm(iii)} a viscosity solution to HJBI equation \eqref{hjbi} if it is both a viscosity subsolution and a viscosity supersolution to \eqref{hjbi}.
\end{definition}
We first construct a sequence of $g$'s mollifications  regarding $y$:
\begin{align*}
 & \quad g_m(\Xi,y,u,v)\\&:=(\zeta_m(\cdot) * g(\Xi,\cdot,u,v))(y)=\int_\mathbb{R}\zeta_m(y-y') g(\Xi,y',u,v)dy'=\int_\mathbb{R}\zeta_m(y') g(\Xi,y-y',u,v)dy',
\end{align*}
where $\zeta_m:\mathbb{R}\rightarrow\mathbb{R}^+\in C_c^{\infty}(\mathbb{R})$ is a sequence of smooth mollifiers    with compact support in $[-\frac{1}{m},\frac{1}{m}]$ satisfying $\int_\mathbb{R}\zeta_m(y)dy=1.$ Note that  $\zeta_m\in C_c^{\infty}(\mathbb{R})$ implies that all the derivatives of $\zeta_m$ in $\mathbb{R}$ are bounded, hence $\zeta_m$ is also a Lipschitzian function in $\mathbb{R}$.

Then we could easily verify that $g_m$ fulfills Hypothesis \ref{bsdexishu}.

Introduce the BSDE with  generator $g_m$:
\begin{equation}\label{moguangbsde}	Y_{s}^{\Xi,m;u,v}=h(x_{T}^{\Xi;u,v})+\int_{s}^{T}g_m(r,x_{r}^{\Xi;u,v},Y_{r}^{\Xi,m;u,v},u_r,v_r)dr-\int_{s}^{T}Z_r^{\Xi,m;u,v}dB_{r},\quad t\le s\le T.
\end{equation}
Define the associated cost function $J_m(\Xi;u,v):=Y^{\Xi,m;u,v}_t$,    lower value function
\begin{equation}\label{moguangzhi}W_m:=\underset{\beta \in {{\mathcal{B}}_{t,T}}}{\mathop{\essinf }}\,\underset{u\in {{\mathcal{U}}_{t,T}}}{\mathop{\esssup }}\,J_m(\Xi;u,\beta (u)),
\end{equation}
and  Hamiltonian
\begin{align}\label{moguangha}&H_{m}^{-}(\Xi,y,p,A)\nonumber\\
 =\:\;&\underset{u\in \mathbb{U}}{\mathop{\sup }}\,\underset{v\in \mathbb{V}}{\mathop{\inf }}\,\{\frac{1}{2}\Tr(\sigma {{\sigma }^{T}}(\Xi,u,v)A)+\langle p, b(\Xi,u,v)\rangle+g_m(\Xi,y,u,v)\},
\end{align}
for $(\Xi,y,p,A)\in\mathbb{H}.$

Subsequently we present a series of Lemmas to show the uniform convergence of $g_m$, $J_m$, $W_m$ and $H^-_m$  to $g$, $J$, $W$ and $H^-$, in   arbitrary compact set of their   domains.
\begin{lemma}\label{xishuyizhi}
  Suppose (i) in Hypothesis \ref{bsdexishu} applies, then $g_m\rightarrow g$ as $m\rightarrow\infty$, uniformly in any compact set  $\mathcal{O}\subset[0,T]\times\mathbb{R}^n\times\mathbb{R}\times\mathbb{U}\times\mathbb{V}$.
\end{lemma}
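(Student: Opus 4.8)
The plan is to show that the mollification converges uniformly on compacts using the standard approximate-identity argument, the only extra ingredient being the uniform continuity of $g$ guaranteed by hypothesis. First I would fix a compact set $\mathcal{O}\subset[0,T]\times\mathbb{R}^n\times\mathbb{R}\times\mathbb{U}\times\mathbb{V}$. Since $\mathbb{U}$ and $\mathbb{V}$ are themselves compact, $\mathcal{O}$ is contained in a set of the form $[0,T]\times\bar{B}_R\times[-R,R]\times\mathbb{U}\times\mathbb{V}$ for some $R>0$. The key observation is that by condition (i) in Hypothesis \ref{bsdexishu}, $g$ is continuous in $(t,y,u,v)$ uniformly in $(x,z)$ — and here there is no $z$ — and Lipschitzian in $x$; in particular $g$ restricted to the compact set $[0,T]\times\bar{B}_R\times[-R-1,R+1]\times\mathbb{U}\times\mathbb{V}$ is uniformly continuous. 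Denote by $\varpi$ a modulus of continuity for $g$ in the $y$-variable on this enlarged set, valid uniformly in $(t,x,u,v)$.

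The main estimate is then a one-line computation: for $(\Xi,y,u,v)\in\mathcal{O}$ and $m$ large enough that $\tfrac1m\le 1$,
\[
|g_m(\Xi,y,u,v)-g(\Xi,y,u,v)|
=\Bigl|\int_{\mathbb{R}}\zeta_m(y')\bigl[g(\Xi,y-y',u,v)-g(\Xi,y,u,v)\bigr]\,dy'\Bigr|
\le\int_{|y'|\le 1/m}\zeta_m(y')\,\varpi\bigl(|y'|\bigr)\,dy'
\le\varpi\!\left(\tfrac1m\right),
\]
where I used $\int_{\mathbb{R}}\zeta_m=1$, $\zeta_m\ge 0$, and $\mathrm{supp}\,\zeta_m\subset[-\tfrac1m,\tfrac1m]$ (so the translated points $y-y'$ stay in $[-R-1,R+1]$, where the modulus is valid). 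The right-hand side is independent of $(\Xi,y,u,v)\in\mathcal{O}$ and tends to $0$ as $m\to\infty$, which gives uniform convergence on $\mathcal{O}$.

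There is no real obstacle here; the only point requiring a small amount of care is making sure the modulus of continuity $\varpi$ is taken on a slightly enlarged compact set (so that $y-y'$ does not escape it for $|y'|\le\tfrac1m$) and that one invokes \emph{uniform} continuity in $y$ that is itself uniform in $(x,u,v,t)$ — precisely the content of Hypothesis \ref{bsdexishu}(i) combined with compactness of $\mathbb{U},\mathbb{V}$. I would also remark in passing that the same argument, applied with the Lipschitz modulus, shows $g_m\to g$ with an explicit rate $O(1/m)$ whenever $g$ happens to be Lipschitz in $y$; but for the lemma as stated the modulus-of-continuity version suffices.
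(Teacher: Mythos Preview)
Your proof is correct and follows essentially the same route as the paper: write $g_m-g$ as an integral against $\zeta_m$, enlarge the compact set in the $y$-direction so that $y-y'$ stays inside, invoke uniform continuity of $g$ on the enlarged compact (from Hypothesis~\ref{bsdexishu}(i) plus compactness), and bound the integral by the modulus at $1/m$. The paper phrases the final step with an $\varepsilon$ rather than an explicit modulus $\varpi$, but the argument is identical.
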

\begin{proof}
Due to the fact that $\int_\mathbb{R}\zeta_m(y')dy'=1$,
$$g_m(\Xi,y,u,v)-g(\Xi,y,u,v)=\int_\mathbb{R}(g(\Xi,y-y',u,v)-g(\Xi,y,u,v))\zeta_m(y')dy'.$$
For any compact set $\mathcal{O}\in[0,T]\times\mathbb{R}^n\times\mathbb{R}\times\mathbb{U}\times\mathbb{V}$, there exists another compact set $\widetilde{\mathcal{O}}$ s.t. $(\Xi,y-y',u,v)\in\widetilde{\mathcal{O}}$ for any $(\Xi,y,u,v)\in\mathcal{O}$ and $y'\in[-1,1]$. By virtue of the continuity regarding $(t,y,u,v)$ and Lipschitzian continuity regarding $x$ of $g$, we get the uniform continuity of $g$ regarding $(\Xi,y,u,v)$ in $\widetilde{\mathcal{O}}$, which means for arbitrary $\varepsilon>0$ and $m$ sufficiently large, we get
\begin{align*}
  &\quad\sup\limits_{(\Xi,y,u,v)\in\mathcal{O}}|g_m(\Xi,y,u,v)-g(\Xi,y,u,v)|\\&\le \sup\limits_{(\Xi,y,u,v)\in\mathcal{O}}\int_{[-\frac{1}{m},\frac{1}{m}]}|g(\Xi,y-y',u,v)-g(\Xi,y,u,v)|\zeta_m(y')dy'\\
&\le\varepsilon\int_{[-\frac{1}{m},\frac{1}{m}]}\zeta_m(y')dy'=\varepsilon,
\end{align*}
which finishes the proof.
\end{proof}
The verification procedures of the following Lemma  are standard, analogues to Lemma 4.4 of \cite{Pu2018}, thus are skipped.
\begin{lemma}\label{costyizhi}
  Suppose Hypotheses \ref{sdexishu} and \ref{bsdexishu} apply, then for any $u\in\mathcal{U}_{t,T}$, $v\in\mathcal{V}_{t,T}$, and any compact set $\mathcal{O}\subset[0,T]\times\mathbb{R}^n$,
  $$\sup\limits_{(\Xi)\in \mathcal{O}}E[|Y^{\Xi,m;u,v}_t-Y^{\Xi;u,v}_t|^2]\rightarrow 0, \quad m\rightarrow\infty. $$ Moreover, due to the compactness of control domain, the convergence is uniformly in $(u,v)\in \mathcal{U}_{t,T}\times\mathcal{V}_{t,T}$.
\end{lemma}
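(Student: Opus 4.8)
The plan is to compare the two cost BSDEs, which are driven by the \emph{same} forward diffusion $x^{\Xi;u,v}$, and reduce the assertion to the uniform convergence already proved in Lemma \ref{xishuyizhi}. Fix $(\Xi)=(t,x)$ and $(u,v)\in\mathcal{U}_{t,T}\times\mathcal{V}_{t,T}$; abbreviate $x_r=x^{\Xi;u,v}_r$, $Y_r=Y^{\Xi;u,v}_r$, $Y^m_r=Y^{\Xi,m;u,v}_r$, and put $\delta Y=Y^m-Y$, $\delta Z=Z^{\Xi,m;u,v}-Z^{\Xi;u,v}$. Since both BSDEs share the terminal value $h(x^{\Xi;u,v}_T)$, subtracting \eqref{controlled} from \eqref{moguangbsde} and writing the driver difference as
$$g_m(r,x_r,Y^m_r,u_r,v_r)-g(r,x_r,Y_r,u_r,v_r)=\big[g_m(r,x_r,Y^m_r,u_r,v_r)-g_m(r,x_r,Y_r,u_r,v_r)\big]+(g_m-g)(r,x_r,Y_r,u_r,v_r),$$
I would apply It\^o's formula to $|\delta Y_s|^2$ on $[t,T]$, take expectations after the usual localization (legitimate since $Y,Y^m\in\mathcal{S}^{2l}$ and $Z,Z^m\in\mathcal{H}^2_d$), estimate the first bracket by $\theta|\delta Y_r|^2$ using that $g_m$ retains the monotonicity constant $\theta$ of Hypothesis \ref{bsdexishu}(iii), and estimate $\delta Y_r\cdot(g_m-g)(r,x_r,Y_r,u_r,v_r)$ by Young's inequality. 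This gives
$$E\big[|\delta Y_t|^2\big]\le(2|\theta|+1)\int_t^T E\big[|\delta Y_r|^2\big]dr+\rho_m(\Xi;u,v),\quad \rho_m(\Xi;u,v):=E\Big[\int_t^T\big|(g_m-g)(r,x_r,Y_r,u_r,v_r)\big|^2dr\Big],$$
whence, by Gr\"onwall's inequality, $E[|\delta Y_t|^2]\le e^{(2|\theta|+1)T}\rho_m(\Xi;u,v)$. Thus it suffices to show $\sup_{(\Xi)\in\mathcal{O}}\sup_{(u,v)}\rho_m(\Xi;u,v)\to0$ as $m\to\infty$.

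Two preliminary observations about the mollification will be used. First, $g_m(\Xi,\cdot,u,v)$ is a nonnegative average of the translates $g(\Xi,\cdot-y',u,v)$, $|y'|\le1/m$, so it inherits Hypothesis \ref{bsdexishu}(iii) with the \emph{same} $\theta$. Second, combining Hypothesis \ref{bsdexishu}(i),(iv), the Lipschitz dependence of $g$ on $x$, and the boundedness of $(r,u,v)\mapsto g(r,0,0,0,u,v)$ on the compact set $[0,T]\times\mathbb{U}\times\mathbb{V}$, there is a constant $C_0$ depending only on the constants in Hypotheses \ref{sdexishu}--\ref{bsdexishu}, on $p$ and $T$, such that
$$|g(r,\xi,\zeta,u,v)|+|g_m(r,\xi,\zeta,u,v)|\le C_0\big(1+|\xi|+|\zeta|^p\big)\qquad(m\ge1).$$

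For the truncation step, fix $R>0$ and split the integrand of $\rho_m$ according to whether $(x_r,Y_r)$ lies in $\overline{B}_R\times[-R,R]$ or not. On this event $(r,x_r,Y_r,u_r,v_r)$ belongs to the compact set $\mathcal{K}_R:=[0,T]\times\overline{B}_R\times[-R,R]\times\mathbb{U}\times\mathbb{V}$, so the corresponding part of $\rho_m$ is at most $T\big(\sup_{\mathcal{K}_R}|g_m-g|\big)^2$, which vanishes as $m\to\infty$ for fixed $R$ by Lemma \ref{xishuyizhi} and does not depend on $(u,v)$ or on $(\Xi)\in\mathcal{O}$. On the complementary event, the growth bound above together with the Cauchy--Schwarz and Markov inequalities gives, for each $r$,
$$E\Big[|(g_m-g)(r,x_r,Y_r,u_r,v_r)|^2\mathbf{1}_{\{|x_r|>R\}\cup\{|Y_r|>R\}}\Big]\le 4C_0^2\Big(E\big[(1+|x_r|+|Y_r|^p)^4\big]\Big)^{1/2}\frac{\big(E[\sup_r|x_r|^4]+E[\sup_r|Y_r|^4]\big)^{1/2}}{R^2}.$$
By the standard moment estimates for SDE \eqref{sdg} (linear growth, uniform in the controls) and by Proposition \ref{yilai} applied with $l$ large enough --- using $|x|\le\sup_{(\Xi)\in\mathcal{O}}|x|<\infty$ and $\sup_{r,u,v}|g(r,0,0,0,u,v)|<\infty$ --- all the moments appearing are bounded by a single constant $\Lambda=\Lambda(\mathcal{O},C,\theta,\eta,p,T)$, independent of $m$, of $(u,v)$, and of $(\Xi)\in\mathcal{O}$. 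Hence $\rho_m(\Xi;u,v)\le T\big(\sup_{\mathcal{K}_R}|g_m-g|\big)^2+C_1\Lambda R^{-2}$ uniformly over $\mathcal{O}$ and over controls; given $\varepsilon>0$, one first picks $R$ so that $C_1\Lambda R^{-2}<\varepsilon/2$ and then $m$ so large that $T(\sup_{\mathcal{K}_R}|g_m-g|)^2<\varepsilon/2$, which proves the lemma, the uniformity in $(u,v)$ being automatic because $\mathbb{U},\mathbb{V}$ are compact.

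I expect the tail estimate to be the only genuinely delicate point: since $g$ is merely continuous --- not Lipschitz --- in $y$, the mollification error $g_m-g$ is controlled only on compact sets, so one must absorb the non-compactness of $(x_r,Y_r)$ by paying with the polynomial-growth Hypothesis \ref{bsdexishu}(iv) and with sufficiently high moments from Proposition \ref{yilai}, and then keep track of the fact that every constant so produced is independent of $m$, of the controls, and of $(\Xi)$ ranging over the compact set $\mathcal{O}$. The It\^o-formula/monotonicity/Gr\"onwall reduction in the first step is routine and parallels Lemma 4.4 of \cite{Pu2018}.
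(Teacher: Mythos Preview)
Your proposal is correct and is precisely the standard argument the paper has in mind: the paper does not give its own proof but refers to Lemma~4.4 of \cite{Pu2018}, and your It\^o--monotonicity--Gr\"onwall reduction followed by a compact/tail splitting of $\rho_m$ (controlled via Lemma~\ref{xishuyizhi} on $\mathcal{K}_R$ and via the polynomial growth plus the uniform moment bounds of Proposition~\ref{yilai} off $\mathcal{K}_R$) is exactly that argument. Two cosmetic points: the displayed inequality should be stated for every $s\in[t,T]$ before invoking Gr\"onwall, and the higher $Y$-moments you use ($E[\sup_r|Y_r|^{4p}]$) come from the $\mathcal{S}^{2l}$-solvability asserted in Proposition~\ref{yilai} rather than from the third displayed estimate there.
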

\begin{lemma}\label{zhiyizhi}
  Suppose Hypotheses \ref{sdexishu} and \ref{bsdexishu} apply, then $W_m\rightarrow W$, $m\rightarrow\infty$, uniformly in any compact set $\mathcal{O}\in[0,T]\times\mathbb{R}^n$.
\end{lemma}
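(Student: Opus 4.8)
The plan is to avoid working directly with the $\essinf$/$\esssup$ appearing in the definitions of $W$ and $W_m$. The estimate of Lemma~\ref{costyizhi} is only an $L^2$ bound, uniform over the control--strategy pair, and such a bound does not survive an essential supremum taken over an uncountable family of controls; this is exactly why the pointwise comparison of cost functionals used in \cite{Pu2018,Zhuo2020} is unavailable in the control-versus-strategy setting. Instead I would first invoke the deterministic representations of Remark~\ref{queding}(a): since $g_m$ fulfills Hypothesis~\ref{bsdexishu} (with constants uniform in $m$) and Hypothesis~\ref{sdexishu} is untouched, that remark applies verbatim to the game driven by $g_m$, giving
$$W(\Xi)=\inf_{\beta\in\mathcal{B}_{t,T}}\sup_{u\in\mathcal{U}_{t,T}}E\big[J(\Xi;u,\beta(u))\big],\qquad W_m(\Xi)=\inf_{\beta\in\mathcal{B}_{t,T}}\sup_{u\in\mathcal{U}_{t,T}}E\big[J_m(\Xi;u,\beta(u))\big].$$
Every quantity here is now an ordinary infimum/supremum of real numbers, and the measurability obstruction disappears.

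For a fixed compact $\mathcal{O}\subset[0,T]\times\mathbb{R}^n$ set
$$\rho_m:=\sup\Big\{\big(E[|Y^{\Xi,m;u,v}_t-Y^{\Xi;u,v}_t|^2]\big)^{1/2}:\ \Xi=(t,x)\in\mathcal{O},\ u\in\mathcal{U}_{t,T},\ v\in\mathcal{V}_{t,T}\Big\},$$
so that $\rho_m\to0$ as $m\to\infty$ by Lemma~\ref{costyizhi}, whence by the Cauchy--Schwarz inequality $|E[J_m(\Xi;u,v)]-E[J(\Xi;u,v)]|\le\rho_m$ for every admissible $(u,v)$ and every $\Xi\in\mathcal{O}$. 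Fix $\Xi\in\mathcal{O}$ and $\varepsilon>0$, and use the infimum in the formula for $W$ to pick $\beta^\varepsilon\in\mathcal{B}_{t,T}$ with $\sup_{u}E[J(\Xi;u,\beta^\varepsilon(u))]\le W(\Xi)+\varepsilon$. Taking $v=\beta^\varepsilon(u)$ in the previous bound yields
$$W_m(\Xi)\le\sup_{u}E\big[J_m(\Xi;u,\beta^\varepsilon(u))\big]\le\sup_{u}\big(E[J(\Xi;u,\beta^\varepsilon(u))]+\rho_m\big)\le W(\Xi)+\varepsilon+\rho_m.$$
Choosing instead $\beta_m^\varepsilon\in\mathcal{B}_{t,T}$ with $\sup_{u}E[J_m(\Xi;u,\beta_m^\varepsilon(u))]\le W_m(\Xi)+\varepsilon$ gives symmetrically $W(\Xi)\le W_m(\Xi)+\varepsilon+\rho_m$. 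Since $\Xi\in\mathcal{O}$ and $\varepsilon>0$ are arbitrary, $\sup_{\Xi\in\mathcal{O}}|W_m(\Xi)-W(\Xi)|\le\varepsilon+\rho_m$, and letting $m\to\infty$ and then $\varepsilon\downarrow0$ finishes the proof.

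The step requiring care --- and the reason this is not a routine extension of \cite{Pu2018,Zhuo2020} --- is that the control $u$ and the strategy value $\beta(u)$ are entangled, so $J(\Xi;u,\beta(u))-J_m(\Xi;u,\beta(u))$ cannot be made small $P$-a.s.\ uniformly in $u$ from Lemma~\ref{costyizhi} alone. The device that repairs this is exactly the passage to the deterministic $\inf$/$\sup$-of-expectations formulas (the ``results in \cite{Buckdahn2008}'') together with the near-optimal strategies $\beta^\varepsilon$, $\beta_m^\varepsilon$, for which the uniformity of Lemma~\ref{costyizhi} over $(u,v)$ is precisely what is needed. The only auxiliary point is that Remark~\ref{queding}(a) and the a priori bounds of Proposition~\ref{yilai} transfer to the mollified problem, which is immediate since $g_m$ satisfies Hypothesis~\ref{bsdexishu}.
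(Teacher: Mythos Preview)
Your proof is correct and follows essentially the same strategy as the paper: both rely on the uniformity over $(u,v)$ in Lemma~\ref{costyizhi} together with near-optimal $\varepsilon$-strategies coming from the results in \cite{Buckdahn2008} recorded in Remark~\ref{queding}. The only difference is that the paper works with the $P$-a.s.\ bounds of Remark~\ref{queding}(b) (with $\delta=T-t$), which forces it to also select near-optimal controls $u^\varepsilon,u_m^\varepsilon$ and then cross-substitute the four objects $\beta^\varepsilon,\beta_m^\varepsilon,u^\varepsilon,u_m^\varepsilon$ before taking expectations, whereas you first pass to the deterministic $\inf/\sup$-of-expectations formula of Remark~\ref{queding}(a) and thereby need only the two strategies $\beta^\varepsilon,\beta_m^\varepsilon$; this is a mild streamlining of the same argument rather than a different route.
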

\begin{proof}
   By means of Remark \ref{queding}, we have for any $\varepsilon>0$:

   \noindent(a) There exist some $\beta_m^{\varepsilon},\beta^\varepsilon\in\mathcal{B}_{t,T}$ s.t. for any $u\in\mathcal{U}_{t,T}$,
  \begin{align}
     W_m&\ge  J_m(\Xi;u,\beta_m^\varepsilon(u))-\varepsilon,\quad P-a.s.,\label{1}\\
     W&\ge J(\Xi;u,\beta^\varepsilon(u))-\varepsilon,\quad P-a.s.;\label{2}
  \end{align}
(b) for any $\beta\in\mathcal{B}_{t,T}$, there exist $u_m^\varepsilon,u^\varepsilon\in\mathcal{U}_{t,T}$ s.t.
\begin{align}
  W_m&\le J_m(\Xi;u_m^\varepsilon,\beta(u_m^\varepsilon))+\varepsilon,\quad P-a.s.,\label{3}\\
  W&\le J(\Xi;u^\varepsilon,\beta(u^\varepsilon))+\varepsilon,\quad P-a.s.\label{4}
\end{align}
Let $\beta=\beta^\varepsilon$ in \eqref{3} and $u=u_m^\varepsilon$ in \eqref{2}, then we get
\begin{equation}\label{5}
  W-W_m\ge J(\Xi;u^\varepsilon_m,\beta^\varepsilon(u^\varepsilon_m))-J_m(\Xi;u^\varepsilon_m,\beta^\varepsilon(u^\varepsilon_m))-2\varepsilon.
\end{equation}
Let $\beta=\beta^\varepsilon_m$ in \eqref{4} and $u=u^\varepsilon$ in \eqref{1}, then we get
\begin{equation}\label{6}
  W-W_m\le J(\Xi;u^\varepsilon,\beta_m^\varepsilon(u^\varepsilon))-J_m(\Xi;u^\varepsilon,\beta_m^\varepsilon(u^\varepsilon))+2\varepsilon.
\end{equation}
Combining \eqref{5} and \eqref{6}, we have
\begin{align}
  &\quad |W-W_m|-4\varepsilon\nonumber\\
  &\le E[|J(\Xi;u^\varepsilon_m,\beta^\varepsilon(u^\varepsilon_m))-J_m(\Xi;u^\varepsilon_m,\beta^\varepsilon(u^\varepsilon_m))|]+E[|J(\Xi;u^\varepsilon,\beta_m^\varepsilon(u^\varepsilon))-J_m(\Xi;u^\varepsilon,\beta_m^\varepsilon(u^\varepsilon))|].
\end{align}
Since $u^\varepsilon_m,\beta^\varepsilon(u^\varepsilon_m)$ and $u^\varepsilon_m,\beta_m^\varepsilon(u^\varepsilon)$ are feasible controls, by Lemma \ref{costyizhi}, for any compact set $\mathcal{O}\subset[0,T]\times\mathbb{R}^n$,
  $$\sup\limits_{(\Xi)\in \mathcal{O}}E[|J(\Xi;u^\varepsilon_m,\beta^\varepsilon(u^\varepsilon_m))-J_m(\Xi;u^\varepsilon_m,\beta^\varepsilon(u^\varepsilon_m))|+|J(\Xi;u^\varepsilon,\beta_m^\varepsilon(u^\varepsilon))-J_m(\Xi;u^\varepsilon,\beta_m^\varepsilon(u^\varepsilon))|]\rightarrow 0,   $$
  $m\rightarrow\infty$. Due to the arbitrariness of $\varepsilon$, we get our result.
\end{proof}
\begin{lemma}\label{hayizhi}
  Suppose Hypotheses \ref{sdexishu} and \ref{bsdexishu} apply. Then $H^-_m\rightarrow H^-$, uniformly in any compact set of their domain.
\end{lemma}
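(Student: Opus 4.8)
The plan is to exploit the elementary fact that $\sup_{u\in\mathbb U}$ and $\inf_{v\in\mathbb V}$ are non-expansive operations, so that the discrepancy between $H^-_m$ and $H^-$ is dominated by the discrepancy between $g_m$ and $g$, and then to invoke Lemma \ref{xishuyizhi}. The point is that the mollification enters the Hamiltonian only through the $g$-term, which in this subsection is independent of $(p,A)$, so the estimate decouples from the unbounded variables.

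First I would record the non-expansiveness estimate. Fix $(\Xi,y,p,A)\in\mathbb H$ and set
$$\Phi(u,v):=\tfrac12\Tr(\sigma\sigma^{T}(\Xi,u,v)A)+\langle p,b(\Xi,u,v)\rangle,$$
so that the bracketed expressions defining $H^-$ and $H^-_m$ are $\Phi(u,v)+g(\Xi,y,u,v)$ and $\Phi(u,v)+g_m(\Xi,y,u,v)$. Since $b,\sigma$ are continuous and $\mathbb U,\mathbb V$ are compact, $\Phi$ is bounded on $\mathbb U\times\mathbb V$, and $g,g_m$ are bounded there as well, so all the suprema and infima below are finite. For bounded $f_1,f_2$ on $\mathbb U\times\mathbb V$ one has $|\inf_v f_1(u,v)-\inf_v f_2(u,v)|\le\sup_v|f_1(u,v)-f_2(u,v)|$ and $|\sup_u f_1(u)-\sup_u f_2(u)|\le\sup_u|f_1(u)-f_2(u)|$; applying these successively to the inner infimum and the outer supremum gives
$$|H^-_m(\Xi,y,p,A)-H^-(\Xi,y,p,A)|\le\sup_{(u,v)\in\mathbb U\times\mathbb V}|g_m(\Xi,y,u,v)-g(\Xi,y,u,v)|.$$

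Next I would reduce to a compact set on which Lemma \ref{xishuyizhi} applies. Given a compact set $\mathcal O\subset\mathbb H$, its projection onto the $(\Xi,y)=(t,x,y)$-coordinates is a compact subset $\mathcal O_0\subset[0,T]\times\mathbb R^n\times\mathbb R$, hence $\mathcal O':=\mathcal O_0\times\mathbb U\times\mathbb V$ is a compact subset of $[0,T]\times\mathbb R^n\times\mathbb R\times\mathbb U\times\mathbb V$. Combining the displayed bound with the definition of $\mathcal O'$,
$$\sup_{(\Xi,y,p,A)\in\mathcal O}|H^-_m(\Xi,y,p,A)-H^-(\Xi,y,p,A)|\le\sup_{(\Xi,y,u,v)\in\mathcal O'}|g_m(\Xi,y,u,v)-g(\Xi,y,u,v)|,$$
and the right-hand side tends to $0$ as $m\to\infty$ by Lemma \ref{xishuyizhi}. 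This yields the uniform convergence of $H^-_m$ to $H^-$ on $\mathcal O$, and the symmetric argument gives $H^+_m\to H^+$.

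There is no genuine obstacle here; the only step requiring a moment's care is the observation that the estimate is allowed to be taken over a compact set that does \emph{not} involve the unbounded variables $p$ and $A$ — this is precisely because $g$ (and $g_m$) depends only on $(\Xi,y,u,v)$ in the $z$-free setting, so the $\Phi$-terms cancel exactly and what remains is exactly the object controlled by Lemma \ref{xishuyizhi}.
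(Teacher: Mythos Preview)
Your proposal is correct and follows essentially the same route as the paper: the paper also reduces $|H^-_m-H^-|$ to $\sup_{u,v}|g_m-g|$ via the non-expansiveness of $\sup_u$ and $\inf_v$ (carried out there by two one-sided inequalities rather than your single Lipschitz statement), and then projects the compact set in $\mathbb H$ onto the $(\Xi,y)$-coordinates, multiplies by $\mathbb U\times\mathbb V$, and invokes Lemma~\ref{xishuyizhi}.
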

\begin{proof}
  Let $$\mathcal{H}_m=\frac{1}{2}\Tr(\sigma {{\sigma }^{T}}(\Xi,u,v)A)+\langle p,b(\Xi,u,v)\rangle+g_m(\Xi,y,u,v)$$
  and
  $$\mathcal{H}=\frac{1}{2}\Tr(\sigma {{\sigma }^{T}}(\Xi,u,v)A)+\langle p,b(\Xi,u,v)\rangle+g(\Xi,y,u,v).$$
  Then\begin{align*}
     H_m^--H^-
    &=\sup\limits_{u\in\mathbb{U}}\inf\limits_{v\in\mathbb{V}}\mathcal{H}_m-\sup\limits_{u\in\mathbb{U}}\inf\limits_{v\in\mathbb{V}}\mathcal{H}\\
    &\le \sup\limits_{u\in\mathbb{U}}(\inf\limits_{v\in\mathbb{V}}\mathcal{H}_m-\inf\limits_{v\in\mathbb{V}}\mathcal{H})\\
    &\le \sup\limits_{u\in\mathbb{U}}\sup\limits_{v\in\mathbb{V}}(\mathcal{H}_m-\mathcal{H})\\
    &\le \sup\limits_{u\in\mathbb{U},v\in\mathbb{V}}|\mathcal{H}_m-\mathcal{H}|\\
    &=\sup\limits_{u\in\mathbb{U},v\in\mathbb{V}}|g_m-g|
  \end{align*}
  and\begin{align*}
    H^--H_m^-
    &=\sup\limits_{u\in\mathbb{U}}\inf\limits_{v\in\mathbb{V}}\mathcal{H}-\sup\limits_{u\in\mathbb{U}}\inf\limits_{v\in\mathbb{V}}\mathcal{H}_m\\
    &\le\sup\limits_{u\in\mathbb{U},v\in\mathbb{V}}|g_m-g|.
  \end{align*}
 Thus
 $$|H^--H_m^-|\le \sup\limits_{u\in\mathbb{U},v\in\mathbb{V}}|g_m-g|.$$
 For arbitrary  $\mathcal{O}$ compact in $\mathbb{H}$ and $(\Xi,y,p,A,u,v)\in \mathcal{O}\times\mathbb{U}\times\mathbb{V}$, due to that $(\Xi,y,u,v)\in \widetilde{\mathcal{O}}$ and $\widetilde{\mathcal{O}}$ is a compact set in $[0,T]\times\mathbb{R}^n\times\mathbb{R}\times\mathbb{U}\times\mathbb{V}$, by Lemma \ref{xishuyizhi},
 \begin{align*}
  &\quad \sup\limits_{(\Xi,y,p,A)\in \mathcal{O}}|H_m^--H^-|\\
  &\le \sup\limits_{(\Xi,y,p,A)\in \mathcal{O}}\sup\limits_{u\in\mathbb{U},v\in\mathbb{V}}|g_m-g|\\
  &\le\sup\limits_{(\Xi,y,u,v)\in \widetilde{\mathcal{O}}}|g_m-g|\rightarrow 0,\quad m\rightarrow\infty.
 \end{align*}
The proof is completed.
\end{proof}
 Now bring up  stability nature of viscosity solution (such as Lemma 6.2 of \cite{Fleming2006}), which provides  foundation to connect $W$ to  HJBI equation \eqref{hjbi}.
\begin{prop}
\label{wending}Let $W_m$ be a viscosity subsolution (resp., supersolution) to the following PDE
$$\partial_tW_m+H_m^-(\Xi,W_m,DW_m,D^2W_m)=0,\quad (\Xi)\in[0,T]\times\mathbb{R}^n,$$
where $H_m^-(\Xi,y,p,A):\mathbb{H}\rightarrow\mathbb{R}$ is a continuous function satisfying the ellipticity setting
\begin{equation}\label{tuoyuan}
  H_m^-(\Xi,y,p,X)\le H_m^-(\Xi,y,p,Y), \quad X\le Y.
\end{equation}
Suppose $H_m^-\rightarrow H^-$, $W_m\rightarrow W$, $m\rightarrow\infty$,  uniformly in any compact set of their  domains. Then $W$ is a viscosity subsolution (resp., supersolution) of the PDE
$$\partial_tW+H^-(\Xi,W,DW,D^2W)=0,\quad (\Xi)\in[0,T]\times\mathbb{R}^n.$$
\end{prop}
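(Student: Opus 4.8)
The plan is to run the classical stability argument for viscosity solutions (in the spirit of Lemma 6.2 of \cite{Fleming2006}), adapted to the present parabolic setting and to the $C^3_b$ test-function class of Definition \ref{nxjdingyi}. I treat only the subsolution case; the supersolution case follows by reversing every inequality. The terminal condition is immediate: since $W_m(T,x)\le h(x)$ for all $m$ and all $x$, and $W_m(T,\cdot)\to W(T,\cdot)$ pointwise (a fortiori from the locally uniform convergence), letting $m\to\infty$ gives $W(T,x)\le h(x)$ for every $x\in\mathbb{R}^n$. For the interior inequality, fix $\phi\in C^3_b([0,T]\times\mathbb{R}^n)$ and a point $(\bar t,\bar x)\in[0,T]\times\mathbb{R}^n$ at which $W-\phi$ has a local maximum. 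Replacing $\phi$ by $\phi+\chi\cdot|(t,x)-(\bar t,\bar x)|^4$, where $\chi$ is a smooth cut-off equal to $1$ near $(\bar t,\bar x)$ and supported in a small ball, one may assume the maximum is \emph{strict} on a closed set $\bar B:=\bar B_r(\bar t,\bar x)\cap([0,T]\times\mathbb{R}^n)$; this modification keeps $\phi$ in $C^3_b$ and alters neither $\phi$, $\partial_t\phi$, $D\phi$ nor $D^2\phi$ at $(\bar t,\bar x)$.

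Next I extract approximating contact points. Let $(t_m,x_m)$ be a maximiser of $W_m-\phi$ over the compact set $\bar B$. Because $W_m-\phi\to W-\phi$ uniformly on $\bar B$ and the maximum of $W-\phi$ on $\bar B$ is attained only at $(\bar t,\bar x)$, every subsequential limit of $(t_m,x_m)$ must equal $(\bar t,\bar x)$; hence $(t_m,x_m)\to(\bar t,\bar x)$, and for all $m$ large the point $(t_m,x_m)$ lies in the interior of $\bar B$, so that $W_m-\phi$ attains a local maximum at $(t_m,x_m)$.

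Applying the viscosity subsolution property of $W_m$ with test function $\phi$ at the point $(t_m,x_m)$ yields
$$\partial_t\phi(t_m,x_m)+H_m^-\big(t_m,x_m,\phi(t_m,x_m),D\phi(t_m,x_m),D^2\phi(t_m,x_m)\big)\ge 0.$$
Now let $m\to\infty$. Since $\phi\in C^3_b$, the functions $\phi,\partial_t\phi,D\phi,D^2\phi$ are continuous, and along the convergent sequence $(t_m,x_m)$ the arguments $\big(t_m,x_m,\phi(t_m,x_m),D\phi(t_m,x_m),D^2\phi(t_m,x_m)\big)$ stay in a fixed compact subset $\mathcal{K}$ of $\mathbb{H}$ and converge to $\big(\bar t,\bar x,\phi(\bar t,\bar x),D\phi(\bar t,\bar x),D^2\phi(\bar t,\bar x)\big)$. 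As $H_m^-\to H^-$ uniformly on $\mathcal{K}$ and $H^-$, being a locally uniform limit of continuous functions, is itself continuous, the composite term converges and one obtains
$$\partial_t\phi(\bar t,\bar x)+H^-\big(\bar t,\bar x,\phi(\bar t,\bar x),D\phi(\bar t,\bar x),D^2\phi(\bar t,\bar x)\big)\ge 0,$$
which is precisely the inequality of Definition \ref{nxjdingyi}(i) at $(\bar t,\bar x)$. Hence $W$ is a viscosity subsolution; the supersolution statement is obtained by the symmetric argument. (The ellipticity \eqref{tuoyuan} is not actually used in this passage to the limit; it is recorded only because it belongs to the standing structural assumptions under which the notion of viscosity solution is used.)

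\textbf{Main obstacle.} The only genuinely delicate step is the extraction of the contact points $(t_m,x_m)$ together with the proof that they converge to $(\bar t,\bar x)$ and eventually lie in the interior of $\bar B$; this forces the preliminary reduction to a \emph{strict} local maximum, which in turn must be carried out without leaving the class $C^3_b$, whence the localised quartic perturbation above. Everything else is continuity of $\phi$ and of $H^-$ together with the two locally uniform convergences supplied by hypothesis (and verified, in the application to \eqref{hjbi}, in Lemmas \ref{zhiyizhi} and \ref{hayizhi}).
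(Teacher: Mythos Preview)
Your argument is correct; it is the standard stability proof for viscosity solutions. The paper does not supply its own proof of this proposition but merely cites Lemma~6.2 of \cite{Fleming2006}, and your write-up is exactly that classical argument adapted to the $C^3_b$ test-function convention of Definition~\ref{nxjdingyi}, so your approach coincides with what the paper invokes.
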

We now begin to demonstrate the main results of this research. First we illustrate the existence of a viscosity solution to HJBI equation \eqref{hjbi}.
\begin{theorem}\label{cunzai}
  Suppose Hypotheses \ref{sdexishu} and \ref{bsdexishu} apply. Then $W$ is a viscosity solution to HJBI equation \eqref{hjbi}.
\end{theorem}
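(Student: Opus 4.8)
The plan is to obtain $W$ as the locally uniform limit of the regularized lower value functions $W_m$ from \eqref{moguangzhi}, to show that each $W_m$ is a viscosity solution of the HJBI equation built from $g_m$, and then to transfer the viscosity property to $W$ by the stability result of Proposition~\ref{wending}. For the first point, note that for every fixed $m$ the generator $g_m$ satisfies Hypothesis~\ref{bsdexishu}, is independent of $z$, and is in addition $C^\infty$ (hence locally Lipschitz) in $y$; consequently the well-posedness and estimates of Proposition~\ref{yilai}, the continuity and linear growth in $x$ of Proposition~\ref{xlianxu}, and the dynamic programming principle of Theorem~\ref{dpp} all hold with $g$ replaced by $g_m$ and $W$ replaced by $W_m$. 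I would then run the standard derivation of the viscosity-solution property from the DPP (as in \cite[Theorem~4.3]{Buckdahn2008}): for $\phi\in C^3_b([0,T]\times\mathbb{R}^n)$ with $W_m-\phi$ attaining a local maximum (resp.\ minimum) at $(\Xi)$, one localizes by a stopping time, uses $W_m\ge\phi$ (resp.\ $\le$) together with the comparison theorem for BSDEs with monotone generators (\cite{Fan2012}), applies the DPP of Theorem~\ref{dpp} over $[t,t+\delta]$, and expands $G^{\Xi;u,v}_{t,t+\delta}[\phi(t+\delta,x^{\Xi;u,v}_{t+\delta})]$ to first order in $\delta$ by It\^{o}'s formula --- the leading term being $\delta$ times the integrand of $H^-_m$ in \eqref{moguangha} evaluated at $(\Xi,\phi(\Xi),D\phi(\Xi),D^2\phi(\Xi))$ and $(u,v)$, plus $o(\delta)$ uniformly over the compact set $\mathbb{U}\times\mathbb{V}$; dividing by $\delta$ and letting $\delta\downarrow0$ yields $\partial_t\phi+H^-_m(\Xi,\phi,D\phi,D^2\phi)\ge0$ (resp.\ $\le0$). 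Since $W_m(T,x)=h(x)$ by definition, this shows $W_m$ is a viscosity solution of $\partial_tW_m+H^-_m(\Xi,W_m,DW_m,D^2W_m)=0$ on $[0,T)\times\mathbb{R}^n$.

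For the passage to the limit I would first record that $H^-_m$ is continuous on $\mathbb{H}$ and satisfies the degeneracy condition \eqref{tuoyuan}: if $X\le Y$ in $\mathbb{S}^n$ then $\Tr(\sigma\sigma^T(\Xi,u,v)X)\le\Tr(\sigma\sigma^T(\Xi,u,v)Y)$ because $\sigma\sigma^T\ge0$, and this inequality is preserved by $\inf_{v\in\mathbb{V}}$ and then $\sup_{u\in\mathbb{U}}$. By Lemma~\ref{zhiyizhi}, $W_m\to W$ uniformly on compact subsets of $[0,T]\times\mathbb{R}^n$, and by Lemma~\ref{hayizhi}, $H^-_m\to H^-$ uniformly on compact subsets of $\mathbb{H}$; hence Proposition~\ref{wending}, applied once in the subsolution case and once in the supersolution case, shows that $W$ is both a viscosity subsolution and a viscosity supersolution of $\partial_tW+H^-(\Xi,W,DW,D^2W)=0$ on $[0,T)\times\mathbb{R}^n$. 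Finally, passing to the limit in $W_m(T,x)=h(x)$ gives $W(T,x)=h(x)$, so in particular $W(T,\cdot)\le h$ and $W(T,\cdot)\ge h$; by Definition~\ref{nxjdingyi}, $W$ is a viscosity solution of HJBI equation \eqref{hjbi}.

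The step I expect to be the main obstacle is showing that each $W_m$ is a viscosity solution of the $g_m$-regularized equation: because $g_m$ only inherits the monotonicity and polynomial growth of $g$ in $y$ (it is \emph{not} globally Lipschitz in $y$, despite being smooth), one cannot simply quote the Lipschitz-generator results of \cite{Buckdahn2008,Wu2008}. One has to check that the first-order expansion of $G^{\Xi;u,v}_{t,t+\delta}$ is still valid and uniform in $(u,v)$ in this monotone setting --- this is done exactly as in the proof of the general DPP (Theorem~\ref{dpp}), tracking the constants $\theta,\eta,p$ through the a priori bounds of Proposition~\ref{yilai} and the continuous-dependence estimate underlying Lemma~\ref{costyizhi}, and using that the computation is local at $(\Xi)$ with the state moments controlled by Hypothesis~\ref{sdexishu}. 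Everything afterwards --- the ellipticity check, the invocation of Lemmas~\ref{zhiyizhi} and~\ref{hayizhi}, the stability passage of Proposition~\ref{wending}, and the terminal condition --- is routine.
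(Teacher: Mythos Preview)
Your approach differs from the paper's in one essential respect: the paper does not attempt the DPP-to-viscosity expansion for the monotone $g_m$ directly, but introduces a second, outer approximation layer. In Step~1 it works under the \emph{additional} hypothesis that $|g(\cdot,0,\cdot,\cdot)|$ is uniformly bounded on $[0,T]\times\mathbb{R}^n\times\mathbb{U}\times\mathbb{V}$, and argues that under this extra assumption the mollification $g_m$ is in fact globally Lipschitz in $y$ (with an $m$-dependent constant), so that Theorem~4.2 of \cite{Buckdahn2008} applies verbatim and $W_m$ is a viscosity solution of the $g_m$-equation; Lemmas~\ref{xishuyizhi}--\ref{hayizhi} and Proposition~\ref{wending} then give the result for $W$ under this extra hypothesis. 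In Step~2 the boundedness hypothesis is removed by truncation: one sets $\bar g_k(\cdot,y,\cdot,\cdot):=g(\cdot,y,\cdot,\cdot)-g(\cdot,0,\cdot,\cdot)+\Lambda_k(g(\cdot,0,\cdot,\cdot))$ with $\Lambda_k(x)=\tfrac{\min(k,|x|)}{|x|}x$, so $|\bar g_k(\cdot,0,\cdot,\cdot)|\le k$; Step~1 applies to each $\bar g_k$, and a second stability passage (after checking $\bar g_k\to g$, $\bar W_k\to W$, $\bar H^-_k\to H^-$ locally uniformly, by the same mechanisms as Lemmas~\ref{xishuyizhi}--\ref{hayizhi}) finishes the proof.

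The truncation you omit is thus the paper's key device: it is exactly what permits reduction to the Lipschitz theory of \cite{Buckdahn2008} and thereby sidesteps the ``main obstacle'' you flag. What the paper's route buys is modularity---no need to rework the first-order expansion of $G^{\Xi;u,v}_{t,t+\delta}$ at all. What your route buys is a single approximation layer, but at the cost of carrying out that expansion in the non-Lipschitz setting. Your scheme is workable in principle, but note a redundancy: if your monotone-generator expansion really goes through for $g_m$ using only the a~priori bounds of Proposition~\ref{yilai} and the locality at $(\Xi)$, essentially the same argument would go through for $g$ itself (mollification only upgrades continuity in $y$ to local Lipschitz continuity, which is not the missing ingredient once test functions are bounded), so the mollification in your plan does less work than you may expect.
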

\begin{proof}
{\bf Step 1.} Suppose that $|g(\cdot,0,\cdot,\cdot)|$ is uniformly bounded on $[0,T]\times\mathbb{R}^n\times\mathbb{U}\times\mathbb{V}$. Then we have the Lipschitzian property of $g_m$. Indeed, for arbitrary $(\Xi,u,v)\in[0,T]\times\mathbb{R}^n\times\mathbb{U}\times\mathbb{V}$, $y,\widetilde{y}\in\mathbb{R}$,
\begin{align*}
  |g_m(\cdot,y,\cdot,\cdot)-g_m(\cdot,\widetilde{y},\cdot,\cdot)|&=|\int_{[-\frac{1}{m},\frac {1}{m}]}g(\cdot,y',\cdot,\cdot)(\zeta_m(y-y')-\zeta_m(\widetilde{y}-y'))dy'|\\
  &\le\max\limits_{y'\in[-\frac{1}{m},\frac{1}{m}]}|g(\cdot,y',\cdot,\cdot)|\int_{[-\frac{1}{m},\frac{1}{m}]}|\zeta_m(y-y')-\zeta_m(\widetilde{y}-y')|dy'\\
  &\overset{(c)}\le\max\limits_{y'\in[-\frac{1}{m},\frac{1}{m}]}(g|(\cdot,0,\cdot,\cdot)|+\eta(1+|y'|^p))\int_{[-\frac{1}{m},\frac{1}{m}]}C|y-\widetilde{y}|dy'\\
  &\le C|y-\widetilde{y}|,
\end{align*}
where (c) is by (iv) in Hypothesis \ref{bsdexishu} and the Lipschitzian property of $\zeta_m$. Then thanks to Theorem 4.2 in \cite{Buckdahn2008}, $W_m$ is a viscosity solution to the HJBI equation
\begin{equation}
  \left\{\begin{array}{ll}
  \partial _{t}W_m+H^{-}_m(\Xi,W_m,DW_m,D^{2}W_m)=0,\quad    (\Xi)\in [0,T)\times \mathbb{R}^n,  \\\\
   W_m|_{t=T}=h(x),\quad x\in \mathbb{R}^n,
\end{array} \right.
\end{equation}
where $W_m$ is defined in \eqref{moguangzhi} and $H^-_m$ in \eqref{moguangha}.

By virtue of Lemmas \ref{xishuyizhi}-\ref{hayizhi}, with $W_m$ fulling  ellipticity condition \eqref{tuoyuan}, we get by Proposition \ref{wending}, that $W$ is a viscosity solution to  limit equation \eqref{hjbi}.

{\bf Step 2.} $|g(\cdot,0,\cdot,\cdot)|$ is not necessarily uniformly bounded on $[0,T]\times\mathbb{R}^n\times\mathbb{U}\times\mathbb{V}$.

Set up a sequence of  cutoff functions
$$\overline{g}_k(\cdot,y,\cdot,\cdot):=g(\cdot,y,\cdot,\cdot)-g(\cdot,0,\cdot,\cdot)+\Lambda_k(g(\cdot,0,\cdot,\cdot)),\quad k\in\mathbb{N},$$
where $\Lambda_k(x)=\frac{\inf(k,|x|)}{|x|}x$.  Then it is straightforward to check that $\overline{g}_k$ satisfies Hypothesis \ref{bsdexishu}. Hence we can construct a series of
BSDEs
\begin{equation}\label{jieduanbsde}
 \overline{Y}_{s}^{\Xi,k;u,v}=h(x_{T}^{\Xi;u,v})+\int_{s}^{T}\overline{g}_k(r,x_{r}^{\Xi;u,v},\overline{Y}_{r}^{\Xi,k;u,v},u_r,v_r)dr-\int_{s}^{T}\overline{Z}_r^{\Xi,k;u,v}dB_{r},\quad t\le s\le T,
\end{equation}
with the cost function $$\overline{J}_k(\Xi;u,v):=\overline{Y}_{t}^{\Xi,k;u,v},$$
  value function $$\overline{W}_k:=\mathop{\essinf }\limits_{\beta \in {{\mathcal{B}}_{t,T}}}\mathop{\esssup }\limits_{u\in {{\mathcal{U}}_{t,T}}}\overline{J}_k(\Xi;u,\beta (u)),$$
 and  Hamiltonian
 \begin{align}&\overline{H}_{k}^{-}(\Xi,y,p,A)\nonumber\\
 =\:\;&\underset{u\in \mathbb{U}}{\mathop{\sup }}\,\underset{v\in \mathbb{V}}{\mathop{\inf }}\,\{\frac{1}{2}\Tr(\sigma {{\sigma }^{T}}(\Xi,u,v)A)+\langle p, b(\Xi,u,v)\rangle+\overline{g}_k(\Xi,y,u,v)\},\nonumber
\end{align}
for $(\Xi,y,p,A)\in\mathbb{H}.$
Since $\overline{g}_k(\cdot,0,\cdot,\cdot)=\Lambda_k(g(\cdot,0,\cdot,\cdot))$, $\overline{g}_k(\cdot,0,\cdot,\cdot)$ is uniformly bounded. Thus $\overline{g}_k$ satisfies the condition in Step 1, by which $\overline{W}_k$ is a viscosity solution to the HJBI equation
\begin{equation}
  \left\{\begin{array}{ll}
  \partial _{t}\overline{W}_k+\overline{H}^{-}_k(\Xi,\overline{W}_k,D\overline{W}_k,D^{2}\overline{W}_k)=0,\quad    (\Xi)\in [0,T)\times \mathbb{R}^n,  \\\\
   \overline{W}_k|_{t=T}=h(x),\quad x\in \mathbb{R}^n.
\end{array} \right.
\end{equation}
Then we prove that $\overline{g}_k\rightarrow g$, $\overline{Y}_{t}^{\Xi,k;u,v}\rightarrow Y_{t}^{\Xi;u,v}$, $\overline{W}_k\rightarrow W$ and $\overline{H}^-_k\rightarrow H^-$, uniformly in any compact set of their  domains as $k\rightarrow\infty$, where only the proof for $\overline{g}_k\rightarrow g$ as $k\rightarrow\infty$  needs to be altered from Lemma \ref{xishuyizhi} by cause of the difference between $\overline{g}_k$ and $g$, whereas the rest can be dealt with alike as Lemmas \ref{costyizhi}-\ref{hayizhi}.

As a matter of fact, for any  $\mathcal{O}$ compact in $[0,T]\times\mathbb{R}^n\times\mathbb{R}\times\mathbb{U}\times\mathbb{V}$ and for any $(\Xi,y,u,v)\in \mathcal{O}$, $g$ is bounded by a positive integer $M_\mathcal{O}$ in light of the continuity of $g$. Thus when $k\ge M_\mathcal{O}$,
$$\sup\limits_{(\Xi,y,u,v)\in \mathcal{O}}|\overline{g}_k(\Xi,y,u,v)-g(\Xi,y,u,v)|=\sup\limits_{(\Xi,y,u,v)\in \mathcal{O}}|g(\Xi,0,u,v)-\Lambda_k(g(\Xi,0,u,v))|=0,$$
which indicates that $\overline{g}_k\rightarrow g$ uniformly in every compact set of their domain as $k\rightarrow\infty$.

Employing one more time Proposition \ref{wending}, we get our results.\end{proof}
The unique weak solvability of   \eqref{hjbi} in  viscosity sense is discussed in
\begin{align*}
 	\Gamma &:= \big\{ W\in C([0,T]\times\mathbb{R}^n)|\text{ there  exists } \Upsilon>0, \text{ s.t. }\\
 	&\phantom{=\;\;}\lim_{|x|\rightarrow\infty}We^{-\Upsilon[\log(|x|^2+1)^{\frac{1}{2}}]^2}=0, \text{ uniformly in } t\in[0,T] \big\},
\end{align*}
initiated by Barles et al. \cite{Barles1997}. We begin  utilizing  comparison theorems of bounded viscosity solutions and then generalizing them to unbounded cases by a reference function to prove our results.

First we give the comparison  theorem of bounded viscosity solutions.
\begin{lemma}\label{bijiao}
Suppose Hypotheses \ref{sdexishu} and \ref{bsdexishu} apply. $W_1$ bounded from above is a viscosity subsolution to HJBI equation \eqref{hjbi} along with $W_2$   a viscosity supersolution to \eqref{hjbi} bounded from below. Then  $W_1\le W_2$ on  $(0,T]\times\mathbb{R}^n$.
\end{lemma}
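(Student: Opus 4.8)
The plan is to run the classical Crandall--Ishii doubling-of-variables comparison argument, with a time-exponential spatial weight to tame the unbounded state space and the linear growth of $b$ and $\sigma$.

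\textbf{Step 1 (removing the monotonicity constant).} Since $g$ enters the inner expression of $H^-$ additively and $\sup_u\inf_v$ preserves additive one-sided bounds, Hypothesis~\ref{bsdexishu}(iii) yields $H^-(\Xi,y,p,A)-H^-(\Xi,\widetilde y,p,A)\le\theta(y-\widetilde y)$ whenever $y\ge\widetilde y$. Fixing $\gamma>\theta$ and setting $\widehat W_i(t,x):=e^{\gamma t}W_i(t,x)$, one checks that $\widehat W_1$ stays bounded above, $\widehat W_2$ bounded below, the terminal ordering $\widehat W_1(T,\cdot)\le e^{\gamma T}h\le\widehat W_2(T,\cdot)$ persists, $\widehat W_1\le\widehat W_2\iff W_1\le W_2$, and $\widehat W_1$ (resp.\ $\widehat W_2$) is a viscosity sub- (resp.\ super-) solution of $\partial_t\widehat W+\widehat H^-(\Xi,\widehat W,D\widehat W,D^2\widehat W)=0$ with
\[
\widehat H^-(\Xi,y,p,A)=-\gamma y+\underset{u\in\mathbb U}{\sup}\,\underset{v\in\mathbb V}{\inf}\Big\{\tfrac12\Tr(\sigma\sigma^T(\Xi,u,v)A)+\langle p,b(\Xi,u,v)\rangle+e^{\gamma t}g(\Xi,e^{-\gamma t}y,u,v)\Big\}.
\]
This $\widehat H^-$ is still degenerate elliptic and Lipschitz in $x$, but now strictly decreasing in the $y$-slot with one-sided constant $\theta-\gamma<0$; equivalently, after relabelling, the generator in \eqref{hjbi} may be assumed to satisfy Hypothesis~\ref{bsdexishu} with $\theta<0$.

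\textbf{Step 2 (penalization).} Suppose, for contradiction, that $m:=\sup_{(0,T]\times\mathbb R^n}(W_1-W_2)>0$, which is finite because $W_1$ is bounded above and $W_2$ below. Put $\langle x\rangle:=(1+|x|^2)^{1/2}$ and $\chi(t,x):=e^{\lambda(T-t)}\langle x\rangle$; the linear growth of $b,\sigma$ inherited from Hypothesis~\ref{sdexishu} gives $\langle D\langle x\rangle,b(\Xi,u,v)\rangle+\tfrac12\Tr(\sigma\sigma^T(\Xi,u,v)D^2\langle x\rangle)\le C'\langle x\rangle$, so that for $\lambda\ge C'$
\[
\partial_t\chi+\langle D\chi,b(\Xi,u,v)\rangle+\tfrac12\Tr(\sigma\sigma^T(\Xi,u,v)D^2\chi)\le0,\qquad\forall(\Xi,u,v).
\]
For $\varepsilon>0$ consider $\Psi(t,x,y):=W_1(t,x)-W_2(t,y)-\tfrac{|x-y|^2}{2\varepsilon}-\alpha\chi(t,x)-\alpha\chi(t,y)-\tfrac{\kappa}{t}$ over $(0,T]\times\mathbb R^n\times\mathbb R^n$. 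As $W_1-W_2$ is bounded above and the penalties drive $\Psi$ to $-\infty$ when $|x|+|y|\to\infty$ or $t\to0^+$, $\Psi$ attains a maximum at some $(\bar t,\bar x,\bar y)$. Choosing $\kappa,\alpha$ small in terms of $m$ and a near-maximizer $(t_0,x_0)$ of $W_1-W_2$ makes $\Psi(\bar t,\bar x,\bar y)\ge\Psi(t_0,x_0,x_0)>0$, hence $y_1:=W_1(\bar t,\bar x)>W_2(\bar t,\bar y)=:y_2$; and $W_1(T,\cdot)\le h\le W_2(T,\cdot)$, the Lipschitz property of $h$, and $Cr-r^2/2\varepsilon\le C^2\varepsilon/2$ force $\bar t<T$ once $\varepsilon$ is small, while $\bar t>0$ automatically. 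So $(\bar t,\bar x,\bar y)$ is an interior maximum.

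\textbf{Step 3 (the two inequalities and the contradiction).} Since $W_1,W_2$ are continuous, Definition~\ref{nxjdingyi} is equivalent to the parabolic-semijet formulation, so the maximum principle for semicontinuous functions (see, e.g., \cite{Fleming2006}) applies at $(\bar t,\bar x,\bar y)$: there are reals $c_1,c_2$ with $c_1-c_2=\partial_t\big[\alpha\chi(\cdot,\bar x)+\alpha\chi(\cdot,\bar y)+\kappa/\cdot\big]\big|_{\bar t}$ and $X,Y\in\mathbb S^n$ with the usual matrix estimate controlling $X$ and $-Y$ in terms of $\tfrac1\varepsilon$ and of $\alpha D^2\chi$, such that $(c_1,\tfrac{\bar x-\bar y}{\varepsilon}+\alpha D\chi(\bar t,\bar x),X)$ is in the parabolic superjet of $W_1$ at $(\bar t,\bar x)$ and $(c_2,\tfrac{\bar x-\bar y}{\varepsilon}-\alpha D\chi(\bar t,\bar y),Y)$ in the subjet of $W_2$ at $(\bar t,\bar y)$. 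Inserting these into the sub- and supersolution inequalities for \eqref{hjbi} and subtracting, and estimating $H^-$ at $(\bar t,\bar x)$ minus $H^-$ at $(\bar t,\bar y)$ by the $\sup_{u,v}$ of the difference of the inner expressions, I would bound: the second-order part by $\tfrac C\varepsilon|\bar x-\bar y|^2$ via the matrix inequality and the Lipschitz property of $\sigma$; the first-order part by $\tfrac C\varepsilon|\bar x-\bar y|^2$ via that of $b$, while the $\alpha D\chi$- and $\alpha D^2\chi$-contributions, together with the time-slope term $\alpha\partial_t\chi$, are $\le0$ by the choice of $\lambda$; and the $g$-part by $\theta(y_1-y_2)+C|\bar x-\bar y|\le C|\bar x-\bar y|$, using $y_1>y_2$, the now-negative monotonicity constant, and the Lipschitz property of $g$ in $x$. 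The outcome is $-\kappa/\bar t^2+\tfrac C\varepsilon|\bar x-\bar y|^2+C|\bar x-\bar y|\ge0$. Along a subsequence $\varepsilon\to0$ (the $\alpha$- and $\kappa/t$-penalties keeping $\bar x,\bar y$ bounded and $\bar t$ away from $0$) the standard estimate gives $\tfrac1\varepsilon|\bar x-\bar y|^2\to0$ and $|\bar x-\bar y|\to0$; since $\kappa/\bar t^2\ge\kappa/T^2>0$, this is a contradiction, so $m\le0$, i.e.\ $W_1\le W_2$ on $(0,T]\times\mathbb R^n$.

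\textbf{Main obstacle.} The one genuinely delicate ingredient is the interplay between the unbounded state space and the merely linear (not bounded) growth of $b,\sigma$: with a naive penalty $\alpha\langle x\rangle$ one is left with residual $O(\alpha\langle\bar x\rangle)$ terms in the Hamiltonian comparison that must be killed, either by separately proving $\alpha\langle\bar x\rangle\to0$ along the maximizers or, as above, by folding the factor $e^{\lambda(T-t)}$ into the penalty so those terms carry the right sign from the start. By contrast, the monotone-but-non-Lipschitz dependence of $g$ on $y$ is benign here, since $g$ is independent of $z$ in \eqref{hjbi} and $W_1,W_2$ are continuous, so the one-sided inequality of Hypothesis~\ref{bsdexishu}(iii) — after the reduction to $\theta<0$ — is all that is used; the polynomial growth (iv) plays no role in this lemma.
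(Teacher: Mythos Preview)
Your proposal is correct and follows the standard Crandall--Ishii doubling-of-variables comparison argument, which is precisely the route the paper takes by deferring to Proposition~5.1 of \cite{Zhuo2020}; in particular your Step~1 reduction to a negative monotonicity constant matches the paper's own remark that $\theta$ may be transformed to the non-positive case. The paper provides no further details beyond that citation, so your write-up is in fact more complete than the paper's own treatment of this lemma.
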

 Analogous arguments to Proposition 5.1 in \cite{Zhuo2020} get our result, thus are omitted. Note that in \cite{Zhuo2020}, $\theta$ was generally chosen to be non-positive, otherwise the verification was simply  transformed to the former circumstance.

Subsequently introduce  reference function results of Lemma 5.1 in \cite{Buckdahn2008}.
\begin{lemma}\label{zhongyao}
  For arbitrarily fixed $\Upsilon>0$, there exists $\lambda>0$, s.t.
  $$\nu(\Xi):=e^{[\lambda(T-t)+\Upsilon]\kappa(x)}, \text{ with } \kappa(x)=\{\frac{1}{2}\log(|x|^2+1)+1\}^2$$
  satisfies
  $$\partial_t\nu+\sup\limits_{u\in\mathbb{U},v\in\mathbb{V}}\{\frac{1}{2}\Tr(\sigma {{\sigma }^{T}}(\Xi,u,v)D^2\nu)+\langle D\nu, b(\Xi,u,v)\rangle\}<0, \text{ in } [t_1,T]\times\mathbb{R}^n,$$
  where $t_1=T-\frac{\Upsilon}{\lambda}$.
\end{lemma}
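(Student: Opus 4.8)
The plan is to verify the differential inequality by a direct computation on the ansatz $\nu(\Xi)=e^{\mu(t)\kappa(x)}$ with $\mu(t):=\lambda(T-t)+\Upsilon$: I would show that the genuinely second-order part of the operator applied to $\nu$ is bounded by $C\kappa\,\nu$ (with a constant depending only on the linear-growth constant of $b,\sigma$), while $\partial_t\nu=-\lambda\kappa\,\nu$ supplies a strictly negative term of the same order, so that taking $\lambda$ large kills the sum. This function is the Barles--Buckdahn--Pardoux reference function and the computation runs parallel to Lemma~5.1 of \cite{Buckdahn2008}; the only new feature here is that $b,\sigma$ are merely of linear growth rather than bounded, so I would simply carry the growth constant through the estimates.

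Concretely, writing $\rho(x):=\tfrac12\log(|x|^2+1)+1$, so $\kappa=\rho^2\ge1$, one has $\partial_t\nu=-\lambda\kappa\,\nu$, $D\nu=\mu\,(D\kappa)\,\nu$ and $D^2\nu=\mu\big(D^2\kappa+\mu\,D\kappa\,(D\kappa)^{T}\big)\nu$, so after dividing by $\nu>0$ the left-hand side of the claim becomes
\[
-\lambda\kappa+\sup_{u\in\mathbb U,\,v\in\mathbb V}\Big\{\tfrac{\mu}{2}\Tr\big(\sigma\sigma^{T}(\Xi,u,v)D^2\kappa\big)+\tfrac{\mu^2}{2}(D\kappa)^{T}\sigma\sigma^{T}(\Xi,u,v)D\kappa+\mu\langle D\kappa,b(\Xi,u,v)\rangle\Big\}.
\]
From $D\rho=\frac{x}{|x|^2+1}$ and $D^2\rho=\frac{I}{|x|^2+1}-\frac{2xx^{T}}{(|x|^2+1)^2}$ (both with operator norm $\le\frac{1}{|x|^2+1}$) I would record the decay bounds $|D\kappa|^2=4\rho^2|D\rho|^2\le\frac{4\kappa}{|x|^2+1}$ and $\|D^2\kappa\|\le2|D\rho|^2+2\rho\|D^2\rho\|\le\frac{4\rho}{|x|^2+1}$, and then combine them with the growth bounds $\|\sigma\sigma^{T}\|\le\Tr(\sigma\sigma^{T})\le C(1+|x|)^2$ and $|b|\le C(1+|x|)$ coming from Hypothesis~\ref{sdexishu}, together with $(1+|x|)^2\le2(|x|^2+1)$, to get
\[
|\Tr(\sigma\sigma^{T}D^2\kappa)|\le 8C\sqrt\kappa,\qquad (D\kappa)^{T}\sigma\sigma^{T}D\kappa\le 8C\kappa,\qquad |\langle D\kappa,b\rangle|\le 3C\sqrt\kappa,
\]
uniformly in $(u,v)$. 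Since $\kappa\ge1$ gives $\sqrt\kappa\le\kappa$, the bracketed supremum is then $\le(4C\mu^2+7C\mu)\kappa$, so the whole expression is $\le\big(-\lambda+4C\mu(t)^2+7C\mu(t)\big)\kappa\,\nu$.

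To finish, observe that on $[t_1,T]$ with $t_1:=T-\Upsilon/\lambda$ one has $\mu(t)=\lambda(T-t)+\Upsilon\in[\Upsilon,2\Upsilon]$, so I would simply fix any $\lambda>16C\Upsilon^2+14C\Upsilon$; then $-\lambda+4C\mu(t)^2+7C\mu(t)<0$ for every $t\in[t_1,T]$, and since $\kappa\,\nu>0$ the strict inequality holds throughout $[t_1,T]\times\mathbb R^n$. I expect the only real obstacle to be making the two decay estimates $|D\kappa|^2,\|D^2\kappa\|\lesssim\kappa/(|x|^2+1)$ sharp enough: it is precisely the squared-logarithm form of $\kappa$ that makes this $1/(|x|^2+1)$ decay cancel the quadratic growth of $\sigma\sigma^{T}$ (and the linear growth of $b$), leaving a bound proportional to $\kappa$ rather than to a genuine power of $|x|$; without that cancellation the $-\lambda\kappa$ term could not dominate, and this is also what forces the restriction to a short interval (through $\mu\le2\Upsilon$), since on a long interval the quadratic-in-$\mu$ diffusion term would reassert itself.
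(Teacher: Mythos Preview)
Your argument is correct. The paper itself does not prove this lemma: it simply quotes the reference-function result as Lemma~5.1 of \cite{Buckdahn2008} and uses it as a black box. What you have written is exactly the computation underlying that cited lemma, carried out with the linear-growth bounds $|b|\le C(1+|x|)$, $\Tr(\sigma\sigma^T)\le C(1+|x|)^2$ that Hypothesis~\ref{sdexishu} guarantees here; the decay estimates $|D\kappa|^2\le 4\kappa/(|x|^2+1)$ and $\|D^2\kappa\|\le 4\rho/(|x|^2+1)$ are the crucial cancellation, and you have stated them correctly. One small remark: the linear-growth (as opposed to bounded) setting is not actually a ``new feature'' relative to \cite{Buckdahn2008}, since that paper also works under a Lipschitz-in-$x$ hypothesis on $b,\sigma$, so your proof is genuinely the same as the one the paper is citing rather than an extension of it.
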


Now we perform the uniqueness of the viscosity solution to HJBI equation \eqref{hjbi}.
\begin{theorem}\label{weiyi}
 Suppose Hypotheses \ref{sdexishu} and \ref{bsdexishu} apply. Then $W$ is the unique viscosity solution to HJBI equation \eqref{hjbi} in $\Gamma$.
\end{theorem}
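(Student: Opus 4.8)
The plan is to get uniqueness from existence (Theorem \ref{cunzai}, which already places the solution $W$ in $\Gamma$ since $W$ is continuous and has at most linear growth in $x$ by Proposition \ref{xlianxu}) together with a comparison principle valid in $\Gamma$, run both ways against any competing solution. Concretely, I would prove the statement: \emph{if $W_1\in\Gamma$ is a viscosity subsolution of \eqref{hjbi} and $W_2\in\Gamma$ a viscosity supersolution, then $W_1\le W_2$ on $[0,T]\times\mathbb{R}^n$}; applying this with $(W_1,W_2)=(\tilde W,W)$ and then with $(W_1,W_2)=(W,\tilde W)$ for an arbitrary viscosity solution $\tilde W\in\Gamma$ forces $\tilde W=W$. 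As in \cite{Zhuo2020} (cf. the remark after Lemma \ref{bijiao}) one may assume $\theta\le 0$ after a standard exponential change of the unknown, so that $g$ is nonincreasing in $y$; the case $\theta>0$ reduces to this one, or is absorbed by enlarging $\lambda$ below.

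The comparison is Lemma \ref{bijiao} upgraded from the bounded case via the barrier $\nu$ of Lemma \ref{zhongyao}. Given $W_1,W_2$, let $\Upsilon_1,\Upsilon_2$ be the growth exponents from the definition of $\Gamma$, put $\Upsilon:=\max(\Upsilon_1,\Upsilon_2)+1$, and let $\lambda>0$, $\nu$ and $t_1=T-\Upsilon/\lambda$ be as in Lemma \ref{zhongyao}. For $\mu>0$ set $W_1^\mu:=W_1-\mu\nu$, $W_2^\mu:=W_2+\mu\nu$. Since $\nu$ grows like $e^{\Upsilon\kappa(x)}$, strictly faster than the $\Gamma$-growth of $W_1,W_2$, the function $W_1^\mu$ is bounded from above and $W_2^\mu$ bounded from below on $[t_1,T]\times\mathbb{R}^n$, and $W_1^\mu(T,\cdot)\le h\le W_2^\mu(T,\cdot)$. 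The crux is that $W_1^\mu$ is still a viscosity subsolution of \eqref{hjbi} on $[t_1,T)\times\mathbb{R}^n$ (and $W_2^\mu$ a supersolution): if $\phi\in C^3_b$ touches $W_1^\mu$ from above at $(\bar t,\bar x)$, then $\phi+\mu\nu\chi$ (with $\chi\in C_c^\infty$, $\chi\equiv1$ near $(\bar t,\bar x)$) touches $W_1$ from above there, so (normalising $\phi(\bar t,\bar x)=W_1^\mu(\bar t,\bar x)$) the subsolution inequality for $W_1$ gives, at $(\bar t,\bar x)$, $\partial_t\phi+\mu\partial_t\nu+H^-(\Xi,W_1,D\phi+\mu D\nu,D^2\phi+\mu D^2\nu)\ge0$. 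Now $W_1=\phi+\mu\nu\ge\phi$ at that point and $\theta\le0$ give $g(\Xi,W_1,u,v)\le g(\Xi,\phi,u,v)$, hence $H^-(\Xi,W_1,\cdot,\cdot)\le H^-(\Xi,\phi,\cdot,\cdot)$; and peeling the $\mu\nu$-terms off the sup--inf (using $\sup\inf(a+b)\le\sup\inf a+\sup\sup b$) bounds $H^-(\Xi,\phi,D\phi+\mu D\nu,D^2\phi+\mu D^2\nu)$ by $H^-(\Xi,\phi,D\phi,D^2\phi)+\mu\sup_{u,v}\{\frac12\Tr(\sigma\sigma^T D^2\nu)+\langle D\nu,b\rangle\}$. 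Combining with the displayed inequality and the strict estimate $\mu\partial_t\nu+\mu\sup_{u,v}\{\frac12\Tr(\sigma\sigma^T D^2\nu)+\langle D\nu,b\rangle\}<0$ on $[t_1,T]$ of Lemma \ref{zhongyao} yields $\partial_t\phi+H^-(\Xi,\phi,D\phi,D^2\phi)>0$, as required.

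With $W_1^\mu,W_2^\mu$ bounded sub/supersolutions on $[t_1,T]$ ordered at $t=T$, the proof of Lemma \ref{bijiao} applies verbatim on the sub-interval $[t_1,T]$ and gives $W_1-\mu\nu\le W_2+\mu\nu$ on $(t_1,T]\times\mathbb{R}^n$; letting $\mu\downarrow0$, $W_1\le W_2$ on $(t_1,T]\times\mathbb{R}^n$. I would then iterate backwards: the step $\Upsilon/\lambda$ being fixed, the same argument on $[t_2,t_1]$ with the time-shifted barrier $e^{[\lambda(t_1-t)+\Upsilon]\kappa(x)}$ (a strict supersolution there by the same computation, the bounds on $b,\sigma$ being uniform in $t$) and terminal data at $t_1$ — where the restrictions of $W_1,W_2$ are again a sub- and a supersolution, and, in the uniqueness application, coincide — yields $W_1\le W_2$ on $(t_2,t_1]$; after $\lceil T\lambda/\Upsilon\rceil$ such steps, together with continuity at the junction times, one covers $[0,T]\times\mathbb{R}^n$. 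The main obstacle is exactly the middle step, that $W_1-\mu\nu$ remains a viscosity subsolution: the merely monotone, non-Lipschitz, polynomially growing dependence of $g$ on $y$ forces the mismatch $g(\Xi,W_1,\cdot,\cdot)$ versus $g(\Xi,\phi,\cdot,\cdot)$ to be controlled through monotonicity alone (hence the reduction to $\theta\le0$), while the possibly unbounded coefficients make $\nu$ only a short-time barrier and so necessitate the time iteration; once this is arranged, everything else follows from Lemmas \ref{bijiao} and \ref{zhongyao}.
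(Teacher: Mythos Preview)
Your proposal is correct and follows essentially the same route as the paper: reduce to $\theta\le 0$, perturb a putative pair of sub-/supersolutions by $\pm\mu\nu$ using the barrier of Lemma~\ref{zhongyao}, verify that the perturbed functions remain bounded sub-/supersolutions on $[t_1,T]$ via the monotonicity of $g$ in $y$ and the strict barrier inequality, apply Lemma~\ref{bijiao}, let $\mu\downarrow 0$, and iterate backward in time with step $\Upsilon/\lambda$. Your write-up is in fact slightly more careful than the paper's (you use a cutoff $\chi$ so that $\phi+\mu\nu\chi\in C^3_b$, and you state the general comparison in $\Gamma$ before specialising to uniqueness), but the key computation and iteration are identical.
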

\begin{proof}
  Let $W_1,W_2\in\Gamma$ be two viscosity solutions of HJBI equation \eqref{hjbi}. For arbitrary $\Upsilon>0$ and  $\varepsilon>0$, $W_1-\varepsilon\nu$ and $W_2+\varepsilon\nu$ are respectively viscosity subsolution bounded from above and super solution bounded from below on $[t_1,T]\times\mathbb{R}^n$. The boundedness property is easy to check. To see the viscosity property, fix $(\Xi)\in[t_1,T]\times\mathbb{R}^n$ and $\phi\in C^3_{b}([0,T]\times\mathbb{R}^n)$, s.t. $W_1-\phi$ attains its local maximum at $(\Xi)$, then we could also say that $W_1-\varepsilon\nu-(\phi-\varepsilon\nu)$ attains its local maximum at $(\Xi)$. Lemma \ref{zhongyao} leads to that
   \begin{equation}\label{zhongyaojieguo}
     \partial_t\nu+\frac{1}{2}\Tr(\sigma {{\sigma }^{T}}(\Xi,u,v)D^2\nu)+\langle D \nu, b(\Xi,u,v)\rangle<0 \text{ at } (\Xi), \text{ for any } u\in\mathbb{U}, v\in\mathbb{V}.
   \end{equation}
   Then
  \begin{align*}
    &\quad\partial_t(\phi-\varepsilon\nu)+H^-(\Xi,\phi-\varepsilon\nu,D(\phi-\varepsilon\nu),D^2(\phi-\varepsilon\nu))\\
    &=\sup\limits_{u\in\mathbb{U}}\inf\limits_{v\in\mathbb{V}}\big\{\partial_t(\phi-\varepsilon\nu)+\frac{1}{2}\Tr(\sigma\sigma^T(\Xi,u,v)D^2(\phi-\varepsilon\nu)) +\langle D(\phi-\varepsilon\nu),b(\Xi,u,v)\rangle\\&\qquad\qquad \quad +g(\Xi,\phi-\varepsilon\nu,u,v) \big\}\\
    &=\sup\limits_{u\in\mathbb{U}}\inf\limits_{v\in\mathbb{V}}\big\{\partial_t\phi+\frac{1}{2}\Tr(\sigma\sigma^T(\Xi,u,v)D^2\phi) +\langle D\phi, b(\Xi,u,v)\rangle+g(\Xi,\phi-\varepsilon\nu,u,v)\\&\qquad\qquad \quad -\varepsilon[\partial_t\nu+\frac{1}{2}\Tr(\sigma\sigma^T(\Xi,u,v)D^2\nu) +\langle D\nu, b(\Xi,u,v)\rangle]\big\}\\
    &\overset{(d)}{>}\partial_t\phi+H^-(\Xi,\phi,D\phi,D^2\phi)\\
    &>0,
    \end{align*}
    where $(d)$ is a result of \eqref{zhongyaojieguo}, the monotonicity of $g$ regarding $y$ and the fact that $\nu>0$. Hence $W_1-\varepsilon\nu$ is a viscosity subsolution to \eqref{hjbi}. The viscosity property of $W_2+\varepsilon\nu$ can be proved similarly. Then as a result of Lemma \ref{bijiao}, for any $(\Xi)\in(t_1,T]\times\mathbb{R}^n$,
$$W_1-\varepsilon\nu\le W_2+\varepsilon\nu.$$
Letting $\varepsilon\rightarrow 0$,  we get $$W_1\le W_2 \text{ on } (\Xi)\in(t_1,T]\times\mathbb{R}^n.$$ Similarly, $$W_1\ge W_2 \text{ on } (\Xi)\in(t_1,T]\times\mathbb{R}^n.$$
Thus $$W_1= W_2 \text{ on } (\Xi)\in(t_1,T]\times\mathbb{R}^n.$$
Letting $t_2=(t-\frac{\Upsilon}{\lambda})^+$,  we execute the same procedures on $[t_2,t_1]$ and then, if $t_2>0$ on $[t_3,t_2]$ where $t_3=(t_2-\frac{\Upsilon}{\lambda})^+$... etc. It eventually arrives at
$$W_1= W_2 \text{ on } (\Xi)\in[0,T]\times\mathbb{R}^n.$$
The proof is completed.
\end{proof}
\begin{remark}\label{daxiao}
  Notice that $H^-\le H^+$,   any viscosity solution to HJBI equation \eqref{upperhjbi} is a supersolution to HJBI equation \eqref{hjbi}. Consequently by  analogues arguments of  Theorem 3.4, $W\le U$, which is why we call $W$ a lower value  and $U$ an upper one. Moreover, if the Isaacs circumstance $H^-=H^+$ applies, then \eqref{upperhjbi} and \eqref{hjbi} coincide and  $W=U$, which is the value of the SDG.
\end{remark}
\section{Improved regularity of the value function}\label{sec4}

We intend to further improve the regularity of $W$ regarding $t$ to Lipschitzian continuity, which makes $W$ more likely to be differentiable regarding $t$.  The requirement in \cite{Buckdahn2011b} of uniformly bounded coefficients is here removed and  a more delicate estimate of $W$ is obtained. The methods in \cite{Buckdahn2011b} are invalid, due to the existence of nonanticipative strategies. Fortunately, via a novel nonanticipative strategy and some delicate analysis, we are able to overcome this difficulty.
\begin{prop}\label{yizhilip}
 For $b$ and $\sigma$, suppose they are continuous regarding $(u,v)$ uniformly in $(\Xi)$, and Lipschitzian regarding $(\Xi)$ uniformly in $(u,v)$. For $h$, suppose it is Lipschitzian regarding $x$. For $g$, suppose (I) or (II) applies, where (I): $g$ is independent of $z$, and (i), (iii) as well as (iv) in  Hypothesis \ref{bsdexishu} apply with additionally that $g$ is  Lipschitzian regarding $t$ uniformly in $(x,y,u,v)$; and (II): $g$ is continuous regarding $(u,v)$ uniformly in $(\Xi,y,z)$, and Lipschitzian regarding $(\Xi,y,z)$ uniformly in $(u,v)$.
Then for $\delta\in(0,T)$, there exist $C_\delta, C'>0$ s.t. lower value function $W$ satisfies the following property:
  $$|W(t_0,x_0)-W(t_1,x_1)|\le C_\delta(C'|t_0-t_1|+|x_0-x_1|), (t_0,x_0),(t_1,x_1)\in[0,T-\delta]\times\mathbb{R}^n,$$
  where $C'$ depends on $C$, $|x_0|$, $T$, as well as the upper bound of $|b(\cdot,0,\cdot,\cdot)|$, $|\sigma(\cdot,0,\cdot,\cdot)|$, $|g(\cdot,0,0,\cdot, \cdot)|$ (or $|g(\cdot,0,0,0,\cdot,\cdot)|$) on $[0,T]\times\mathbb{U}\times\mathbb{V}$, and $ C_\delta$  is independent of $(t_0,x_0)$ and $(t_1,x_1)$. Moreover, when the coefficients are uniformly bounded, $C'$ can only depend on  $T$.
\end{prop}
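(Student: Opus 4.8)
The plan is to exploit the dynamic programming principle (Theorem \ref{dpp}) to reduce the time-increment estimate to a one-step analysis on a small interval $[t_0,t_1]$, and then bound the difference $W(t_0,x_0)-W(t_1,x_0)$ by comparing the value $W(t_0,x_0)$ with the value obtained by running the state from $t_0$ to $t_1$, freezing an almost-optimal control-strategy pair, and then invoking $W(t_1,\cdot)$ at the (random) endpoint $x_{t_1}^{\Xi;u,\beta(u)}$. The $x$-variable part is already furnished by Proposition \ref{xlianxu}(i), so it suffices to treat $|W(t_0,x_0)-W(t_1,x_0)|$; the full estimate then follows from the triangle inequality $|W(t_0,x_0)-W(t_1,x_1)|\le|W(t_0,x_0)-W(t_1,x_0)|+|W(t_1,x_0)-W(t_1,x_1)|$ and Proposition \ref{xlianxu}(i).

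First I would use Remark \ref{queding}(b): given $\varepsilon>0$, there is a nonanticipative strategy $\beta^{\varepsilon}\in\mathcal{B}_{t_0,t_1}$ with $W(t_0,x_0)\ge G^{\Xi;u,\beta^{\varepsilon}(u)}_{t_0,t_1}[W(t_1,x_{t_1}^{\Xi;u,\beta^{\varepsilon}(u)})]-\varepsilon$ for all $u\in\mathcal{U}_{t_0,t_1}$, and conversely for any $\beta$ there is $u^{\varepsilon}$ with the reverse inequality up to $\varepsilon$. To compare with $W(t_1,x_0)=\essinf_{\beta}\esssup_u J(t_1,x_0;u,\beta(u))$ I need to transport a strategy defined on $[t_1,T]$ back to one on $[t_0,T]$ (and vice versa). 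This is exactly the point where the model differs from the control case of \cite{Buckdahn2011b}, and where I expect the main obstacle: I would design the new nonanticipative strategy $\tilde\beta^{1}$ on $[t_0,T]$ by letting it play an arbitrary fixed control on $[t_0,t_1]$ and then, on $[t_1,T]$, play according to the given $[t_1,T]$-strategy applied to the restricted opponent control — and I must check carefully that this assignment is genuinely nonanticipative (respects stopping-time restrictions) and measurable. The delicate part is that the opponent's control on $[t_0,t_1]$ carries information that must not leak into the strategy's response on $[t_1,T]$ in a way that breaks the nonanticipativity definition; handling this requires the Girsanov/shift arguments in the spirit of \cite{Buckdahn2008} already invoked for the determinacy of $W$.

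Once the strategies are transported, the estimate of $G^{\Xi;u,\beta(u)}_{t_0,t_1}[\cdot]$ is a standard BSDE computation on the short interval: writing $\widetilde Y$ for the solution of the $G$-BSDE on $[t_0,t_1]$ with terminal data $W(t_1,x_{t_1}^{\Xi;u,\beta(u)})$ and applying It\^o's formula to $|\widetilde Y_r - W(t_1,x_0)|^2$ (using the monotonicity condition (iii) and the polynomial growth (iv) in Hypothesis \ref{bsdexishu} in place of a Lipschitz bound in $y$, as in Proposition \ref{yilai}), I would obtain
\begin{align*}
|\widetilde Y_{t_0}-W(t_1,x_0)| \le C\Big(E\big[|W(t_1,x_{t_1}^{\Xi;u,\beta(u)})-W(t_1,x_0)|^2\,\big|\,\mathcal{F}_{t_0}\big]^{1/2}+(t_1-t_0)\sup_{u,v}\big(1+\textstyle\int\cdots\big)\Big),
\end{align*}
where the second term collects $\int_{t_0}^{t_1}|g(r,x_r,W(t_1,x_0),0,u_r,v_r)|\,dr$ and is $O(t_1-t_0)$ with a constant depending on $|x_0|$, $T$, and the asserted upper bounds of $|b(\cdot,0,\cdot,\cdot)|,|\sigma(\cdot,0,\cdot,\cdot)|,|g(\cdot,0,0,\cdot,\cdot)|$ via the linear growth of $W$ (Proposition \ref{xlianxu}(ii)) and of the state flow. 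For the first term I use Proposition \ref{xlianxu}(i) to bound it by $K'\,E[|x_{t_1}^{\Xi;u,\beta(u)}-x_0|^2|\mathcal{F}_{t_0}]^{1/2}$, and the standard short-time SDE estimate gives $E[|x_{t_1}^{\Xi;u,\beta(u)}-x_0|^2|\mathcal{F}_{t_0}]\le C(1+|x_0|^2)(t_1-t_0)$, again using only linear growth of $b,\sigma$. Here the crucial improvement over the classical $\frac12$-H\"older bound comes from the Lipschitz-in-$t$ assumption on $g$ in case (I) (or the Lipschitz-in-$(\Xi,y,z)$ assumption in case (II)), which upgrades the $O(\sqrt{t_1-t_0})$ contribution of the running cost to $O(t_1-t_0)$; I would localize by freezing $t$ in $g(r,\cdot)\to g(t_1,\cdot)$ at the cost of a Lipschitz-in-$t$ error that is itself $O(t_1-t_0)$. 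Combining the two bounds, taking $\varepsilon\to0$, and collecting constants yields $|W(t_0,x_0)-W(t_1,x_0)|\le C_\delta\big(C'|t_0-t_1|+C'\sqrt{|t_0-t_1|}\cdot|t_0-t_1|^{1/2}\big)$, i.e. the desired Lipschitz bound on $[0,T-\delta]$, with $C_\delta$ arising from iterating the one-step estimate over a partition of $[t_0,t_1]$ whose mesh stays bounded away from $T$ (this is where the exclusion of $T$ and the dependence of $C_\delta$ on $\delta$ enter). The remark that $C'$ reduces to a function of $T$ alone when the coefficients are uniformly bounded is immediate from inspecting which terms the $|x_0|$-dependence came from.
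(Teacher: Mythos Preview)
Your approach has a genuine gap: the DPP-on-$[t_0,t_1]$ route can only recover the $\tfrac12$-H\"older estimate (Proposition~\ref{tlianxu}), not the Lipschitz one. The bottleneck is the terminal-data term
\[
E\big[|W(t_1,x_{t_1}^{\Xi;u,\beta(u)})-W(t_1,x_0)|^2\,\big|\,\mathcal{F}_{t_0}\big]^{1/2}
\le K'\,E\big[|x_{t_1}^{\Xi;u,\beta(u)}-x_0|^2\,\big|\,\mathcal{F}_{t_0}\big]^{1/2},
\]
and the short-time SDE bound you quote, $E[|x_{t_1}-x_0|^2]\le C(1+|x_0|^2)(t_1-t_0)$, gives only $O(\sqrt{t_1-t_0})$ after taking the square root; there is no way to upgrade this to $O(t_1-t_0)$ because the Brownian increment genuinely has $L^2$-size $\sqrt{t_1-t_0}$. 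Your final display $C'\sqrt{|t_0-t_1|}\cdot|t_0-t_1|^{1/2}$ silently inserts an extra factor $|t_0-t_1|^{1/2}$ that is not produced by any step of the argument. The Lipschitz-in-$t$ assumption on $g$ affects only the running-cost term (already $O(t_1-t_0)$), not the terminal-data term, so it does not close the gap. Iterating over a fine partition does not help either: $N$ steps of size $|t_0-t_1|/N$ yield $N\cdot O(\sqrt{|t_0-t_1|/N})\to\infty$.

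The paper avoids this obstruction by a completely different mechanism: a linear time change $\tau:[t_1,T]\to[t_0,T]$ and the time-changed Brownian motion $\mathbb{B}_s=B^0_{\tau(s)}/\sqrt{\dot\tau}$, which transports the \emph{entire} system on $[t_1,T]$ onto $[t_0,T]$. One then compares the two full-horizon BSDEs on $[t_0,T]$; the discrepancy comes only through the factors $|1-1/\dot\tau|$, $|\tau^{-1}(r)-r|$, $|1-\sqrt{\dot\tau}|$, each of which is $\le C_\delta'|t_0-t_1|$ on $[0,T-\delta]$ (this is where the Lipschitz-in-$t$ hypotheses on $b,\sigma,g$ are actually used). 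The new nonanticipative strategy is not built by ``freezing on $[t_0,t_1]$'' but via the time change itself, $\tilde\beta^1(u^0)(\cdot)=\beta^1(u^0_{\tau})(\tau^{-1}(\cdot))$, and the nonanticipativity check reduces to showing that $\tau^{-1}$ of an $\mathcal{F}^0$-stopping time is an $\mathcal{F}^{\mathbb{B}}$-stopping time. This bijection between $\mathcal{B}^{t_1}$ and $\mathcal{B}^{t_0}$ lets one write both value functions as inf--sup over the \emph{same} index sets, after which the $\varepsilon$-argument of Lemma~\ref{zhiyizhi} applies.
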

\begin{proof}
The proofs under conditions (I) are similar to (II), where in (I)  monotonicity condition  is applied instead of  Lipschitzian condition via It\^{o}'s formula. For this we only present the proof under condition (II).

Define $B^0_{s}=B_s-B_{t}$, $s\ge t$. Then $B^0$ is an $\mathcal{F}^0_s$-Brownian motion with $\mathcal{F}^0_s=\sigma\{B^0_r,t\le r\le s\}\vee\mathcal{N}$. Denote $\mathcal{U}^t$ as the set of admissible controls in $\mathcal{U}_{t,T}$ that are adapted to $\mathcal{F}^0_s$, $s\in[t,T]$. Similarly define $\mathcal{V}^t$. And denote $\mathcal{B}^t$ as the set of nonanticipative strategies from $\mathcal{U}^t$ to $\mathcal{V}^t$. Then for $u\in\mathcal{U}^t$ and $v\in\mathcal{V}^t$, $Y^{\Xi;u,v}_t$ is deterministic. Define a lower value function
$$\mathcal{W}(\Xi)=\underset{\beta\in\mathcal{B}^t}{\inf}\underset{u\in\mathcal{U}^t}{\sup}Y^{\Xi;u,\beta(u)}_t.$$
Then  the   proofs of Theorems 3.6 and  4.2 in \cite{Buckdahn2008} still hold to obtain that $\mathcal{W}(\Xi)$ is a viscosity solution of HJBI equation \eqref{hjbiz}. Therefore, by the uniqueness of viscosity solution, we obtain that $W(\Xi)=\underset{\beta\in\mathcal{B}^t}{\inf}\underset{u\in\mathcal{U}^t}{\sup}Y^{\Xi;u,\beta(u)}_t$. Hence, we only need to consider the elements in $\mathcal{U}^t$ and $\in\mathcal{B}^t$ to continue our analysis.

{\bf Step 1.}  Through time change, we make  state-cost systems SDE \eqref{sdg} and BSDE \eqref{controlled} that are individually starting at $t_0$ and $t_1$,  uniformly start at $t_0$, hence comparable.

For $t_1<T$ and $t=t_0< T$, set the time change
$\tau:[t_1,T]\ni s\rightarrow t_0+\frac{T-t_0}{T-t_1}(s-t_1)\in [t_0,T]$. Then $\partial_s\tau=\frac{T-t_0}{T-t_1}:=\dot{\tau}$. Introduce $\mathbb{B}_s:=\frac{B^0_{\tau(s)}}{\sqrt{\dot{\tau}}}$, $s\in[t_1,T]$, which is a Brownian motion beginning at $t_1$. For $u^0\in\mathcal{U}^{t_0}$ and $v^0\in\mathcal{V}^{t_0}$, define $u^1_r:=u^0_{\tau(r)}$ and $v^1_r:=v^0_{\tau(r)}$, $r\in[t_1,T]$. Then it is obvious that $u^1\in\mathcal{U}_{\mathbb{B}}^{t_1}$ and $v^1\in\mathcal{V}_{\mathbb{B}}^{t_1}$, where $\mathcal{U}_{\mathbb{B}}^{t_1}$ (or $\mathcal{V}_{\mathbb{B}}^{t_1}$) is the set of $\mathbb{U}$ (or $\mathbb{V}$)-valued processes adapted to $\mathcal{F}^\mathbb{B}_s:=\sigma\{\mathbb{B}_r,t_1\le r\le s\}\vee\mathcal{N}$. Then for $x_0, x_1\in\mathbb{R}^n$,   introduce the following equations, which are uniquely solvable due to the hypotheses:
\begin{equation*}
  x^0_s=x_0+\int_{t_0}^sb(r,x^0_r,u^0_r,v^0_r)dr+\int_{t_0}^s\sigma(r,x^0_r,u^0_r,v^0_r)dB^0_r,
  \end{equation*}
\begin{equation*}
  x^1_s=x_1+\int_{t_1}^sb(r,x^1_r,u^1_r,v^1_r)dr+\int_{t_1}^s\sigma(r,x^1_r,u^1_r,v^1_r)d\mathbb{B}_r,
  \end{equation*}
  $$Y^0_s=h(x^0_T)+\int_s^Tg\left(r,x^0_r,Y^0_r,Z^0_r,u^0_r,v^0_r\right)dr-\int_s^TZ^0_rd B^0_r,$$
$$Y^1_s=h(x^1_T)+\int_s^Tg\left(r,x^1_r,Y^1_r,Z^1_r,u^1_r,v^1_r\right)dr-\int_s^TZ^1_rd\mathbb{B}_r.$$
Set $\tilde{x}^1_s=x^1_{\tau^{-1}(s)}$, where $\tau^{-1}$ is the inverse function of $\tau$. Due to that $\mathbb{B}_{\tau^{-1}(s)}=\frac{1}{\sqrt{\dot{\tau}}}B^0_s$, and $u^1_{\tau^{-1}(s)}=u^0_s$, $v^1_{\tau^{-1}(s)}=v^0_s$, $s\in[t_0,T]$, we obtain that $\tilde{x}^1$ satisfies the following SDE:
\begin{equation*}
  \tilde{x}^1_s=x_1+\int_{t_0}^s\frac{1}{\dot{\tau}}b(\tau^{-1}(r),\tilde{x}^1_r,u^0_r,v^0_r)dr+\int_{t_0}^{s}\frac{1}{\sqrt{\dot{\tau}}}\sigma(\tau^{-1}(r),\tilde{x}^1_r,u^0_r,v^0_r)dB^0_r.
\end{equation*}
Set $\tilde{Y}^1_s=Y^1_{\tau^{-1}(s)}$ and $\tilde{Z}^1_s=\frac{1}{\sqrt{\dot{\tau}}}Z^1_{\tau^{-1}(s)}$, then we obtain that
$$\tilde{Y}^1_s=h(\tilde{x}^1_T)+\int_s^T\frac{1}{\dot{\tau}}g\left(\tau^{-1}(r),\tilde{x}^1_r,\tilde{Y}^1_r,\sqrt{\dot{\tau}}\tilde{Z}^1_r,u^0_r,v^0_r\right)dr-\int_s^T\tilde{Z}^1_rdB^0_r.$$

For  $s\in[t_0,T]$, It\^{o}'s formula  results in
\begin{align}
  & \nonumber\quad E\left[\underset{r\in[t_0,s]}{\sup}\left|x^0_r-\tilde{x}^1_r\right|^2\right]\\
  &\label{ineq}\le 4E\bigg[|x_0-x_1|^2+\int_{t_0}^s|b(r,x^0_r,u^0_r,v^0_r)-\frac{1}{\dot{\tau}}b(\tau^{-1}(r),\tilde{x}^1_r,u^0_r,v^0_r)|^2dr\\
  &\nonumber+4\int_{t_0}^s|\sigma(r,x^0_r,u^0_r,v^0_r)-\frac{1}{\sqrt{\dot{\tau}}}\sigma(\tau^{-1}(r),\tilde{x}^1_r,u^0_r,v^0_r)|^2dr\bigg].
\end{align}For the second integral on the right side of \eqref{ineq}, \begin{align}
&\ |b(r,x^0_r,u^0_r,v^0_r)-\frac{1}{\dot{\tau}}b(\tau^{-1}(r),\tilde{x}^1_r,u^0_r,v^0_r)|\nonumber\\
\le&\ |b(r,x^0_r,u^0_r,v^0_r)-\frac{1}{\dot{\tau}}b(r,x^0_r,u^0_r,v^0_r)|+|\frac{1}{\dot{\tau}}b(r,x^0_r,u^0_r,v^0_r)-\frac{1}{\dot{\tau}}b(\frac{1}{\tau},\tilde{x}^1_r,u^0_r,v^0_r)|\nonumber\\
\le&\  C(1+|x^0_r|)|1-\frac{1}{\dot{\tau}}|+\frac{1}{\dot{\tau}}C(|r-\frac{1}{\tau}|+|x^0_r-\tilde{x}^1_r|)\nonumber\\
\le&\  C(1+|x^0_r|)|1-\frac{1}{\dot{\tau}}|+\frac{1}{\dot{\tau}}C(|r-\frac{1}{\tau}|+\underset{s\in[t_0,r]}{\sup}|x^0_s-\tilde{x}^1_s|).\label{bsigma}
\end{align}
The third integral on the right side of \eqref{ineq} is similarly estimated with $b$ substituted by $\sigma$.
 Lemma 2.1 in \cite{Buckdahn2011b} leads to that there exists a $C_{\delta}'>0$ only depending on $\delta$ and $T$, s.t. \begin{align}\label{tau}\left|1-\frac{1}{\dot{\tau}}\right|+|\tau^{-1}(r)-r|+|1-\sqrt{\dot{\tau}}|\le C_{\delta}'|t_0-t_1|, \ r\in[t_0,T].\end{align}
Plugging \eqref{tau} and \eqref{bsigma} into \eqref{ineq}, and due to Gronwall's inequality as well as estimates of $|x^0
_r|$ (e.g. Proposition 2.2 in \cite{Pu2018}), we arrive at
\begin{align}\label{deltax} E\left[\underset{r\in[t_0,s]}{\sup}\left|x^0_r-\tilde{x}^1_r\right|^2\right]
\le C_\delta''(|x_0-x_1|^2+C''|t_0-t_1|^2),\end{align} where $C''$ depends on $C$, $|x_0|$, $T$, as well as the upper bound of $|b(\cdot,0,\cdot,\cdot)|$, $|\sigma(\cdot,0,\cdot,\cdot)|$ on $[0,T]\times\mathbb{U}\times\mathbb{V}$, and $C_\delta''$ is independent of $(t_0,x_0)$, $(t_1,x_1)$, $u^0$ and $v^0$.

Subsequently,  It\^{o}'s formula  leads to \begin{align*}
 &  \quad E\left[\left|Y^0_{t_0}-\tilde{Y}_{t_0}^1\right|^2+\int_{t_0}^T\left|Z^0_r-\tilde{Z}_r^1\right|^2dr-\left|h(x^0_T)-h(\tilde{x}^1_T)\right|^2\right]\\
 &=E\left[2\int_{t_0}^T(Y^0_r-\tilde{Y}^1_r)\left[g(r,x^0_r,Y^0_r,Z^0_r,u^0_r,v^0_r)-\frac{1}{\dot{\tau}}g\left(\tau^{-1}(r),\tilde{x}^1_r,\tilde{Y}^1_r,\sqrt{\dot{\tau}}\tilde{Z}^1_r,u^0_r,v^0_r\right)\right]dr\right]\\
 &\le E\left[2\int_{t_0}^T\left|\left(1-\frac{1}{\dot{\tau}}\right)\left(Y^0_r-\tilde{Y}^1_r\right)g(r,x^0_r,Y^0_r,Z^0_r,u^0_r,v^0_r)\right|dr\right]\\
 &+E\left[2\int_{t_0}^T\frac{C}{\dot{\tau}}\left|Y^0_r-\tilde{Y}^1_r\right|\left(\left|r-\tau^{-1}(r)\right|+\left|x^0_r-\tilde{x}^1_r\right|+\left|Y^0_r-\tilde{Y}^1_r\right|+\left|Z^0_r-\sqrt{\dot{\tau}}\tilde{Z}^1_r\right|\right)dr\right].
\end{align*}Since $\sqrt{\dot{\tau}}+\frac{1}{\dot{\tau}}\le C_T$, where $C_T>0$ only depends on $T$, by some adjustments as well as estimates of $(Y^0, Z^0)$ (Proposition \ref{yilai}) and \eqref{deltax},
we arrive at $|Y^0_{t_0}-\tilde{Y}^1_{t_0}|\le C_\delta(C'|t_0-t_1|+|x_0-x_1|)$,  where $C'$ depends on $C$, $|x_0|$, $T$, as well as the upper bound of $|b(\cdot,0,\cdot,\cdot)|$, $|\sigma(\cdot,0,\cdot,\cdot)|$, $|g(\cdot,0,0,0,\cdot,\cdot)|$ on $[0,T]\times\mathbb{U}\times\mathbb{V}$, and $C_\delta$ is independent of $(t_0,x_0)$, $(t_1,x_1)$, $u^0$ and $v^0$.

{\bf Step 2.} We begin the proof of the main result.

First, define two value functions
\begin{equation*}
  W(t_0,x_0):=\underset{\beta^0\in\mathcal{B}^{t_0}}{\inf}\sup\limits_{u^0\in\mathcal{U}^{t_0}}Y^0_{t_0}[u^0,\beta^0(u^0)],\ W(t_1,x_1):=\underset{\beta^1\in\mathcal{B}^{t_1}}{\inf}\sup\limits_{u^1\in\mathcal{U}_{\mathbb{B}}^{t_1}}Y^1_{t_1}[u^1,\beta^1(u^1)],
\end{equation*}
where $Y^0_{t_0}[u^0,\beta^0(u^0)]$ emphasizes the dependence on $(u^0,\beta^0(u^0))$.  $Y^1_{t_1}[u^1,\beta^1(u^1)]$ is similar.\\
Next we prove that $\underset{\beta^1\in\mathcal{B}^{t_1}}{\inf}\sup\limits_{u^1\in\mathcal{U}_{\mathbb{B}}^{t_1}}Y^1_{t_1}[u^1,\beta^1(u^1)]=\underset{\beta^0\in\mathcal{B}^{t_0}}{\inf}\sup\limits_{u^0\in\mathcal{U}^{t_0}}\tilde{Y}^1_{t_0}[u^0,\beta^0(u^0)]$.

 For $\beta^1\in\mathcal{B}^{t_1}$, define a mapping $\tilde{\beta}^1$ on $\mathcal{U}^{t_0}$ s.t., $\tilde{\beta}^1(u^0)(\cdot)=\beta^1(u^0_\tau)(\tau^{-1}(\cdot))$, $\forall u^0\in\mathcal{U}^{t_0}$. Then we can obtain that $\tilde{\beta}^1$ maps $\mathcal{U}^{t_0}$ into $\mathcal{V}^{t_0}$. Moreover, $\tilde{\beta}^1$ can be verified to be nonanticipative.

Indeed, we first prove that for an $\mathcal{F}^0$-stopping time $\tilde{s}$, $\tau^{-1}(\tilde{s})$ is an $\mathcal{F}^{\mathbb{B}}$-stopping time.

We know that $\mathbb{B}_s=\frac{B^0_{\tau(s)}}{\sqrt{\dot{\tau}}}$, $s\in[t_1,T]$. Letting $\tau(s)=r$, then $\mathbb{B}_{\tau^{-1}(r)}=\frac{B^0_{r}}{\sqrt{\dot{\tau}}}$, $r\in[t_0,T]$. Since $\dot{\tau}$ is a constant, we obtain that $\mathcal{F}^{\mathbb{B}}_{\tau^{-1}(r)}=\mathcal{F}^0_r$. Therefore, for any $s\in[t_1,T]$, \begin{align*}
\{\tau^{-1}(\tilde{s})\le s\}=
\{\tau^{-1}(\tilde{s})\le\tau^{-1}(r)\}=\{\tilde{s}\le r\}\in\mathcal{F}^0_r=\mathcal{F}^{\mathbb{B}}_{\tau^{-1}(r)}=\mathcal{F}^{\mathbb{B}}_{s},
\end{align*}
which means that $\tau^{-1}(\tilde{s})$ is an $\mathcal{F}^{\mathbb{B}}$-stopping time.

Therefore, for $u^0,\tilde{u}^0\in\mathcal{U}^{t_0}$, s.t., $u^0\equiv \tilde{u}^0$ on $[t_0,\tilde{s}]$,  it holds that $u^0_{\tau}\equiv \tilde{u}^0_{\tau}$ on $[t_1,\tau^{-1}(\tilde{s})]$. Thus $\beta^1(u^0_{\tau})\equiv\beta^1(\tilde{u}^0_{\tau})$ on $[t_1,\tau^{-1}(\tilde{s})]$, and
$\beta^1(u^0_{\tau})(\tau^{-1})\equiv\beta^1(\tilde{u}^0_{\tau})(\tau^{-1})$ on $[t_0,\tilde{s}]$, which proves that $\tilde{\beta}^1\in\mathcal{B}^{t_0}$.

Consequently,
\begin{align*}
&\quad \underset{\beta^1\in\mathcal{B}^{t_1}}{\inf}\sup\limits_{u^1\in\mathcal{U}_{\mathbb{B}}^{t_1}}Y^1_{t_1}[u^1,\beta^1(u^1)]\\
&=\underset{\beta^1\in\mathcal{B}^{t_1}}{\inf}\sup\limits_{u^0\in\mathcal{U}_{\mathbb{B}}^{t_0}}Y^1_{t_1}[u^0_\tau,\beta^1(u^0_\tau)]\\
&=\underset{\beta^1\in\mathcal{B}^{t_1}}{\inf}\sup\limits_{u^0\in\mathcal{U}_{\mathbb{B}}^{t_0}}\tilde{Y}^1_{t_0}[u^0,\beta^1(u^0_\tau)(\tau^{-1})]\\
&=\underset{\tilde{\beta}^1\in\mathcal{B}^{t_0}}{\inf}\sup\limits_{u^0\in\mathcal{U}_{\mathbb{B}}^{t_0}}\tilde{Y}^1_{t_0}[u^0,\tilde{\beta}^1(u^0)]\\
&=\underset{\beta^0\in\mathcal{B}^{t_0}}{\inf}\sup\limits_{u^0\in\mathcal{U}_{\mathbb{B}}^{t_0}}\tilde{Y}^1_{t_0}[u^0,\beta^0(u^0)],
\end{align*}
where we have used that $\{\tilde{\beta}^1|\beta^1\in\mathcal{B}^{t_1}\}=\mathcal{B}^{t_0}$, and  $\tilde{Y}^1_{t_0}=Y^1_{t_1}$.

Since
\begin{equation*}
  W(t_0,x_0)=\underset{\beta^0\in\mathcal{B}^{t_0}}{\inf}\sup\limits_{u^0\in\mathcal{U}^{t_0}}Y^0_{t_0}[u^0,\beta^0(u^0)],\ W(t_1,x_1)=\underset{\beta^0\in\mathcal{B}^{t_0}}{\inf}\sup\limits_{u^0\in\mathcal{U}_{\mathbb{B}}^{t_0}}\tilde{Y}^1_{t_0}[u^0,\beta^0(u^0)],
\end{equation*}
similar to the proof of Lemma \ref{zhiyizhi},
 we have for any $\varepsilon>0$, there exist some $\beta_1^{\varepsilon},\beta^\varepsilon\in\mathcal{B}^{t_0}$ and $u_1^\varepsilon,u^\varepsilon\in\mathcal{U}^{t_0}$, s.t.,
\begin{align*}
  &\quad |W(t_0,x_0)-W(t_1,x_1)|-4\varepsilon\nonumber\\
  &\le \left|Y^{0}_{t_0}[u^\varepsilon,\beta^{\varepsilon}_1(u^\varepsilon)]-\tilde{Y}^1_{t_0}[u^\varepsilon,\beta_1^\varepsilon(u^\varepsilon)]\right|+\left|Y^{0}_{t_0}[u^{\varepsilon}_1,\beta^\varepsilon(u^{\varepsilon}_1)]-\tilde{Y}^1_{t_0}[u_1^\varepsilon,\beta^\varepsilon(u_1^\varepsilon)]\right|\\
  &\le  C_\delta(C'|t_0-t_1|+|x_0-x_1|).
\end{align*}
  From the arbitrariness of $\varepsilon$, $|W(t_0,x_0)-W(t_1,x_1)|\le C_\delta(C'|t_0-t_1|+|x_0-x_1|)$.  In addition, in {\bf Step 1}, when the coefficients are uniformly bounded, it is easy to see that $C'$ can only depend on  $T$.

\end{proof}

\section{Example}\label{sec5}
Here we present three examples to illustrate the effectiveness of our theoretical results. The first  example gives an optimal control-strategy pair via  the classical solution of an HJBI equation, the second example demonstrates the analytical form of a viscosity solution of an HJBI equation, and the last example illustrates an application in financial portfolio optimization. Since concrete examples are fairly rare in the study of recursive control-versus-strategy SDGs, our examples are novel compared to existing literature.

\textbf{Example 5.1.} Consider a lower game. Suppose $\mathbb{U}=[0,1]$, $\mathbb{V}=[-1,0]$, $b(\cdot)=u+v$, $\sigma(\cdot)=v$, $g(\cdot)=-(u+v)x-u-v$, $h(\cdot)=\frac{1}{2}x^2$, where $x\in\mathbb{R}$. Then the corresponding HJBI equation is
\begin{equation}
  \left\{\begin{array}{ll}
  \partial _{t}W+\underset{u\in[0,1]}{\sup}\underset{v\in[-1,0]}{\inf}\left\{\frac{1}{2}v^2D^2W+(u+v)DW-(u+v)x-u-v\right\}=0,\quad    \\\\
   {W}|_{t=T}=\frac{1}{2}x^2.\quad
\end{array} \right.
\label{li61}\end{equation}
It can be verified that $W(\Xi)=\frac{1}{2}x^2$ is the classical solution of HJBI equation~\eqref{li61}. By verification theorem, we can obtain that $(\hat{u}(\cdot),\hat{\beta}(\cdot))=(0,0)$ is an optimal control-strategy pair. Notice that $W$ does not satisfy linear growth condition w.r.t. $x$. However, it can be easily verified that the comparison theorem for BSDE still holds here, which makes the verification theorem valid.

\textbf{Example 5.2.} Consider a lower game. Suppose $\mathbb{U}=[0,1]$, $\mathbb{V}=[-1,0]$, $b(\cdot)=x+xv+xuv$, $\sigma(\cdot)=vx$, $g(\cdot)=-(u+v)z-u$, $h(\cdot)=-|x|$, $x\in\mathbb{R}$. Then the corresponding HJBI equation is
\begin{equation}\label{li62}
  \left\{\begin{array}{ll}
  \partial _{t}W+xDW+\underset{u\in[0,1]}{\sup}\underset{v\in[-1,0]}{\inf}\left\{\frac{1}{2}x^2v^2D^2W+xvDW-xv^2DW-u\right\}=0,\quad    \\\\
   {W}|_{t=T}=-|x|.\quad
\end{array} \right.
\end{equation}
It can be verified that the viscosity solution of HJBI equation \eqref{li62} is
\begin{equation*}
W(\Xi)=\left\{\begin{array}{ll}
 -e^{T-t}x, \text{if~}  x>0,  \\\\
  e^{T-t}x, \;\;\;\text{if~} x\le 0.
\end{array} \right.
\end{equation*}
Indeed, when $x>0$ or $x<0$, $W$ is smooth, and therefore $W$ is the viscosity solution of HJBI equation \eqref{li62}. When $x=0$, $W$ is not differentiable w.r.t. $x$. For any $\phi\in C^3_{b}([0,T]\times\mathbb{R})$, s.t., $W-\phi$ attains its local maximum at $(t_0,0)$, where $t_0\in[0,T)$, it can be verified that $\partial_t\phi(t_0,0)=0$. Then by Definition \ref{nxjdingyi} and the uniqueness of viscosity solution,  $W$ is the unique viscosity solution of HJBI equation \eqref{li62}.

\textbf{Example 5.3.}
We give a financial application of an upper game.
  Suppose an investor's portfolio consists  of one stock and one liquid bond. The stock price $S_r(r\ge t)$ follows a geometric Brownian motion
  \begin{equation}\label{stock}
    \frac{dS_r}{S_r}=b_rdr+\sigma_rdB_r, \quad\text{ where } S_t>0 \text{ is given,}
  \end{equation}
  along with the bond price $s_r(r\ge t)$ satisfying an ordinary differential equation (ODE)
  \begin{equation}\label{bond}
    ds_t=r_ts_tdt, \quad s_t=1.
  \end{equation}
  In SDE \eqref{stock} and ODE \eqref{bond}, $r$, the  bond's interest rate, $b$, the stock's appreciation rate and $\sigma$, the volatility rate, are all continuous deterministic functions.  Now suppose that the investor manages to maximize her utility preference by  controlling the  investment and consumption. Denote the portion of  wealth invested in stock by  $u^1:\Omega\times[t,T]\rightarrow[-1,1]$, the portion of  wealth in  bond  $u^0:=1-u^1$, and the money spent on consumption  $c:\Omega\times[t,T]\rightarrow[M,N]$, where $N> M\ge0$.  The market control manages to neutralize the investor's gain by friction such as transaction costs and competition, etc. Thus to characterize the market friction we diminish each wealth allocation $u^i$ by a factor $v^i\in[0,1]$. Assume that all the control processes are $\{\mathcal{F}_r\}_{t\le r\le T}$-adapted.

  Then the net wealth satisfies
  \begin{equation}\label{wealth}
 dV_t=(v^0_tu^0_tr_tV_t+v^1_tu^1_tb_tV_t-c_t)dt+\sigma_tv^1_tu^1_tV_tdB_t, \quad V_t=x,
  \end{equation}
  $x$ being  the investor's original  asset, which is positive. Then obviously  \eqref{wealth} fulfills Hypothesis \ref{sdexishu}.

    Suppose  the  utility preference of the investor is an Epstein-Zin utility $Y_t$ satisfying the BSDE
    \begin{equation}\label{epstein}
      dY_t=-\frac{\rho}{1-\frac{1}{\varsigma}}(1-\vartheta)Y_t[(\frac{c_t}{((1-\vartheta)Y_t)^{\frac{1}{1-\vartheta}}})^{1-\frac{1}{\varsigma}}-1]dt+Z_tdB_t,\quad Y_T=h(V_T),    \end{equation}
 in which $h$ is  Lipschitzian as well as  deterministic, and $\rho,\vartheta$ along with $\varsigma$ are illustrated in the Introduction.  Notice that $Y_t$ is a $\mathcal{F}_t$-measurable r.v.

The generator in BSDE \eqref{epstein} in general does not fulfill the Lipschitzian condition regarding the utility preference $Y$ and the consumption $c$, by which our study is suitable in some circumstances.  \cite{Kraft2013} proposes four circumstances where the generator is monotonic regarding the utility $Y$ and two of them fulfill the polynomial growth condition (iv) in Hypothesis \ref{bsdexishu}: (i) $\vartheta>1$ and $\varsigma>1$; (ii) $\vartheta<1$ and $\varsigma<1$. Then it is not difficult to  choose proper power s.t. the generator of BSDE \eqref{epstein} is continuous and monotonic but non-Lipschitzian regarding the utility $Y$ in both circumstances. Regarding  the continuity regarding the consumption $c$, if $M>0$, both (i) and (ii) are Lipschitzian. If $M=0$, then only (i) fulfills the continuous but non-Lipschitzian condition regarding the consumption $c$.

Let $\mathcal{U}_{t,T}$ comprise feasible controls $u:=(u^1,u^0,c)$ for the investor and $\mathcal{V}_{t,T}$ for the market friction $v:=(v^0,v^1)$.  $\mathcal{A}_{t,T}:=\{\alpha:\mathcal{V}_{t,T}\rightarrow\mathcal{U}_{t,T}\}$ comprises nonanticipative strategies for the investor.
The purpose of the investor is to maximize her utility despite the market friction. The value function is \begin{equation}
  U:=\underset{\alpha\in\mathcal{A}_{t,T}}\esssup\,\underset{v\in\mathcal{V}_{t,T}}\essinf\,  Y_t=\sup\limits_{\alpha\in\mathcal{A}_{t,T}}\inf\limits_{v\in\mathcal{V}_{t,T}}  E[Y_t],
\end{equation}
where the last equality is due to the result of  Remark \ref{queding}  in the upper game case.
Consequently for all suitable non-Lipschitzian conditions satisfying Hypothesis \ref{bsdexishu}, we can apply the results of  Theorems \ref{cunzai} and \ref{weiyi}  in the upper game case, to verify that $U$ is the unique viscosity solution to the following HJBI equation
\begin{equation}\label{lizi}
\left\{\begin{array}{ll}
  \partial_t U+\inf\limits_{(v^0,v^1)\in[0,1]^2}\sup\limits_{(u^1,c)\in[-1,1]\times[M,N]}\big\{[(v^0(1-u^1)r_t+v^1u^1b_t)]xDU+\frac{1}{2}|u^1v^1\sigma_tx|^2D^2U
  \\ \qquad\qquad\qquad\qquad\qquad\qquad\qquad\qquad\qquad+\frac{\rho}{1-\frac{1}{\varsigma}}(1-\vartheta)U[(\frac{c}{((1-\vartheta)U)^{\frac{1}{1-\vartheta}}})^{1-\frac{1}{\varsigma}}-1] \big\}=0,\\
  U|_{t=T}=h(x).
\end{array}\right.
\end{equation}
The analysis above is summarized as below.
\begin{prop}
 HJBI equation \eqref{lizi} admits a unique viscosity solution with monotonic and continuous coefficients satisfying Hypothesis \ref{bsdexishu}, which are illustrated in Section \ref{sec4}.
\end{prop}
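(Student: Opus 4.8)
The plan is to recognize Example 5.3 as a concrete instance of the \emph{upper} game of Section \ref{sec3} and then to apply Theorems \ref{cunzai} and \ref{weiyi} in their upper-game form (legitimate by the symmetry pointed out after Remark \ref{daxiao}). First I would read off the data of the game from \eqref{wealth} and \eqref{epstein}: with state $x=V$, controls $u=(u^1,u^0,c)$ and $v=(v^0,v^1)$, the coefficients
$$b(\Xi,u,v)=\big(v^0(1-u^1)r_t+v^1u^1b_t\big)x-c,\qquad \sigma(\Xi,u,v)=\sigma_t v^1u^1x$$
are affine in $x$ with coefficients that are bounded and continuous on $[0,T]$ (since $r,b,\sigma$ are continuous deterministic functions and $u,v,c$ range over compact sets), hence Lipschitzian in $x$ uniformly in $(t,u,v)$ and continuous in $(t,u,v)$; this is Hypothesis \ref{sdexishu}, as already observed below \eqref{wealth}. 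The terminal function $h$ is Lipschitzian and deterministic by assumption, giving Hypothesis \ref{bsdexishu}(ii). The generator
$$g(\Xi,y,z,u,v)=\frac{\rho}{1-\tfrac1\varsigma}(1-\vartheta)y\Big[\big(\tfrac{c}{((1-\vartheta)y)^{1/(1-\vartheta)}}\big)^{1-\tfrac1\varsigma}-1\Big]$$
is independent of $z$ and of $x$, so we are exactly in the $z$-free setting of Subsection \ref{sub2} and the Lipschitz-in-$(x,z)$ part of Hypothesis \ref{bsdexishu}(i) is vacuous.

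The substantive step is to verify conditions (i), (iii), (iv) of Hypothesis \ref{bsdexishu} for this $g$ on the natural domain $\{(1-\vartheta)y>0\}$ on which the Epstein--Zin utility lives (extending $g$ off this set by its obvious continuation, which preserves the three conditions). Writing $\gamma:=1-\vartheta$ and $f:=1-\tfrac1\varsigma$, one rewrites $g=\tfrac{\rho}{f}c^{f}(\gamma y)^{1-f/\gamma}-\tfrac{\rho}{f}\gamma y$, so that (since $1-f/\gamma>0$ in both admissible regimes) $g(\cdot,0,\cdot,\cdot,\cdot)=0$, and (iv) reduces to the bound $|g|\le C(1+|y|^{\max(1,\,1-f/\gamma)})$, which is immediate once $c^{f}$ is bounded on the control range. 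For (iii) I would compute $\partial_y g=\tfrac{\rho}{f}\big(1-\tfrac f\gamma\big)\gamma\,c^{f}(\gamma y)^{-f/\gamma}-\tfrac{\rho}{f}\gamma$ and check that in regime (i) ($\vartheta>1,\varsigma>1$, so $\gamma<0<f$, $f/\gamma<0$) and in regime (ii) ($\vartheta<1,\varsigma<1$, so $f<0<\gamma$, $f/\gamma<0$) the first summand is $\le 0$ while the second is the constant $-\tfrac{\rho}{f}\gamma$; hence $\partial_y g\le\theta:=-\tfrac{\rho}{f}(1-\vartheta)$, and integrating this one-sided bound yields the monotonicity condition $(y-\widetilde y)(g(\cdot,y,\cdot,\cdot,\cdot)-g(\cdot,\widetilde y,\cdot,\cdot,\cdot))\le\theta|y-\widetilde y|^2$. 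Condition (i) — continuity in $(t,y,u,v)$ uniformly in $(x,z)$ — reduces, since $g$ has no $x$-, $z$- or explicit $t$-dependence and is smooth in $y$ on the invariant domain, to continuity of $c\mapsto c^{f}$ on the consumption set $[M,N]$: for $M>0$ this map is Lipschitz in both regimes, while for $M=0$ it is continuous on $[0,N]$ in regime (i) (where $f\in(0,1)$) but not even finite in regime (ii) (where $f<0$), which is precisely the restriction recorded just before the proposition.

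With Hypotheses \ref{sdexishu} and \ref{bsdexishu} verified for $(b,\sigma,g,h)$ — and noting that the polynomial growth in $y$ arising here is exactly what Hypothesis \ref{bsdexishu}(iv) is designed to accommodate, so that Proposition \ref{yilai} and the comparison theorem for BSDEs with monotone generators apply with no ad hoc modification — Theorem \ref{cunzai} in its upper-game form shows that $U$ is a viscosity solution of $\partial_tU+H^{+}(\Xi,U,DU,D^2U)=0$ with terminal datum $h$, which after inserting the explicit coefficients is the HJBI equation \eqref{lizi}, and Theorem \ref{weiyi} in its upper-game form gives uniqueness in $\Gamma$. I expect the main obstacle to be precisely the case analysis of the second paragraph: tracking the sign of $\partial_y g$ and the growth exponent $1-f/\gamma$ on the invariant domain across the two Kraft et al.\ regimes, together with the $M=0$ versus $M>0$ dichotomy for admissibility of $c\mapsto c^{f}$; the remaining identification of the game data with the abstract framework of Section \ref{sec3} is routine.
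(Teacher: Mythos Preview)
Your proposal is correct and follows essentially the same route as the paper: the proposition is stated as a summary of the preceding discussion in Example~5.3, which amounts to checking that the wealth dynamics \eqref{wealth} satisfy Hypothesis~\ref{sdexishu}, that the Epstein--Zin generator satisfies Hypothesis~\ref{bsdexishu} in the two parameter regimes (with the $M=0$ versus $M>0$ dichotomy for continuity in $c$), and then invoking Theorems~\ref{cunzai} and~\ref{weiyi} in their upper-game form. You supply more explicit computation of $\partial_y g$ and the growth exponent $1-f/\gamma$ than the paper, which simply cites \cite{Kraft2013} for the monotonicity and polynomial-growth verification, but the logical structure is identical.
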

\section{Conclusion}
In this research, we study the DPP of two-player zero-sum recursive SDGs, based on which we prove the unique solvability of the derived HJBI equations in the viscosity sense employing mollification and truncation techniques  as well as stability property and comparison theorem of viscosity solution.
In future, we are interested in  working out the case in which  $g$ depends on $Z$, and  weak solutions of \eqref{hjbiz} and \eqref{upperhjbiz} in  Sobolev sense as \cite{Wei2014}. Moreover, the subsequent verification theorem via viscosity solution is also worth exploring for the characterization of optimal control-strategy pairs.
\section*{ Declaration of interest statement}The authors declare that they have no  competing financial interests or personal relationships to influence the study in this paper.
\section*{Acknowledgement}
The authors would like to express their appreciation to  two anonymous reviewers for their valuable comments and advices, which help improve this paper greatly.
%\section*{Appendix}
%\setcounter{equation}{0}
%\renewcommand\theequation{A.\arabic{equation}}
\begin{comment}
\begin{spacing}{1.0}
\section*{\small Acknowledgements}
{\small This work is supported by National Natural Science Foundation of China (11501532 and 11301530)
        and the Natural Science Foundation of Shandong Provence (ZR2015AQ004).}
\end{spacing}
\section*{Compliance with Ethical Standards}
The authors declare that they have no conflict of interest.
\end{comment}
%\begin{acknowledgements}
%This work is supported by National Natural Science Foundation of China (11501532 and 11301530)
        %and the Natural Science Foundation of Shandong Provence (ZR2015AQ004).\end{acknowledgements}
\section*{References}

%\bibliography{MyCollection5}
\end{document}